\nonstopmode\numberwithin{equation}{section}
\newtheorem{Lemma}{Lemma}[section]
\newtheorem{Theorem}[Lemma]{Theorem}
\newtheorem{theorem}{Theorem}
\newtheorem{Proposition}[Lemma]{Proposition}
\renewcommand{\qed}{\rule{6pt}{6pt}}
\newenvironment{Proof}{\noindent{\bf Proof:}}{\qed\bigskip}
 \newcommand{\bea}{\begin{eqnarray}}
 \newcommand{\eea}{\end{eqnarray}}
\begin{document}
\title{\LARGE Existence of isolated periodic waves of a family of PDEs with cubic reaction term}
\author{ \Large{
Krishna Patra, 
Ch. Srinivasa Rao}\\
Department of Mathematics, Indian Institute of Technology Madras\\
Chennai 600036, India}
\date{}
\maketitle
\begin{center}{\bf{Abstract}}
\end{center}
In this paper we study the isolated periodic traveling wave solutions of a family of reaction-convection-diffusion equations with cubic reaction term. Existence/nonexistence of periodic traveling wave solutions are discussed in different parametric ranges. The monotonicity of the ratio of Abelian integrals is used to prove the existence of at most one limit cycle. Finally, numerical study is presented at the end.
\vskip 0.1in
\noindent \textit{Keywords:}
Isolated periodic wave, Hamiltonian system, Limit cycle, Abelian integral.
\vskip 0.1in  
\noindent \textit{Mathematics Subject Classiﬁcation (2020):}
34C05, 34C07, 34C08, 35C07, 37G15
\section{Introduction}
In recent times, reaction-convection-diffusion equations have become prevalent across various scientific and engineering disciplines especially within fluid dynamics, heat transfer and population ecology.
Petrovskii and Li \cite{{ref2}} considered the following model
\begin{equation} \label{kp3}
 \frac{\partial u(x,t)}{\partial t}+(a_{0}+2a_{1}u)\frac{\partial u}{\partial x}=\tilde {D}\frac{\partial^2 u}{\partial x^2} + u(u-u_0)(k-u), 
\end{equation}
where $a_0,a_1,u_0,k,\tilde{D}$ are real numbers. This model (\ref{kp3}) has significant application in Ecology ( see more details in \cite{ref1}). Later Sun et al. \cite{ref3} proved that the model (\ref{kp3}) has at most one periodic traveling wave solution. 
Inspired by the work of Petrovskii and Li \cite{{ref2}} and Sun et al. \cite{ref3}, we consider a generalization of (\ref{kp3}):
\begin{equation} \label{eq3.01}
\frac{\partial u}{\partial t}+(a_{0}+a_{n}u^n)\frac{\partial u}{\partial x}=D\frac{\partial^2 u}{\partial x^2}+\tilde \beta u(u-u_{0})(k-u),
\end{equation}
where $\tilde \beta,$ $ D $ are positive real numbers, $n$ is a positive integer greater than $1$, $a_0$, $a_n$, $u_0$ and $k$ are real numbers. This generalized model may be useful for ecologists.

Wang et al. \cite{ref4} considered the model 
\begin{equation} \label{eq2.01}
\frac{\partial u}{\partial t}+(a_{0}+2a_{1}u+3a_{2}u^2)\frac{\partial u}{\partial x}=D\frac{\partial^2 u}{\partial x^2}+\beta u(u-u_{0})(k-u),
\end{equation}
where $\beta, D $ are positive real numbers, and  $a_{0},$  $a_{1}$ and $a_2$ are real numbers. They proved that the equation (\ref{eq2.01}) has two isolated periodic traveling wave solutions simultaneously.

Recently Patra and Rao \cite{ref8} studied the model
\begin{equation} \label{eqn1.9}
\frac{\partial u}{\partial t}=D\frac{\partial^2 u}{\partial x^2}-(a_{0}+a_{1}(p+1)u^p)\frac{\partial u}{\partial x}+\tilde\beta u(u^2-1)(u^2+\gamma),
\end{equation}
 where $\gamma ,$ $D$ are positive real numbers, $p \in \mathbb{Z}^+$, $a_0$ and $a_1$ are real numbers and $\tilde\beta$ is a nonzero real number. They determined conditions on parameters such that the equation has at most one isolated periodic traveling wave solution. 
 
 Wang et. al.\cite{ref17} studied the partial differential equation
\begin{equation}\label{eqn1.10}
  \frac{\partial u}{\partial t}+\left(\alpha_0+\sum_{i=1}^n\alpha_iu^i\right)\frac{\partial u}{\partial x}=\frac{\partial^2 u}{\partial x^2}+\beta u(1- u^n)(u^n-\gamma),   
\end{equation}
where $\alpha_i,\beta,\gamma$ are real numbers and $n$ is a positive integer. They have proved that the equation (\ref{eqn1.10}) has multiple isolated periodic waves for certain parameters.

Liu and Han \cite{{ref9}} studied existence of periodic travelling wave solutions of a generalized Burgers-Fisher equation
\begin{equation}{\label{kp100}}
 \frac{\partial u}{\partial t}+\alpha u^m\frac{\partial u}{\partial x}+\beta \frac{\partial^2 u}{\partial x^2}+\gamma u(1-u^m)=0,    
\end{equation}
where $m$ is a positive integer, and $\alpha,\beta$, and $\gamma$ are real numbers. They proved that :
(i) for odd positive integer $m,$ there exists either an isolated periodic wave solution or a solitary wave solution  under some constraints.\\
(ii) for even positive integer $m,$ there exists either an isolated periodic wave solution or a kink or antikink wave solution under some constraints.\\
Patra and Rao \cite{ref5} studied the partial differential equation
\begin{equation}
\frac{\partial u}{\partial t}+\alpha u^m \frac{\partial u}{\partial x}+\beta \frac{\partial^2 u}{\partial x^2}+\delta u^l(1-u^n)=0.
\label{kpr01}
\end{equation}
where $ \alpha,\delta \in \mathbb{R},$ $\beta \neq 0,$ and $m,$ $l$ and $n$ are positive integers. They examined that the periodic wave solution of equation (\ref{kpr01}) for different cases. For $m=1,$ Zhang and Xia \cite{ref11} proved existence of periodic wave solution of equation (\ref{kpr01}) for some values of $l$ and $n.$ \\ 
Wei  and Chen \cite{ref16} considered the reaction–convection–diffusion equation
\begin{equation} \label{eq1.11}
\frac{\partial u}{\partial t}=\frac{\partial^2 u}{\partial x^2}+(a_{0}+a_{1}u)\frac{\partial u}{\partial x}+\delta_{1} u(u^q+\delta_{2} )(u^q+\beta),
\end{equation}
where $\delta_{1,2}= \pm 1,$ $\beta \neq 0,$  $0< | a_{1}|\ll 1,$ and $q \in \mathbb{Z}^+.$ When $q=3,4$, a comprehensive analysis was conducted to determine the conditions for the existence and number of near-ordinary periodic wave solutions, addressing both monotone and non-monotone cases of the Abelian integral ratio. Later, Wei et. al. \cite{ref10} studied the reaction–convection–diffusion equation
\begin{equation} \label{eq1.12}
\frac{\partial u}{\partial t}=\frac{\partial^2 u}{\partial x^2}+(a_{0}+a_{1}u^n)\frac{\partial u}{\partial x}+\delta_{1} u(u^q-1 )(u^q+\beta),
\end{equation}
where $\beta \neq 0,$  $0< | a_{1}|\ll 1,$ and $q,n \in \mathbb{Z}^+.$ They demonstrated that \rm(i) for any $q\in \mathbb{Z}^+$, there is at most one reachable near-ordinary periodic wave solution of (\ref{eq1.12}) when $\delta_{1}=1,$ $n=1$, $\beta> 0 ,$ \rm(ii) for any $n>1,$ there is at most one reachable near-ordinary periodic wave solution of (\ref{eq1.12}) when $\delta_{1} = -1$, $q=1$, $\beta=-2.$

In this paper we obtain the conditions on the parameters $k$, $u_0$, $a_0$, and $a_n$ such  that the model has an isolated periodic wave solution. As Sun et. al. \cite{ref3} proved the existence of periodic traveling wave solution of (\ref{eq3.01}) for $n=1,$ we study for integer  $n>1.$ We  mainly focus on four cases \rm(i) $0<u_0<k \leq 2u_0,$ \rm(ii)$-k<u_0<0,$ \rm(iii) $u_0<-k,$ and \rm(iv) $u_0=k>0$ and $u_0=-k<0.$
Let 
\begin{equation}\label{kpr2.2}
u(x,t) =u(\eta),\hspace{1cm} \eta = x-ct.
\end{equation}
Here $c$ is the wave speed. 
Now substituting (\ref{kpr2.2}) into (\ref{eq3.01}), we get
\begin{equation}\label{kpr2.3}
 D\frac{d^2u}{d\eta^2}=(-c+a_0+a_n u^n)\frac{du}{d\eta}-\tilde\beta u(u-u_0)(k-u).
\end{equation}
Rewriting the equation (\ref{kpr2.3}) as a system of first order ordinary differential equations, we have 
\begin{equation}\label{kpr2.4}
 \frac{du}{d\eta}=y,\hspace{.5cm}
 \frac{dy}{d\eta}=\frac{1}{D}(-c+a_0+a_n u^n)y-\frac{\tilde\beta}{D} u(u-u_0)(k-u).
\end{equation}
Assume that $\frac{\tilde\beta}{D}=\beta,$ and $\frac{a_0-c}{D}= \alpha_0\epsilon,$ and $ \frac{a_n}{D} = \alpha_n\epsilon,$
where $0 <\epsilon<<1~\text{and}~\alpha_0$ and $ \alpha_n$ are (real) parameters. Then  we have the system of differential equations:
\begin{equation}\label{kpr2.5}
\frac{du}{d\eta}=y,\hspace{.5cm}
 \frac{dy}{d\eta}=\epsilon(\alpha_0+\alpha_nu^n)y-\beta u(u-u_0)(k-u).
\end{equation}
Now the corresponding unperturbed system [Equation (\ref{kpr2.5}) with  $\epsilon= 0$] is given by 
\begin{equation}\label{kpr1.17}
\frac{du}{d\eta}=y,\hspace{.5cm}
 \frac{dy}{d\eta}=-\beta u(u-u_0)(k-u).
\end{equation}
The system (\ref{kpr1.17}) is clearly a Hamiltonian system \cite{ref20} with Hamiltonian function $$H(u,y):=\frac{y^2}{2}+\beta\left(-\frac{u^4}{4}+\frac{(k+u_0)}{3}u^3-\frac{ku_0}{2} u^2\right):= \Psi(y)+\Phi(u).$$ 
The following two Theorems are used to classify the nature of the fixed points of the system (\ref{kpr1.17}).
\begin{theorem}(Andronov et al.\cite{ref02}
, Perko\cite{ref15}){\label{kp.r1}}
Let us consider the system of differential equations
\begin{equation}
 \frac{du}{d\eta}=y,\hspace{.5cm}
 \frac{dy}{d\eta}=\hat{\lambda} (u-u_0)^k(1+h(u)),
  \label{a01} 
\end{equation}

where $h(u)$ is an analytic function in a neighborhood of $u=u_0$, $h(u_0)=0,$ $k \geq 2,$ and $\hat{\lambda} \neq 0.$ Then 
\begin{itemize}
    \item [\rm(i)] $(u_0, 0)$ is a cusp if $k$ is even;
    \item [\rm(ii)] $(u_0, 0)$ is a saddle if $k$ is odd and $\hat \lambda>0$;
    \item [\rm(iii)] $(u_0, 0)$ is a center or focus if $k$ is odd and $ \hat \lambda<0$;
\end{itemize}
\end{theorem}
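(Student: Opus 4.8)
The plan is to exploit the fact that the system (\ref{a01}) is Hamiltonian, which sidesteps the failure of linearization at the degenerate equilibrium. First I would verify that because $k \geq 2$, the Jacobian of (\ref{a01}) at $(u_0,0)$ is the nilpotent matrix $\bigl(\begin{smallmatrix} 0 & 1 \\ 0 & 0 \end{smallmatrix}\bigr)$, so the equilibrium is nonhyperbolic and its type cannot be read off from the linear part. Since the right-hand side of the $y$-equation depends on $u$ alone, the system possesses the first integral $H(u,y) = \tfrac{y^2}{2} + G(u)$ with $G'(u) = -\hat\lambda(u-u_0)^k(1+h(u))$; integrating and using $h(u_0)=0$ yields the local expansion $G(u) = G(u_0) - \dfrac{\hat\lambda}{k+1}(u-u_0)^{k+1} + O\!\big((u-u_0)^{k+2}\big)$.

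The key reduction is that the local phase portrait of a Hamiltonian system of the form $\tfrac{y^2}{2}+G(u)$ is governed entirely by the behaviour of the potential $G$ at the critical point $u_0$: a strict local minimum produces nested closed level curves (a center), a strict local maximum produces a crossing pair of separatrices (a saddle), and a monotone inflection produces a cusp. I would therefore classify $u_0$ as a critical point of $G$ by inspecting the sign of $G'(u) = -\hat\lambda(u-u_0)^k(1+h(u))$ near $u_0$, where $1+h(u)>0$ locally. When $k$ is even, $(u-u_0)^k \geq 0$, so $G'$ keeps a constant sign on both sides of $u_0$ and $G$ is monotone through $u_0$, an inflection giving case \rm(i). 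When $k$ is odd, $(u-u_0)^k$ changes sign, so $G'$ changes sign at $u_0$: if $\hat\lambda>0$ then $G$ increases to the left and decreases to the right of $u_0$, a strict maximum yielding the saddle of case \rm(ii), while if $\hat\lambda<0$ the inequalities reverse to a strict minimum, giving case \rm(iii).

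The step I expect to require the most care is establishing the cusp in case \rm(i) rigorously, since an inflection of the potential must be translated into the correct topological picture. Here I would examine the critical level set $H = H(u_0,0)$ directly: near $u_0$ it reads $y^2 = \tfrac{2\hat\lambda}{k+1}(u-u_0)^{k+1}(1+\cdots)$, so on the side where the right-hand side is nonnegative one obtains two branches $y \sim \pm C\,|u-u_0|^{(k+1)/2}$ meeting the $u$-axis tangentially (the half-integer exponent $(k+1)/2 \geq 3/2$ being decisive), which is precisely the cusp configuration. Making this fully precise, and in the general setting separating the center from the focus alternative in case \rm(iii), is what the quoted blow-up (desingularization) arguments of Andronov et al. and Perko supply; for the present Hamiltonian system the level curves are closed, so case \rm(iii) is in fact always a center. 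I would then invoke that classification to close each of the three cases.
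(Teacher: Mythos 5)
You should first note that the paper itself offers no proof of this statement: it is quoted as a known classification of nilpotent equilibria, with the proof delegated to Andronov et al.\ and Perko, where it is established for the more general system $\dot u = y$, $\dot y = \hat\lambda (u-u_0)^k(1+h(u)) + b(u-u_0)^n y (1+g(u)) + y^2R(u,y)$ by desingularization (blow-up) and normal-form techniques; in that generality the focus alternative in \rm(iii) is genuinely possible, which is why the statement reads ``center or focus''. Your route is different and legitimate: you exploit the special structure of (\ref{a01}) --- the second equation is independent of $y$ --- to write the system as a conservative one with energy $H(u,y)=\tfrac{y^2}{2}+G(u)$, $G'(u)=-\hat\lambda(u-u_0)^k(1+h(u))$, and you read the three cases off the potential: a monotone inflection ($k$ even) yields the cusp through the critical level set $y^2=2\big(G(u_0)-G(u)\big)\sim \tfrac{2\hat\lambda}{k+1}(u-u_0)^{k+1}$ with half-integer exponent $(k+1)/2\ge 3/2$; a strict maximum ($k$ odd, $\hat\lambda>0$) yields a degenerate saddle whose separatrices $y\sim\pm C(u-u_0)^{(k+1)/2}$ are tangent to the $u$-axis (note $k\ge 3$ here, so this is not a hyperbolic saddle); a strict minimum yields closed level curves. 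This is more elementary and self-contained than the cited blow-up machinery, and it even sharpens \rm(iii) in the present setting: since $H$ has a strict local minimum at $(u_0,0)$, the point is a center, never a focus --- precisely the conclusion the paper extracts separately via Theorem \ref{kp.r2} (Hale), so your observation is consistent with how the paper actually uses the result. Two points deserve explicit care if you flesh this out: fix a neighborhood on which $1+h(u)>0$ so that the sign analysis of $G'$ is valid, and in the saddle and cusp cases do not stop at the shape of the critical level set --- you must also track the nearby level sets $H=h$ together with the flow direction $\dot u = y$ to conclude that the equilibrium has four (respectively two) hyperbolic sectors, which is exactly the step the references make rigorous and which your sketch correctly identifies as the delicate one. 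What the general approach of Andronov--Perko buys in exchange is robustness: it covers perturbations containing $y$-dependent terms, for which the first integral disappears and the center/focus dichotomy in \rm(iii) becomes a genuine question.
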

\begin{theorem}(Hale \cite{ref24}){\label{kp.r2}}
Suppose $\textbf{f}:\mathbb{R}^2\to \mathbb{R}^2$ is a continuous function and for any $\textbf{x}_0 \in \mathbb{R}^2$ the system 
 \begin{equation} 
 \dot{\textbf{x}}=\textbf{f}(\textbf{x}),
\label{a1}
\end{equation} 
has a unique solution $\textbf{x}=\textbf{x}(t,\textbf{x}_0)$ with $\textbf{x}(0,\textbf{x}_0)=\textbf{x}_0.$ Assume that the system (\ref{a1}) is conservative and $\textbf{a}\in \mathbb{R}^2$ is an equilibrium point of the system (\ref{a1}).\\
If $E(\textbf{x})$ is an integral of (\ref{a1}) in a bounded open neighborhood $U\subset \mathbb{R}^2$ of the equilibrium point  $\textbf{a}$ such that $E(\textbf{x})> E(\textbf{a})$ for  $\textbf{x} \in U\setminus{\{\textbf{a}\}},$ then the equilibrium point  $\textbf{a}$ is a center.
\end{theorem}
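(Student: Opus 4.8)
The plan is to exploit that $E$, being an integral of the flow, is constant along every trajectory, so each orbit is confined to a single level set $\{E=c\}$; the strict minimality of $E$ at $\mathbf{a}$ then forces the level sets immediately surrounding $\mathbf{a}$ to be closed curves encircling it, and an orbit trapped on such a curve must be periodic. Concretely, I would first fix a closed disk $\bar{D}\subset U$ centred at $\mathbf{a}$ and set $m=\min_{\mathbf{x}\in\partial D}E(\mathbf{x})$; by the hypothesis $E(\mathbf{x})>E(\mathbf{a})$ on $U\setminus\{\mathbf{a}\}$ together with compactness of $\partial D$ we get $m>E(\mathbf{a})$. For each level $c$ with $E(\mathbf{a})<c<m$, the connected component $K_c$ of $\{\mathbf{x}\in\bar{D}:E(\mathbf{x})\leq c\}$ containing $\mathbf{a}$ is compact and lies in the interior of $D$, since no point of $\partial D$ has $E$-value below $m$.

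Next I would record the two dynamical consequences of $\frac{d}{dt}E(\mathbf{x}(t,\mathbf{x}_0))=0$. First, each $K_c$ is positively and negatively invariant: a trajectory starting in $K_c$ keeps its $E$-value $\leq c$, so it cannot reach $\partial D$ (where $E\geq m>c$) and, being connected, cannot leave the component; in particular every such solution stays in the compact set $\bar{D}$, is therefore defined for all $t\in\mathbb{R}$, and $\mathbf{a}$ is Lyapunov stable. Second, the orbit through any $\mathbf{x}_0\in K_c\setminus\{\mathbf{a}\}$ satisfies $E\equiv E(\mathbf{x}_0)>E(\mathbf{a})$, so by continuity of $E$ the orbit and its closure stay a fixed positive distance away from $\mathbf{a}$; thus $\mathbf{a}$ lies neither on the orbit nor in its $\alpha$- or $\omega$-limit set.

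I would then conclude periodicity by a Poincar\'e--Bendixson argument. Shrinking $D$ if necessary so that $\mathbf{a}$ is the only equilibrium in $\bar{D}$, the orbit of $\mathbf{x}_0\in K_c\setminus\{\mathbf{a}\}$ is bounded and its limit sets are nonempty, compact, connected, invariant, and free of equilibria; the Poincar\'e--Bendixson theorem then identifies $\omega$ and $\alpha$ with periodic orbits contained in the single level set $\{E=E(\mathbf{x}_0)\}$. Because the entire orbit is pinned to this one level set, it cannot spiral toward a distinct limit cycle and must coincide with the periodic orbit through $\mathbf{x}_0$; equivalently, on the compact level curve the field $\mathbf{f}$ is bounded away from $\mathbf{0}$, so the solution traverses the curve with nonvanishing speed and returns to $\mathbf{x}_0$ in finite time. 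Hence every orbit in the punctured neighbourhood $\bigcup_{E(\mathbf{a})<c<m}\big(K_c\setminus\{\mathbf{a}\}\big)$ is a closed orbit encircling $\mathbf{a}$, which is exactly the assertion that $\mathbf{a}$ is a center.

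The main obstacle is the topological step asserting that the level sets of the merely continuous (or $C^1$) integral $E$ near the strict minimum are genuine simple closed curves encircling $\mathbf{a}$, together with the exclusion of spiralling orbits and of limit sets that are homoclinic or heteroclinic graphics. Both difficulties are controlled once $\mathbf{a}$ is arranged to be the unique equilibrium in $\bar{D}$, so that no equilibria are available to sit in the limit sets; this isolation is the one extra structural input I would want to justify explicitly in the setting at hand. If $E$ were additionally known to be smooth with a nondegenerate minimum, one could bypass this entirely and invoke the Morse lemma to display the surrounding level curves as circles and read off the periodicity directly.
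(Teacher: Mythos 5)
The paper offers no proof of this statement---it is quoted verbatim from Hale's book---so your proposal has to be judged on its own merits. Its skeleton is the standard argument and much of it is correct: the sublevel-component trapping $K_c$, Lyapunov stability of $\mathbf{a}$, the observation that $E$-values exclude $\mathbf{a}$ from the $\alpha$- and $\omega$-limit sets of nearby orbits, and Poincar\'e--Bendixson producing periodic limit sets are all sound, and you are also right to flag isolatedness of $\mathbf{a}$: it genuinely cannot be derived from the quoted hypotheses ($\mathbf{f}\equiv 0$ with $E(\mathbf{x})=|\mathbf{x}-\mathbf{a}|^2$ satisfies everything stated, yet $\mathbf{a}$ is not a center), so it is an omitted hypothesis of Hale's original theorem; in this paper it is harmless because the candidate centers have Jacobian with nonzero determinant.

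The genuine gap is the final step, where you exclude spiralling because the orbit is ``pinned to one level set'' and therefore ``must coincide with the periodic orbit through $\mathbf{x}_0$.'' This does not follow: the limit cycle $\Gamma=\omega(\mathbf{x}_0)$ lies in the \emph{same} level set $\{E=E(\mathbf{x}_0)\}$ (by continuity of $E$ on the orbit closure), so level-set pinning is perfectly compatible with spiralling, and for a merely continuous integral the level sets need not be curves---they may contain open annuli. Contrary to your closing claim, making $\mathbf{a}$ the unique equilibrium in $\bar{D}$ does \emph{not} control this difficulty. Concretely, in polar coordinates take $\dot r=\operatorname{dist}\bigl(r,\,C\cup\{0\}\bigr)^2$, $\dot\theta=1$, where $C\subset(0,1]$ is a Cantor set accumulating at $0$, and let $E=F(|\mathbf{x}|)$ with $F$ the distribution function of a measure of full support on $C$: the origin is an isolated equilibrium and a strict minimum of the continuous integral $E$, yet every gap of $C$ carries non-periodic spiralling orbits, so the origin is not a center. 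What rescues the theorem is the part of the definition of an integral (as in Hale) that your argument never invokes: $E$ must not be constant on any open set. With that assumption, spiralling is excluded by the return-map argument you would need here: if the orbit spirals onto $\Gamma$, take a transversal $\sigma$ at $p\in\Gamma$ with Poincar\'e map $P$ on the approach side; then $E(q)=E(P^{n}(q))\to E(p)$ for every $q\in\sigma$ near $p$, so $E$ is constant on a one-sided segment of $\sigma$ and hence on the open one-sided annulus swept out by the orbits through it---a contradiction. Your fallback via the Morse lemma assumes a smoothness and nondegeneracy of $E$ that are not among the hypotheses, so it does not close this gap either.
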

In the first instance, we find all the centers of the unperturbed system (\ref{kpr1.17}). For each center of the unperturbed system (\ref{kpr1.17}), we identify the periodic annulus. Note that a limit cycle of the system (\ref{kpr2.5}) corresponds to an isolated periodic traveling wave of the equation (\ref{eq3.01}). Therefore it is enough to study the existence of limit cycle of the perturbed Hamiltonian system.

Let us consider a perturbed Hamiltonian system $X_{H,\epsilon}$:
\begin{align}
 \dot {x}= H_{y}(x,y)+\epsilon f(x,y),\hspace{.5cm}
 \dot {y}=-H_{x}(x,y)+\epsilon g(x,y)
  \label{kp1req2.7}  
\end{align}
where $\epsilon>0$ is sufficiently small, $f(x,y)$ and $g(x,y)$ are polynomial functions in $x$ and $y,$ and  $\epsilon=0$ gives the corresponding Hamiltonian system.
Abelian integral is an important tool to determine the number of limit cycles of the perturbed system. Now the Abelian integral corresponding to the perturbed system (\ref{kp1req2.7}) is
\begin{align}
   & A(h)=\oint_{\Gamma_ h}\,g(x,y)\, dx-f(x,y)\, dy.
   \label{eqn1.17}
\end{align}
See Zoladek and Murilo \cite{ref21}, Patra and Rao \cite{ref8} for more detailed explanations. Here $\Gamma_h$ is a closed trajectory of the unperturbed system (\ref{kpr1.17}) and  $\Gamma_h: H(u,y)=h$ for some $h \in (p_1,p_2)$,  where $(p_1,p_2)$ is the maximal interval so that the family $\{\Gamma_h:H(u,y)=h,h \in (p_1,p_2)\}$ of trajectories forms a periodic annulus. The Poincar\'{e}-Pontryagin theorem (see Poincar\'{e} \cite{ref31} and Pontryagin \cite{ref32}) and Theorem 2.4 of \cite{ref30} indicate that if $A(h)$ is not identically zero, the number of zeros of $A(h)$ provides an upper bound on the number of limit cycles of the system (\ref{kp1req2.7}) that bifurcate from the periodic annulus.

The Abelian integral corresponding to the perturbed system (\ref{kpr2.5}) is 
\begin{eqnarray}
 A(h) &=& \oint_{\Gamma_ h}\,(\alpha_0+\alpha_nu^n)y\, du\nonumber\\
 &=&\alpha_0 \left(\oint_{\Gamma_ h}\,y\, du\right) +\alpha_n \left(\oint_{\Gamma_ h}\,u^n y\, du\right)\nonumber\\
&:=& \alpha_0 A_0(h)+\alpha_n A_n(h).
\end{eqnarray}
\textbf{Note:} Now $ |A_0(h)|=|\oint_{\Gamma_h}\,y\, du|=$ Area of the region enclosed by the closed curve $\Gamma_h$ due to Green's Theorem in the plane. So $A_0(h) \neq 0$ and hence the  ratio $G_{n}(h):=\frac{A_n(h)}{A_0(h)}$ is well defined.

The next two Theorems are used to prove the existence of isolated periodic waves.

\begin{theorem}(Christopher and Li \cite{ref30})\label{lemma3.1}
Assume that $A(h) \not\equiv0$ for $h\in(p_1,p_2)$. Then the following statements hold.\\
\rm{(i)} If a limit cycle of the system $X_{H,\epsilon}$ bifurcates from $\Gamma_{\hat h}$, then $A(\hat h)=0.$\\
\rm{(ii)} If there exists  $\tilde h\in(p_1,p_2)$ such that $A(\tilde h)=0$ and $A'(\tilde h)\neq0$. Then the system $X_{H,\epsilon}$ has a unique limit cycle that bifurcates from $\Gamma_{\tilde h}.$
\end{theorem}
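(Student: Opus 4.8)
The plan is to prove both statements through the displacement (Poincar\'{e} return) map of the perturbed flow $X_{H,\epsilon}$, following the classical Poincar\'{e}--Pontryagin argument. First I would fix a compact subinterval $[a,b] \subset (p_1,p_2)$ and choose an analytic cross-section $\Sigma$ transversal to every orbit $\Gamma_h$ for $h \in [a,b]$, parametrising the points of $\Sigma$ by the energy level $h = H(x,y)$. For a point of $\Sigma$ at level $h$ I would follow the trajectory of $X_{H,\epsilon}$ until its first return to $\Sigma$ and record the change of energy, defining the displacement function $d(h,\epsilon) = H(\text{first return}) - h$. By smooth dependence of ODE solutions on initial data and parameters, $d$ is smooth in $(h,\epsilon)$ on $[a,b] \times (-\epsilon_0,\epsilon_0)$ and satisfies $d(h,0) \equiv 0$; moreover a limit cycle of $X_{H,\epsilon}$ meeting $\Sigma$ near level $h$ corresponds precisely to a zero of $d(\cdot,\epsilon)$.

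The key computation is the first-order expansion of $d$ in $\epsilon$. Along any trajectory of $X_{H,\epsilon}$ one has $\dot H = H_x \dot x + H_y \dot y = \epsilon(H_x f + H_y g)$, so integrating over one near-return and replacing the perturbed orbit by the unperturbed orbit $\Gamma_h$ to leading order gives
\[
d(h,\epsilon) = \epsilon \oint_{\Gamma_h} (H_x f + H_y g)\, d\eta + O(\epsilon^2).
\]
Using the unperturbed relations $dx = H_y\, d\eta$ and $dy = -H_x\, d\eta$ valid on $\Gamma_h$, the leading integral equals $\oint_{\Gamma_h}(g\, dx - f\, dy) = A(h)$, so that $d(h,\epsilon) = \epsilon A(h) + \epsilon^2 R(h,\epsilon)$ with $R$ smooth and bounded on $[a,b] \times (-\epsilon_0,\epsilon_0)$.

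Granted this expansion, both parts follow quickly. For (i), if limit cycles bifurcate from $\Gamma_{\hat h}$ there are zeros $h(\epsilon) \to \hat h$ of $d(\cdot,\epsilon)$ along a sequence $\epsilon \to 0^+$; dividing $d(h(\epsilon),\epsilon) = 0$ by $\epsilon$ gives $A(h(\epsilon)) + \epsilon R(h(\epsilon),\epsilon) = 0$, and letting $\epsilon \to 0$ yields $A(\hat h) = 0$ by continuity. For (ii), I would pass to the reduced displacement $\bar d(h,\epsilon) := d(h,\epsilon)/\epsilon = A(h) + \epsilon R(h,\epsilon)$, which extends smoothly to $\epsilon = 0$ with $\bar d(\tilde h, 0) = A(\tilde h) = 0$ and $\partial_h \bar d(\tilde h, 0) = A'(\tilde h) \neq 0$. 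The Implicit Function Theorem then yields a unique smooth branch $h = h(\epsilon)$ with $h(0) = \tilde h$ solving $\bar d = 0$ for all small $\epsilon$; this branch gives a single periodic orbit bifurcating from $\Gamma_{\tilde h}$, and the condition $\partial_h \bar d \neq 0$ forces it to be a simple, hence isolated and locally unique, limit cycle.

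The main obstacle is the rigorous justification of the expansion $d(h,\epsilon) = \epsilon A(h) + O(\epsilon^2)$ with a remainder uniform in $h$ on compact subintervals. This requires constructing the Poincar\'{e} map carefully and controlling the deviation of the perturbed orbit from $\Gamma_h$ over a full period uniformly in $h$; the period stays bounded on $[a,b]$ but may blow up as $h \to p_1^+$ or $h \to p_2^-$, which is exactly why one restricts attention to compact subintervals of the annulus. Once this analytic foundation and the $C^1$-dependence of the flow on $\epsilon$ are in place, parts (i) and (ii) are immediate consequences of continuity and the Implicit Function Theorem, respectively.
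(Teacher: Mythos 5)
Your proposal is correct, but there is nothing in the paper to compare it against: the paper does not prove this statement, quoting it as a known background result of Christopher and Li \cite{ref30} (alongside the Poincar\'{e}--Pontryagin theorem), and using it only as a tool. The argument you give --- an energy-parametrized return map on a transversal section, the identity $\dot H=\epsilon(H_xf+H_yg)$ along perturbed orbits, the expansion $d(h,\epsilon)=\epsilon A(h)+\epsilon^2R(h,\epsilon)$ with $R$ uniform on compact subintervals of $(p_1,p_2)$, continuity for part (i), and the Implicit Function Theorem applied to $\bar d(h,\epsilon)=A(h)+\epsilon R(h,\epsilon)$ for part (ii) --- is precisely the classical Poincar\'{e}--Pontryagin proof found in the cited reference, and you correctly flag the one genuinely technical point, namely uniformity of the remainder, which is why one works on compact subintervals where periods of $\Gamma_h$ stay bounded. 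Two details are worth stating explicitly to make the write-up airtight: zeros of the displacement function correspond to periodic orbits, not automatically to limit cycles, so in (ii) you need the simplicity of the zero ($\partial_h\bar d(\tilde h,0)=A'(\tilde h)\neq 0$, hence $\partial_h d\neq 0$ for small $\epsilon>0$) to conclude hyperbolicity and thus that the orbit is an isolated periodic orbit --- which your final sentence does supply; and in (i) the implication runs only from limit cycles to zeros of $A$, which is the direction you use. With those observations your blind proof is a complete and standard proof of the theorem.
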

\begin{theorem}{(Liu et al. \cite{ref6})}{\label {thmc}}
Consider the system of differential equations
\begin{equation}{\label {eqn1.13}}
\frac{du}{d\eta}=y,\hspace{.5cm}
 \frac{dy}{d\eta}=\epsilon(\alpha_0 +\alpha_n \tilde g_n(u))y- \Phi'(u), 
\end{equation}
where $\epsilon>0$ is sufficiently small, $\tilde g_n(u)$ is a polynomial, and $\alpha_0,\,\alpha_m$ are real numbers. \\
Suppose that: 
\begin{itemize}
\item[(a)] The Hamiltonian function $H(u,y)$ corresponding to the unperturbed system of $(\ref{eqn1.13})$ has the form $H(u,y) := \Psi(y)+\Phi(u)$ and satisfies $\Phi'(u)(u-\hat a)>0\,\, (\,\, { or} < 0\,\,)$ { for} $u\in(\alpha,B)\setminus \{\hat a\}$ where 
$(\hat a,0)$ is a center of the unperturbed system of $(\ref{eqn1.13})$ and $B$ can be determined by the relation $\Phi(\alpha)=\Phi(B).$\\
(Note that, in view of the condition $(a)$, there exists an involution $\delta$ defined on $(\alpha,B)).$
\item[(b)]
$\hat{A}_{n}(h):=\frac{A_n(h)}{A_0(h)}=\frac{\oint_{\Gamma_h}\,\tilde g_n(u)y\, du}{\oint_{\Gamma_h}\,y\, du}$ and $T_n(u):=(n+1)\frac{\int_{\delta(u)}^{u} \tilde g_n(t) \,dt }{\int_{\delta(u)}^{u} \,dt }$, where $\Gamma_h$ is a closed trajectory of the unperturbed system of $(\ref{eqn1.13})$ and  $\Gamma_h: H(u,y)=h$ 
 for some $h \in (p_1,p_2)$, where $(p_1,p_2)$ is the maximal interval so that the family $\{\Gamma_h:H(u,y)=h,h \in (p_1,p_2)\}$ forms a periodic annulus, and $\delta$ is an involution  defined on $(\alpha,B)).$
\end{itemize}
Then the strict monotonicity of ${T_n}(u)$ on $(\hat a,B)$ implies that $\hat{A}(h)$ is strictly monotonic on $(p_1, p_2)$.
\end{theorem}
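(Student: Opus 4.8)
The plan is to reduce the two Abelian integrals $A_0(h)$ and $A_n(h)$ to honest one-dimensional integrals over the $u$-interval cut out by $\Gamma_h$, and then to recognise the ratio $\hat A_n(h)$ as a weighted average of $T_n$. First, since $\dot u = y = H_y = \Psi'(y)$ forces $\Psi(y)=y^2/2$, the oval $\Gamma_h$ is the graph $y=\pm\sqrt{2(h-\Phi(u))}$ over the interval $[u_-(h),u_+(h)]$ whose endpoints solve $\Phi(u_\pm)=h$. Hypothesis (a) guarantees that $\Phi$ is strictly monotone on each side of $\hat a$, so $u_\pm(h)$ and the involution $\delta$ (defined by $\Phi(\delta(u))=\Phi(u)$, $\delta(u_+)=u_-$) are well defined; I would treat the case $\Phi'(u)(u-\hat a)>0$ in detail and note that the other case is symmetric. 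Splitting the contour into its upper and lower branches gives
\[
A_n(h)=2\int_{u_-(h)}^{u_+(h)}\tilde g_n(u)\sqrt{2\bigl(h-\Phi(u)\bigr)}\,du ,
\]
and the analogous formula for $A_0(h)$ with $\tilde g_n\equiv1$.

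Next I would fold the integral through the involution. Changing the variable to $v=\Phi(u)-\Phi(\hat a)$ on each branch and integrating by parts (the boundary terms vanish because $\sqrt{\bar h-v}$ kills the upper end while the antiderivatives vanish at the centre), one obtains, with $\bar h:=h-\Phi(\hat a)$ and $s=s(v):=u_+(v)$,
\[
A_n(h)=\sqrt2\int_0^{\bar h}\frac{N(v)}{\sqrt{\bar h-v}}\,dv ,\qquad N(v)=\int_{\delta(s)}^{s}\tilde g_n(t)\,dt ,
\]
and $A_0(h)=\sqrt2\int_0^{\bar h} M(v)(\bar h-v)^{-1/2}\,dv$ with $M(v)=s-\delta(s)$. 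By the very definition of $T_n$ one has $N(v)=\tfrac1{n+1}T_n(s)\,M(v)$, so
\[
\hat A_n(h)=\frac{1}{n+1}\,\frac{\displaystyle\int_0^{\bar h} T_n(s(v))\,M(v)\,(\bar h-v)^{-1/2}\,dv}{\displaystyle\int_0^{\bar h} M(v)\,(\bar h-v)^{-1/2}\,dv}.
\]
Since $M(v)>0$, this exhibits $\hat A_n(h)$ as a genuine weighted average of $T_n(s(v))$ against the nonnegative weight $M(v)(\bar h-v)^{-1/2}\,dv$; this is exactly where the averaged combination defining $T_n$, together with its normalising factor $n+1$, enters the argument.

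The remaining — and essential — step is to show that this weighted average is strictly monotone in $h$ whenever $T_n$ is strictly monotone. The difficulty is that the weight itself depends on $\bar h$, so one cannot simply invoke \emph{``an average of an increasing function is increasing''}; indeed the statement is false for an arbitrary positive weight $M$, and it is only the specific structure of $M(v)=s-\delta(s)$ inherited from the Hamiltonian (for a nondegenerate centre $M(v)\sim c\sqrt v$ as $v\to0^{+}$) that rescues it. I would differentiate the ratio in $\bar h$, use $N(0)=M(0)=0$ to write the derivative as a symmetric double integral over $[0,\bar h]^2$ whose integrand is a combination of a term proportional to $T_n'$ and of the ``Wronskian'' $M'(v_1)M(v_2)-M(v_1)M'(v_2)$ weighted by $\bigl(T_n(s(v_1))-T_n(s(v_2))\bigr)$. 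Here I expect the main obstacle: the first piece is sign-definite directly from the monotonicity of $T_n$, but the Wronskian cross-term is not sign-definite from monotonicity alone and must be controlled using the precise behaviour of the involution $\delta$ and of $M$.

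Establishing this sign is precisely the content of the monotonicity criterion of Liu et al.\ (in the tradition of the Li--Zhang and Ma\~{n}osas--Villadelprat criteria). An alternative route that sidesteps the delicate kernel estimate is to recast the problem as bounding, for each $\lambda$, the number of zeros of the auxiliary Abelian integral $A_n(h)-\lambda A_0(h)$ by an ECT-/Chebyshev-system argument, from which strict monotonicity of $\hat A_n$ follows. Either way, once the sign of the double integral is pinned down, strict monotonicity of $\hat A_n$ on $(p_1,p_2)$ is immediate, which is the assertion of the theorem.
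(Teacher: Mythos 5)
First, a point of calibration: the paper offers no proof of this statement at all. It is Theorem D, quoted from Liu, Chen and Sun \cite{ref6} and used as a black box, so your attempt can only be measured against the original argument in that reference, not against anything internal to this paper.

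Your reduction is correct as far as it goes, and it matches the standard opening moves for criteria of this kind: $\Psi(y)=y^2/2$, the graph representation of $\Gamma_h$, the formula $A_n(h)=2\sqrt{2}\int_{u_-}^{u_+}\tilde g_n(u)\sqrt{h-\Phi(u)}\,du$, and (via Fubini, equivalently your integration by parts) the Abel-transform form
\begin{equation*}
A_n(h)=\sqrt{2}\int_0^{\bar h}\frac{N(v)}{\sqrt{\bar h-v}}\,dv,\qquad N(v)=\frac{T_n(s(v))}{n+1}\,M(v),\qquad M(v)=s(v)-\delta(s(v)),
\end{equation*}
all check out, including the constants and the observation that $s(v)$ is increasing, so monotonicity of $T_n$ on $(\hat a,B)$ transfers to $T_n(s(\cdot))$. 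But the proof stops exactly where the theorem begins. The whole content of the criterion is the implication ``$N/M$ strictly monotone $\Rightarrow$ $\mathcal{J}[N]/\mathcal{J}[M]$ strictly monotone'' for the operator $\mathcal{J}[F](\bar h)=\int_0^{\bar h}F(v)(\bar h-v)^{-1/2}\,dv$, and at precisely that point you write that fixing the sign of the Wronskian cross-term ``is precisely the content of the monotonicity criterion of Liu et al.'' That is a circular citation: the theorem is being invoked to prove itself. The alternative you gesture at (an ECT-/Chebyshev-system argument in the style of Grau--Ma\~nosas--Villadelprat) is likewise only named, not executed; verifying that $(A_0,A_n)$ forms such a system is of the same order of difficulty as the claim itself.

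Your own caveat about the weighted-average step is not merely prudent --- it is the crux, and it can be made quantitative. For an arbitrary nonnegative measure in place of $M(v)\,dv$ the implication is false: take a unit point mass at $v_0$ plus a small uniform density $\eta$, and average the increasing function $\tau=\mathbf{1}_{[v_0,\infty)}$; writing $x=\bar h-v_0$, the resulting ratio equals
\begin{equation*}
\frac{1+2\eta x}{1+2\eta\sqrt{x(x+v_0)}},
\end{equation*}
which tends to $1$ as $x\to0^+$ and as $x\to\infty$ but lies strictly below $1$ in between, hence is non-monotone (and this survives smoothing the atom and making $\tau$ strictly increasing). So the structural facts you would have to exploit --- $M$ continuous, strictly increasing, $M(0)=0$ with $M(v)\asymp\sqrt{v}$ near the centre, together with the finer information carried by the involution --- are not decoration; supplying the estimate that converts them into the sign of your double integral is the entire battle, and the proposal never fights it. In short: a correct and well-organized reduction to the decisive inequality, but no proof of the theorem.
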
 

This paper is structured as follows: 
Section 1 presented the introduction. 
Section 2 analyzes the nature of all the equilibrium points of the unperturbed system (\ref{kpr1.17}) and identifies the periodic annuli. 
Section 3 discusses the monotonicity of the function $G_n(h)$ and then presents the existence of limit cycles of (\ref{kpr2.5}) for relevant parametric ranges. 
Section 4 examines the necessary conditions required on the ratio $\frac{\alpha_0}{\alpha_n}$  for the existence of limit cycles. Finally, Section 5 puts forward the conclusions.

\section{Classification of equilibrium points and phase portraits of the unperturbed system (\ref{kpr1.17})}
It is easy to see that $(0,0)$, $(u_0,0)$ and $(k,0)$ are the fixed points of the unperturbed system (\ref{kpr1.17}). Now we will classify all the fixed points for different cases depending on the parameters of the system. The Jacobian matrix evaluated at 
the fixed point $(0,0)$ is 
$\begin{pmatrix}
0 & 1\\
\beta k u_0 & 0
\end{pmatrix}$ and its determinant is $-\beta k u_0.$ The determinants of the Jacobian matrix evaluated at the points $(u_0,0)$ and $(k,0)$ are $\beta u_0 (k-u_0)$ and  $\beta k (u_0-k)$, respectively.
\subsection{ Case: $0<u_0<k$}
In view of the  linear analysis, the fixed points $(0,0)$ and $(k,0)$ are saddle points and further investigation is required to determine whether $(u_0,0)$ is a center for the system (\ref{kpr1.17}). Clearly the unperturbed system (\ref{kpr1.17}) is a Hamiltonian system.
The trajectories of the system are given by $$H(u,y):=\frac{y^2}{2}+\beta\left(-\frac{u^4}{4}+\frac{(k+u_0)}{3}u^3-\frac{ku_0}{2} u^2\right)=h.$$ 

Let us investigate whether $(u_0,0)$ is a center for the system (\ref{kpr1.17}). Note that
\begin{eqnarray}
&&H_{u}(u,y)=\beta u(u-u_0)(k-u),\,\,H_y(u,y)=y, \nonumber\\
&&H_{u y}(u,y) =H_{y u}(u,y) =0, \,\,H_{u u}(u,y)=\beta (-3u^2+2(u_0+k)u-ku_0)\,\, ,~ H_{ y y}(u,y)=1, \nonumber\\
&&\tilde{H}(u,y):= H_{u u}(u,y)H_{ y y}(u,y)-(H_{u y}(u,y))^2.\nonumber 
\end{eqnarray}
Observe that $\tilde H(u_0,0)=\beta u_0(k-u_0)>0$ and $H_{uu}(u_0,0)=\beta u_0(k-u_0)>0.$ This implies that $(u_0,0)$ is an isolated minimum and hence $(u_0,0)$ is a center due to Theorem \ref{kp.r2}.\\

Now we are interested to find the interval $(h_1,h_2)$ such that the trajectories 
$$\{\Gamma_{h}:H(u,y)=h, \,\,h\in (h_1,h_2)\}$$ form a periodic annulus. 
Let us consider all the closed trajectories around $(u_0,0)$. The line $u=u_0$ intersects the trajectories $H(u,y)=h$ with $h> \frac{\beta u_{0}^3}{12}(u_0-2k) $ at two points in the $u-y$ plane for $y=\pm \sqrt{2} \sqrt{h-\frac{\beta u_{0}^3}{12}(u_0-2k)}$. Note that $H(u,y)=\frac{\beta u_{0}^3}{12}(u_0-2k)$ corresponds to the center $(u_0,0).$ Recall that $(k,0)$ and $(0,0)$ are saddles and $H(k,0)=\frac{\beta k^3}{12}(k-2u_0)\leq 0$ when $k \leq 2u_0$, and $H(0,0)=0$. The trajectory $H(u,y)=\frac{\beta k^3}{12}(k-2u_0)$ corresponds to the homoclinic orbit connecting $(k,0)$ when $k<2u_0.$ Also when $k=2u_0,$ $H(u,y)=0$ corresponds to the heteroclinic orbits connecting $(0,0)$ and $(k,0).$ This implies that the interval for the periodic annulus around the fixed point $(u_0,0)$ is given by $(h_1,h_2)=\left(\frac{\beta u_{0}^3}{12}(u_0-2k),\frac{\beta k^3}{12}(k-2u_0)\right)$ when $k \leq 2u_0.$ 

Similarly, for $k>2u_0,$  one can observe that the periodic annulus around $(u_0,0)$ is formed by the trajectories $H(u,y)=h$, $h \in (h_1,h_2)=\left(\frac{\beta u_{0}^3}{12}(u_0-2k),0\right).$

Phase portraits for the cases $k<2u_0$, $k=2u_0$ and $k>2u_0$ are presented in 
Figures  \ref{kpr001},  \ref{kpr02}, and \ref{kpr001a}, respectively. 

\begin{figure}
  \centering
      \includegraphics[width=1.08\linewidth]{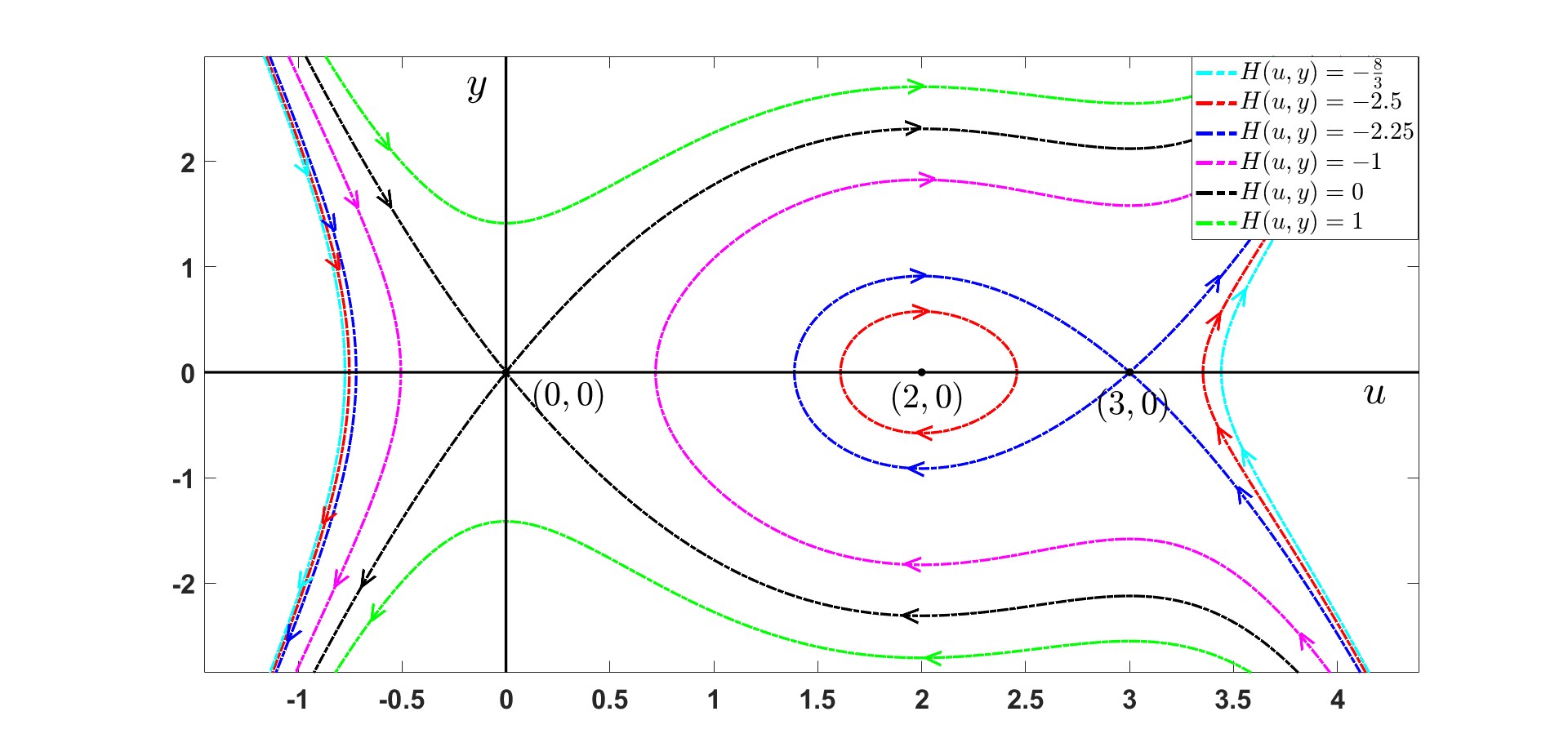}
  \caption{Phase portrait of the system (\ref{kpr1.17}) when $\beta=1,$ $u_0=2,$ and $k=3.$}
  \label{kpr001}
\end{figure}
\begin{figure}
  \centering
      \includegraphics[width=1.08\linewidth]{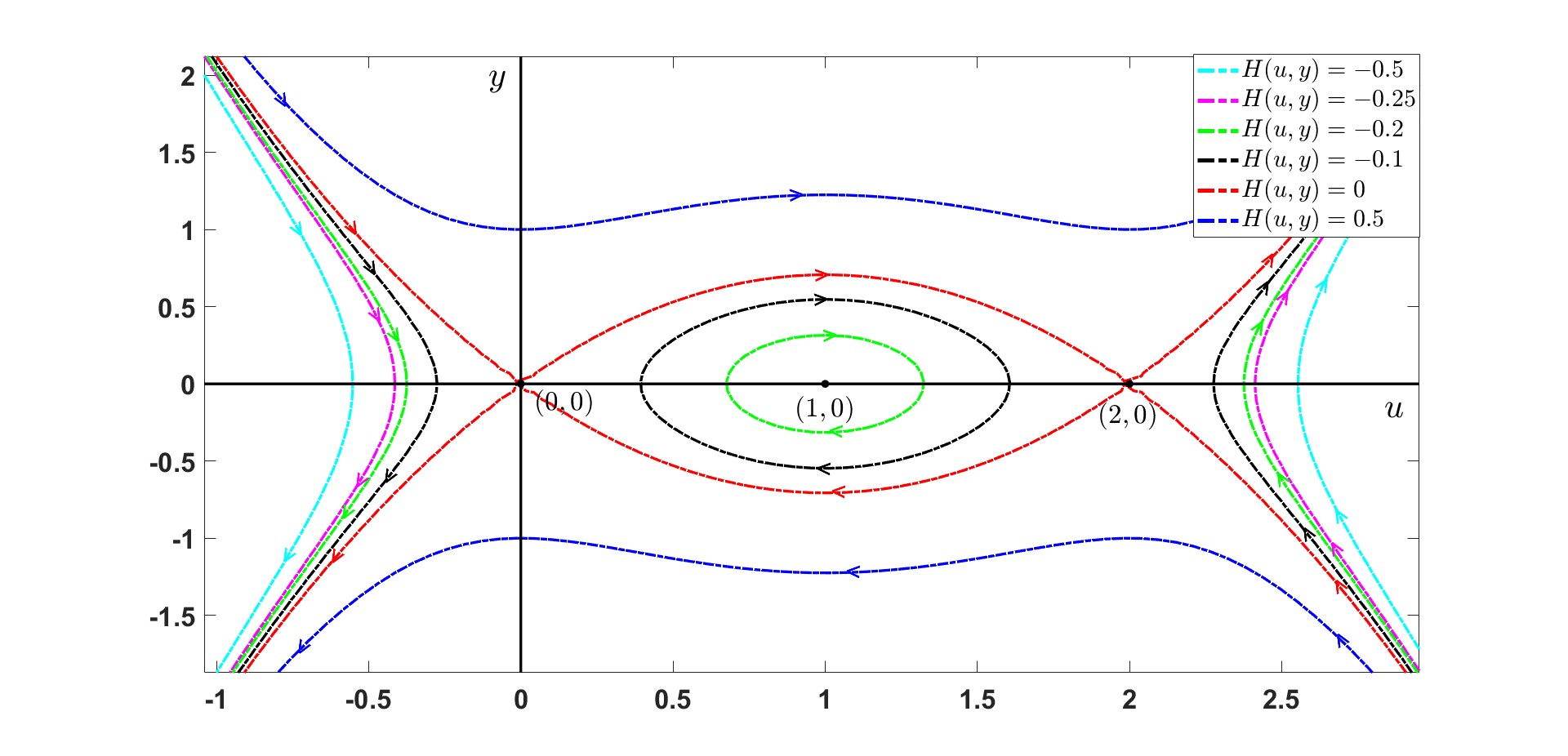}
  \caption{Phase portrait of the system (\ref{kpr1.17}) when $\beta=1,$ $u_0=1,$ and $k=2.$}
  \label{kpr02}
\end{figure}
\begin{figure}
  \centering
      \includegraphics[width=1.08\linewidth]{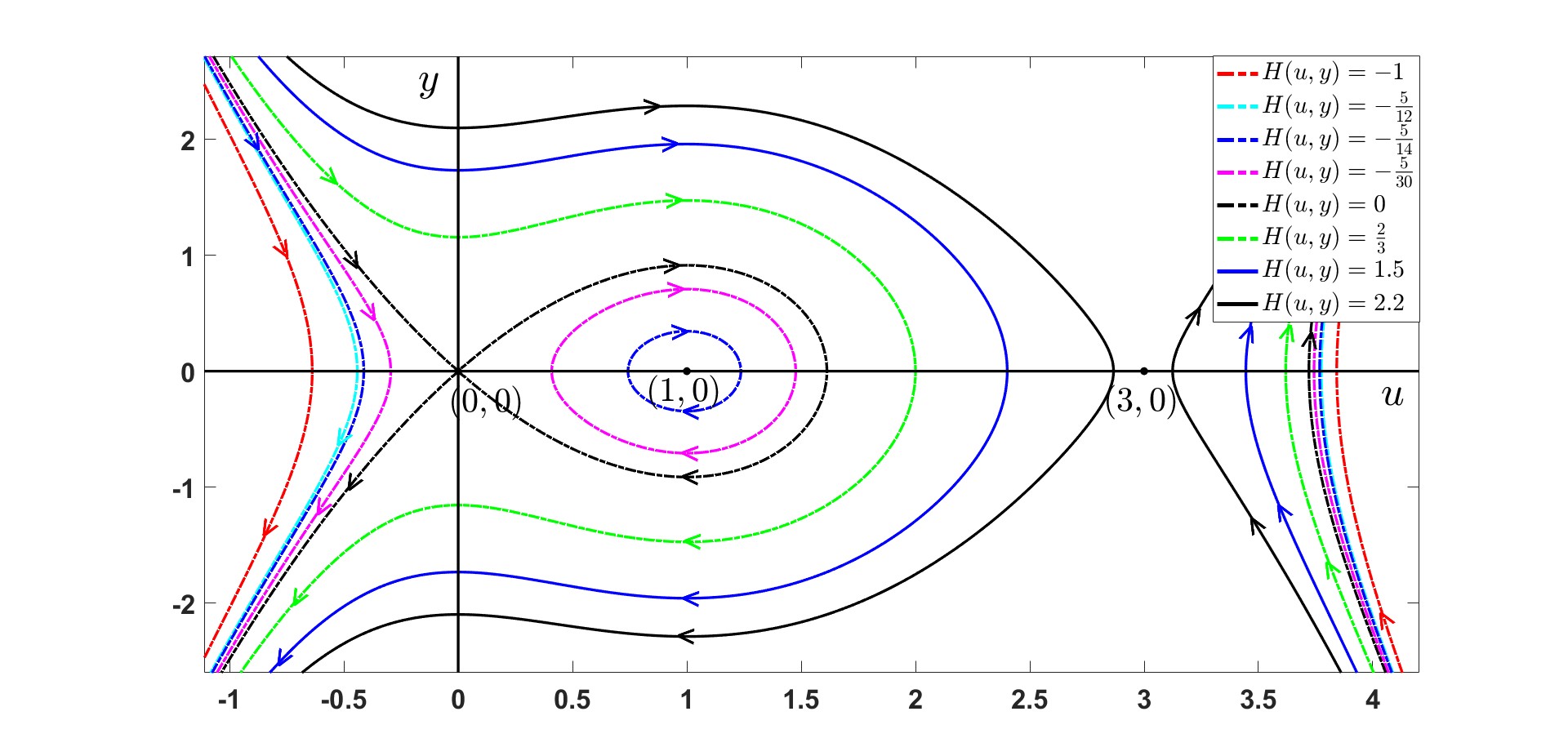}
  \caption{Phase portrait of the system (\ref{kpr1.17}) when $\beta=1,$ $u_0=1,$ and $k=3.$}
  \label{kpr001a}
\end{figure}
\subsection{Case: $-k<u_0<0$}
In this case  the fixed points $(u_0,0)$ and $(k,0)$ are saddle points due to the linear analysis and further investigation is required to determine whether $(0,0)$ is a center for the system (\ref{kpr1.17}). Note that $H_{uu}(0,0)=-\beta ku_0>0$ and $\tilde H(0,0)=-\beta k u_0>0.$ This implies that $(0,0)$ is an isolated minimum and hence  the fixed point $(0,0)$ is a center in view of  Theorem \ref{kp.r2}.

Let us consider all the closed trajectories around $(0,0)$. The line $u=0$ intersects the trajectories $H(u,y)=h>0$ at two points in the $u-y$ plane for $y=\pm \sqrt{2h}.$ Note that  $H(u,y)=0$ corresponds to the center $(0,0).$ Recall that $(k,0)$ and $(u_0,0)$ are saddles. 
Further $H(u_0,0)=\frac{\beta u_{0}^3}{12}(u_0-2k)> 0$ and $ H(k,0)=\frac{\beta k^3}{12}(k-2u_0)>H(u_0,0)$. Then $H(u,y)=\frac{\beta u_{0}^3}{12}(u_0-2k)$ corresponds to the homoclinic orbit connecting $(u_0,0)$. This, in turn, implies that the trajectories $\Gamma_h$, $h \in (h_1,h_2)=\left(0,\frac{\beta u_{0}^3}{12}(u_0-2k)\right)$ form the periodic annulus around the fixed point $(0,0).$ A typical phase portrait for this  case is presented in Figure \ref{kpr002}.
\begin{figure}
  \centering
      \includegraphics[width=1.08\linewidth]{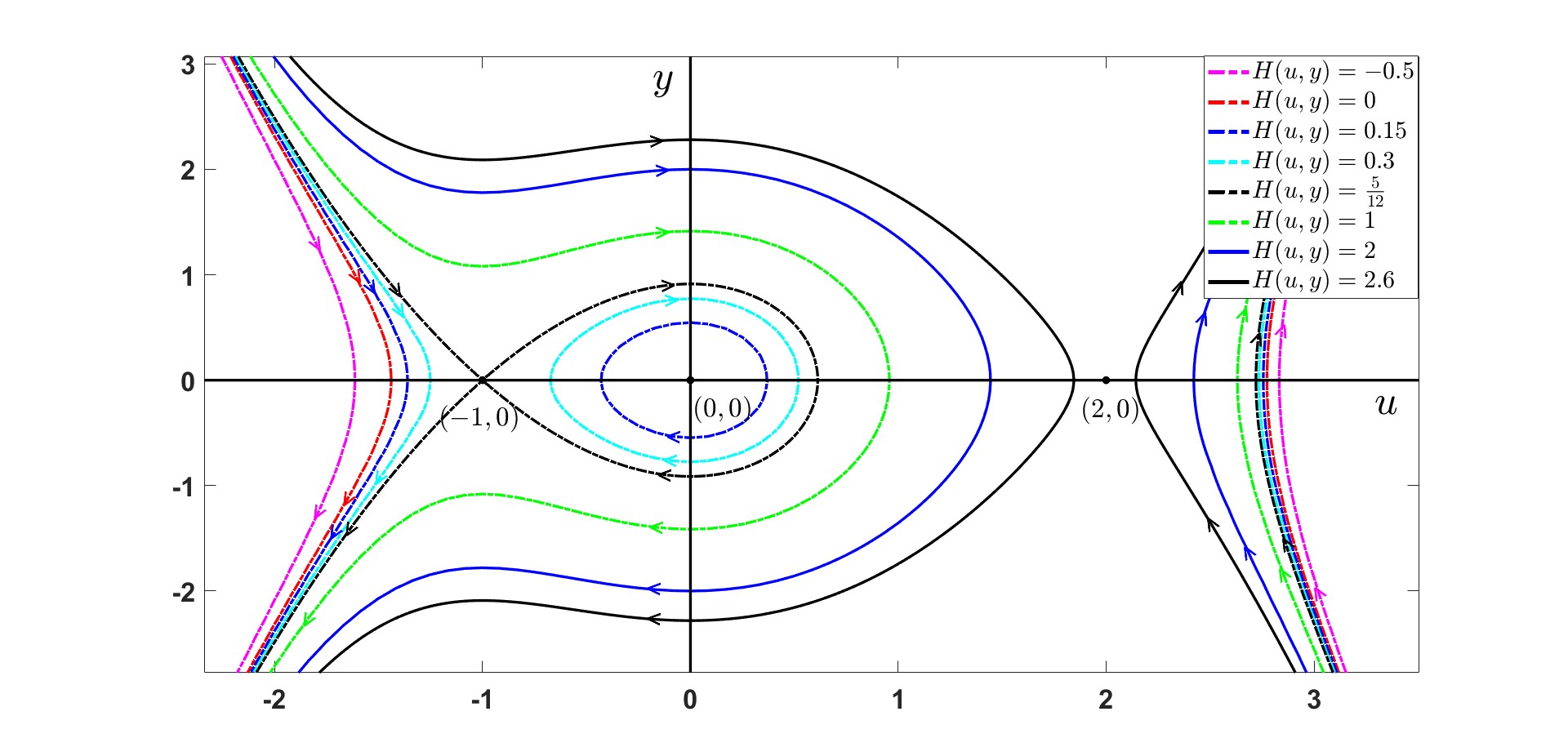}
  \caption{Phase portrait of the system (\ref{kpr1.17}) when $\beta=1,$ $u_0=-1,$ and $k=2.$}
  \label{kpr002}
\end{figure}

\subsection{Case: $u_0<-k$}
Fixed points $(u_0,0)$ and $(k,0)$ are saddle points in view of the linear analysis and further investigation is required to determine whether $(0,0)$ is center for the system (\ref{kpr1.17}).  In this case $H_{uu}(0,0)=-\beta ku_0>0$ and $\tilde H(0,0)=-\beta k u_0>0.$ This implies that $(0,0)$ is an isolated minimum and hence $(0,0)$ is a center due to Theorem \ref{kp.r2}.

Let us consider all the closed trajectories around $(0,0)$. The line $u=0$ intersects the trajectories $H(u,y)=h>0$ at two points in the $u-y$ plane for $y=\pm \sqrt{2h}.$ Further  $H(u,y)=0$ corresponds to the center $(0,0).$ Recall that the fixed points $(k,0)$ and $(u_0,0)$ are saddles. Note that  $ H(k,0)=\frac{\beta k^3}{12}(k-2u_0)>0$ and $H(u_0,0)=\frac{\beta u_{0}^3}{12}(u_0-2k)> H(k,0).$ Then $H(u,y)=\frac{\beta k^3}{12}(k-2u_0)$ corresponds to the homoclinic orbit connecting $(k,0)$. This implies that the trajectories $\Gamma_h,$ $h\in(h_1,h_2)=\left(0,\frac{\beta k^3}{12}(k-2u_0)\right)$ form the periodic annulus around the fixed point $(0,0).$ A representative phase portrait for this case is given in Figure \ref{kpr03}.
\begin{figure}
  \centering
      \includegraphics[width=1.08\linewidth]{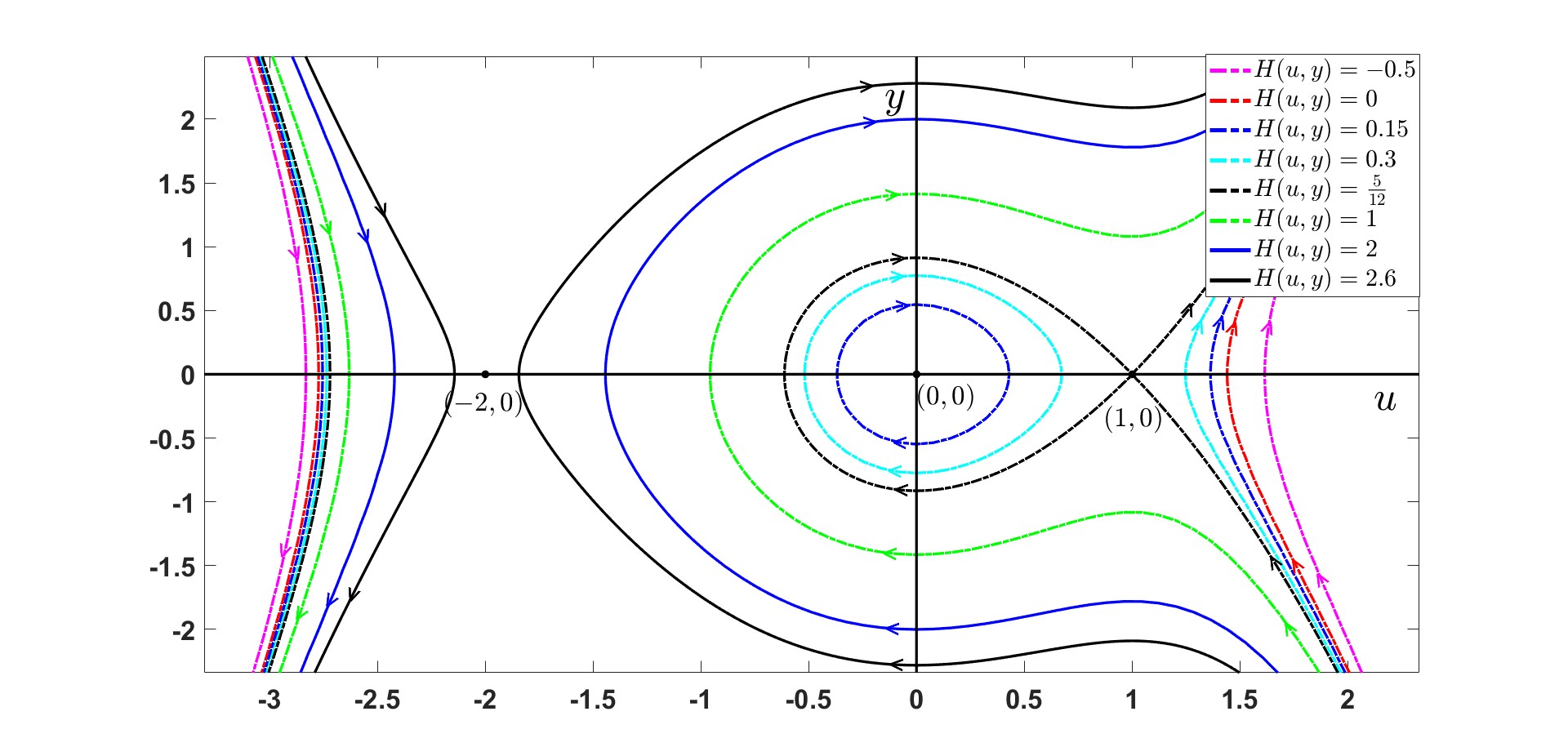}
  \caption{Phase portrait of the system (\ref{kpr1.17}) when $\beta=1,$ $u_0=-2,$ and $k=1.$}
  \label{kpr03}
\end{figure}

\subsection{Limiting cases [$u_0=k$ and $u_0=-k$]}
\rm(i) For $0<u_0=k$, the fixed point $(0,0)$ is a saddle point due to linear analysis and the nilpotent equilibrium point $(u_0,0)$ is a cusp due to Theorem \ref{kp.r1}. A typical phase
portrait for this case is presented in Figure \ref{kpr04}.
As there is no center in this case, we shall not discuss this case further.\\
\rm(ii) For $u_0=-k<0,$ the fixed points  $(u_0,0)$ and $(k,0)$ are saddle points due to linear analysis and further investigation is required to determine whether $(0,0)$ is a center for the system (\ref{kpr1.17}).  Observe that  $H_{uu}(0,0)=-\beta ku_0>0$ and $\tilde H(0,0)=\beta k^2>0.$ This implies that $(0,0)$ is an isolated minimum and hence $(0,0)$ is a center in view of  Theorem \ref{kp.r2}.

Let us consider all closed trajectories around $(0,0)$. The line  $u=0$ intersects $H(u,y)=h>0$ at two points in the $u-y$ plane for $y=\pm \sqrt{2h}.$ Note that $H(u,y)=0$ corresponds to the center $(0,0).$ Recall that  $(k,0)$ and $(u_0,0)$ are saddles. Further  $ H(k,0)=\frac{\beta k^4}{4}=H(u_0,0)>0.$ Then  $H(u,y)=\frac{\beta k^4}{4}$ corresponds to the heteroclinic orbits connecting $(k,0)$ and $(u_0,0)$. This implies that the trajectories $\Gamma_h,$ $h \in (h_1,h_2)=\left(0,\frac{\beta k^4}{4}\right)$ form the periodic annulus around $(0,0).$ Phase portrait for a choice of the parameters for this case is presented in Figure \ref{kpr004}.

A summary of the classification of all fixed points and the intervals of $h$ such that $\{H(u,y)=h:h\in(h_1,h_2)\}$ form  periodic annuli are presented in Table \ref{tab0}.
\begin{figure}
  \centering
      \includegraphics[width=1.08\linewidth]{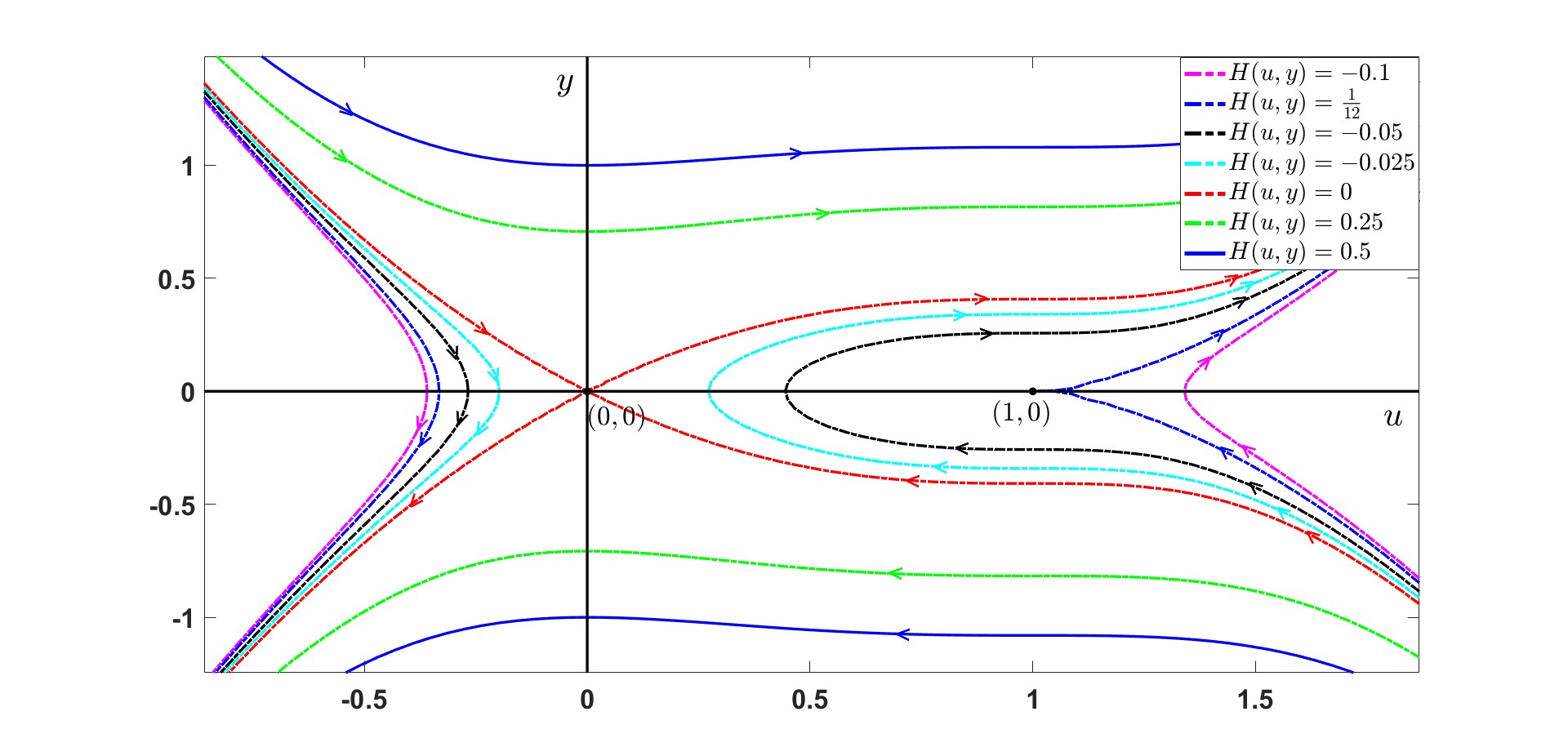}
  \caption{Phase portrait of the system (\ref{kpr1.17}) when $\beta=1,$ $u_0=1,$ and $k=1.$}
  \label{kpr04}
\end{figure}

\begin{figure}
  \centering
      \includegraphics[width=1.08\linewidth]{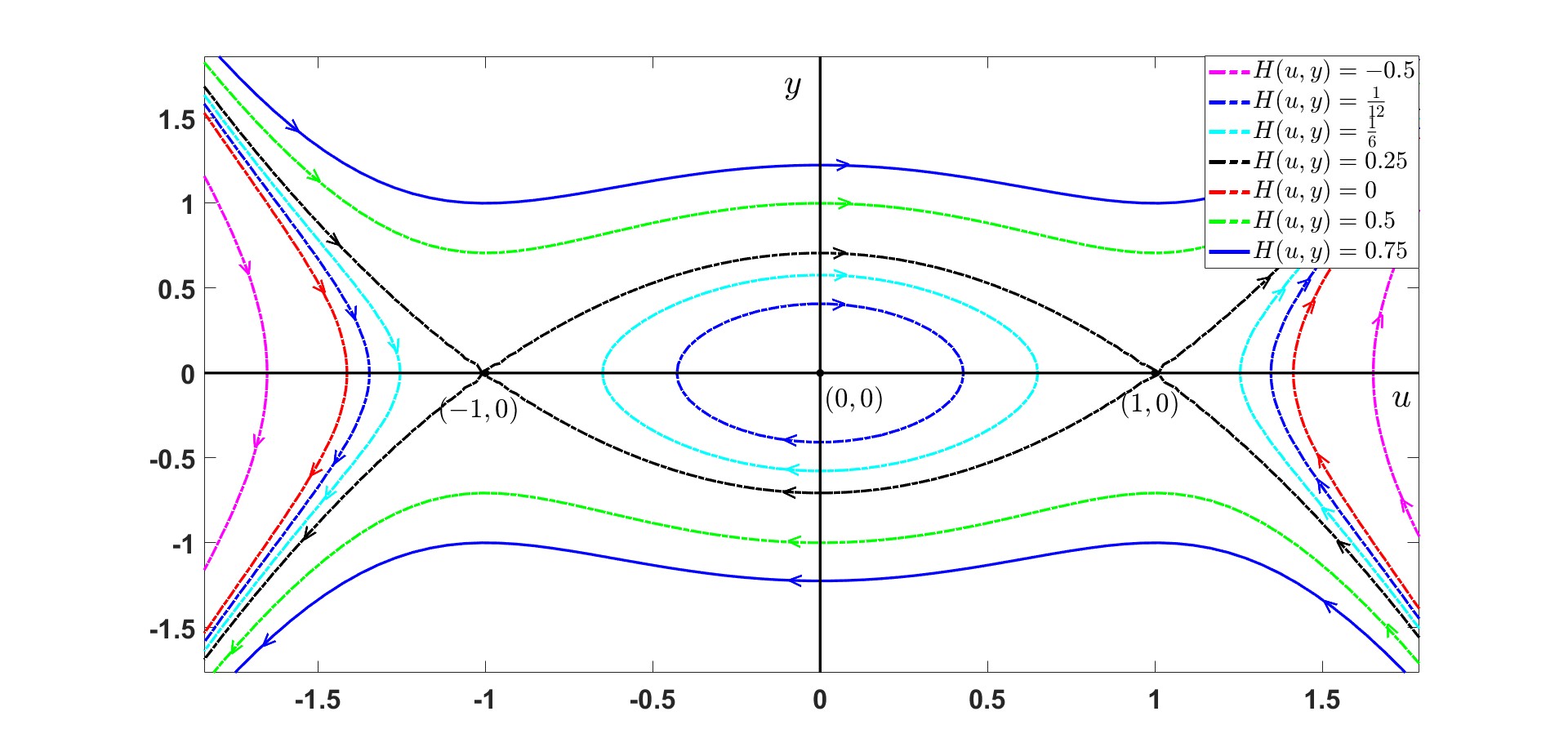}
  \caption{Phase portrait of the system (\ref{kpr1.17}) when $\beta=1,$ $u_0=-1,$ and $k=1.$}
  \label{kpr004}
\end{figure}

\begin{landscape}
\begin{table}
    \centering
\begin{center}
    \begin{tabular}{ |p{1.9cm} | p{1cm}|p{2.8cm}|p{2.4cm} | p{2.2cm} |p{2.2cm} | p{1.2cm} |}
    \hline
    & $ $ & ~~~$0<u_0<k$ &$-k<u_0<0$& \text{~~~~}$u_0<-k$&\text{~~~~}$u_0=-k$ & $u_0=k$\\ \hline
    & {$(0,0)$}& Saddle & Center & Center& Center & Saddle \\ \cline{2-7}
  Nature of fixed point &$(u_0,0)$& Center & Saddle & Saddle & Saddle&Cusp\\ \cline{2-7}
    & $(k,0)$& Saddle & Saddle & Saddle& Saddle & Cusp \\ \hline
 \multirow{3}{*} {Interval of $h$} & \multirow{3}{*}{$(h_1,h_2)$}& $\left( H(u_0,0),H(k,0)\right )$ when $k\leq 2u_0$ &  \multirow{3}{*} {$~\left( 0,H(u_0,0)\right )$} &\multirow{3}{*} {$\left( 0,H(k,0)\right )$} &\multirow{3}{*} {$\left( 0,H(k,0)\right )$} & \multirow{3}{*} {~~~-} \\ \cline{3-3}
    & $ $ & $(H(u_0,0),0)$ when $k> 2u_0$ & & & &\\
    \hline
    \end{tabular}
   
\end{center}
\caption{Classification of fixed points for the system of (\ref{kpr1.17}), and intervals of $h$ for periodic annuli.}
    \label{tab0}
     
\end{table}
\end{landscape}

\begin{table}[!h]
\centering
\begin{center}
    \begin{tabular}{ |p{4cm} | p{4cm}|p{4cm} | p{2cm} |}
    \hline
    ~~~~~ & ~~~~~$w-$ variation & ~~~~~~$u-$ variation\\ \hline
    $~~~0<u_0<k<2u_0$ &~~ $ 0<w_1<w<u_0$; &~~~$u_0<u<k$  \\ 
    ~~~~~~&~~~~~$\Phi(w_1)=\Phi(k)$&~~~~~~\\
    \hline
$~~~0<u_0<k=2u_0$ &~~~ $ 0<w<u_0$ &~~~$u_0<u<2u_0$  \\ \hline  
    $~~~-k<u_0<0$ & $~~~~u_0<w<0$& ~~~~$0<u<u_1<k;$\\
    ~~~~~~&~~~~&~~~~~$\Phi(u_1)=\Phi(u_0)$ \\
    \hline
    $~~~~~~u_0<-k$ & $~~~~u_0<w_2<w<0;$& ~~~~$0<u<k$\\
    ~~~~~& ~~~~~$\Phi(w_2)=\Phi(k)$ &\\
    \hline
    $~~~~~~u_0=-k$ & $~~~-k<w<0$& ~~~~$0<u<k$ \\
    \hline
    \end{tabular}
   \caption{$u-$ and $w-$ variations for different cases for $k>0.$} 
   \label{tab2}
\end{center}
\end{table}
\noindent  Table \ref{tab2} presents the variations of $u$ and $w$. A detailed discussion is presented in subsections 3.1 to 3.3. 
\section{Existence of isolated periodic waves of (\ref{eq3.01})}
In this section we discuss the existence /nonexistence of limit cycles of the perturbed system (\ref{kpr2.5})
using Theorems C and D. We start with a center of the unperturbed system (\ref{kpr1.17}) and then prove the existence / nonexistence of the closed orbits of the perturbed system (\ref{kpr2.5}) depending on the parameters involved for sufficiently small $\epsilon$. That means, our discussion (in this section) will be around the centers of the unperturbed system (\ref{kpr1.17}): 
(i) $(0,0)$ when $u_0<0$ (ii) $(u_0,0)$ when $0<u_0<k.$ We also need the function $\Phi$ to satisfy the condition (a) of Theorem D and the existence of an involution defined around $u=0$ and $u=u_0$ for the cases (i) and (ii), respectively. Graphs of $\Phi$ for the relevant cases (i) and (ii) are presented in Figures \ref{kpinv}-\ref{kpinv5}. One may observe that the condition (a) of Theorem D is satisfied for all the relevant cases (see Figures \ref{kpinv}-\ref{kpinv5}). 

For example, the fixed point $(u_0,0)$ is a center of the system (\ref{kpr1.17}) when $0<u_0<k$. It is easy to verify that $\Phi'(u)(u-u_0)>0$ for $u\in (w_{1},k)\setminus \{u_0\},$ where $w_{1}$ satisfies $\Phi(w_{1})=\Phi(k)$ for  $0<w_{1}<u_0.$ Further there exists an involution $\delta$ defined on $(w_{1},k)$ such that $\Phi(u)=\Phi(\delta(u))$ (see figure \ref{kpinv}).

\begin{figure}
  \centering
      \includegraphics[width=1.08\linewidth]{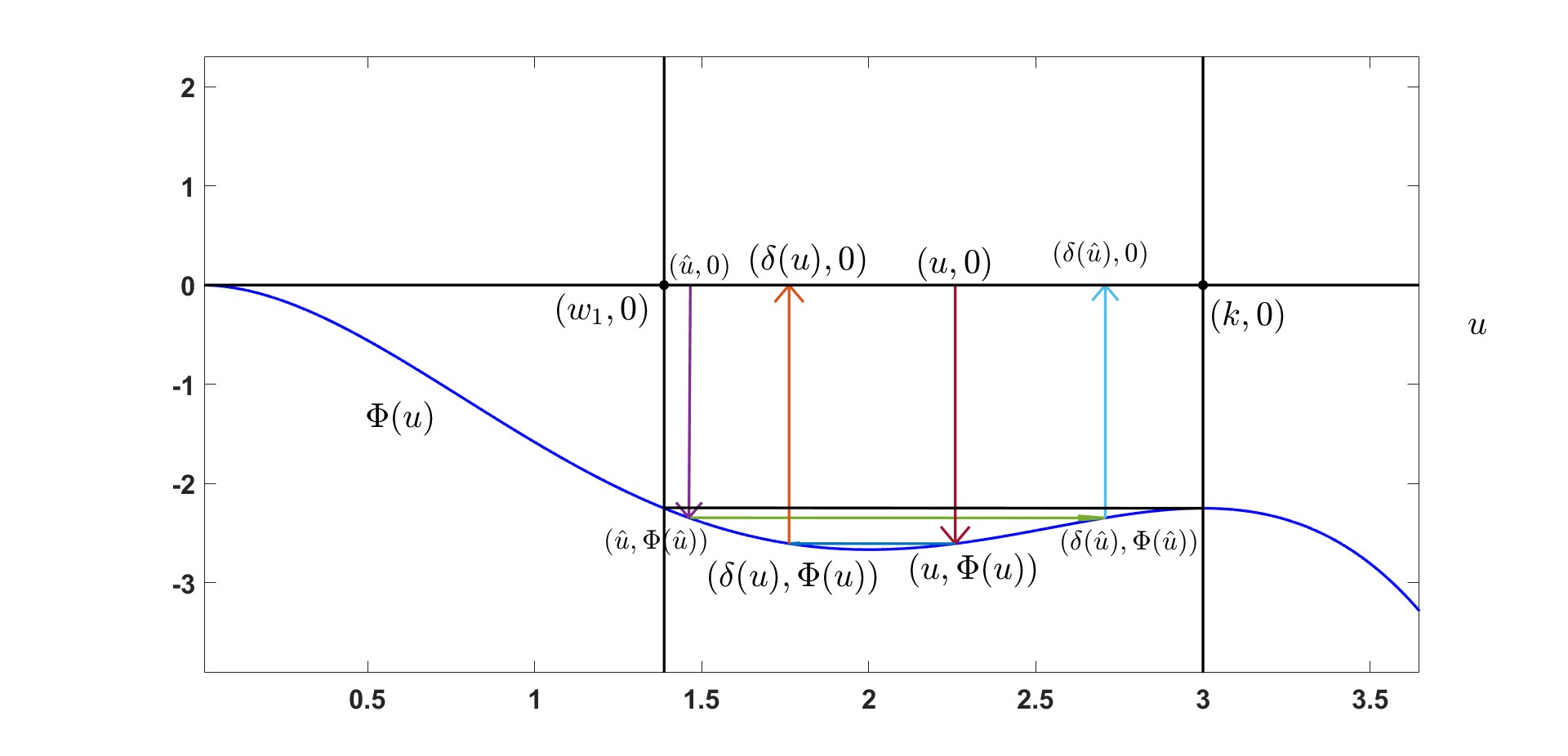}
  \caption{Graph of $\Phi$ with a representation of the involution $\delta$ when $\beta=1,$  $0<u_0<k<2u_0$ ($u_0=2$ and $k=3).$}
  \label{kpinv}
\end{figure}

\begin{figure}
  \centering
      \includegraphics[width=1.08\linewidth]{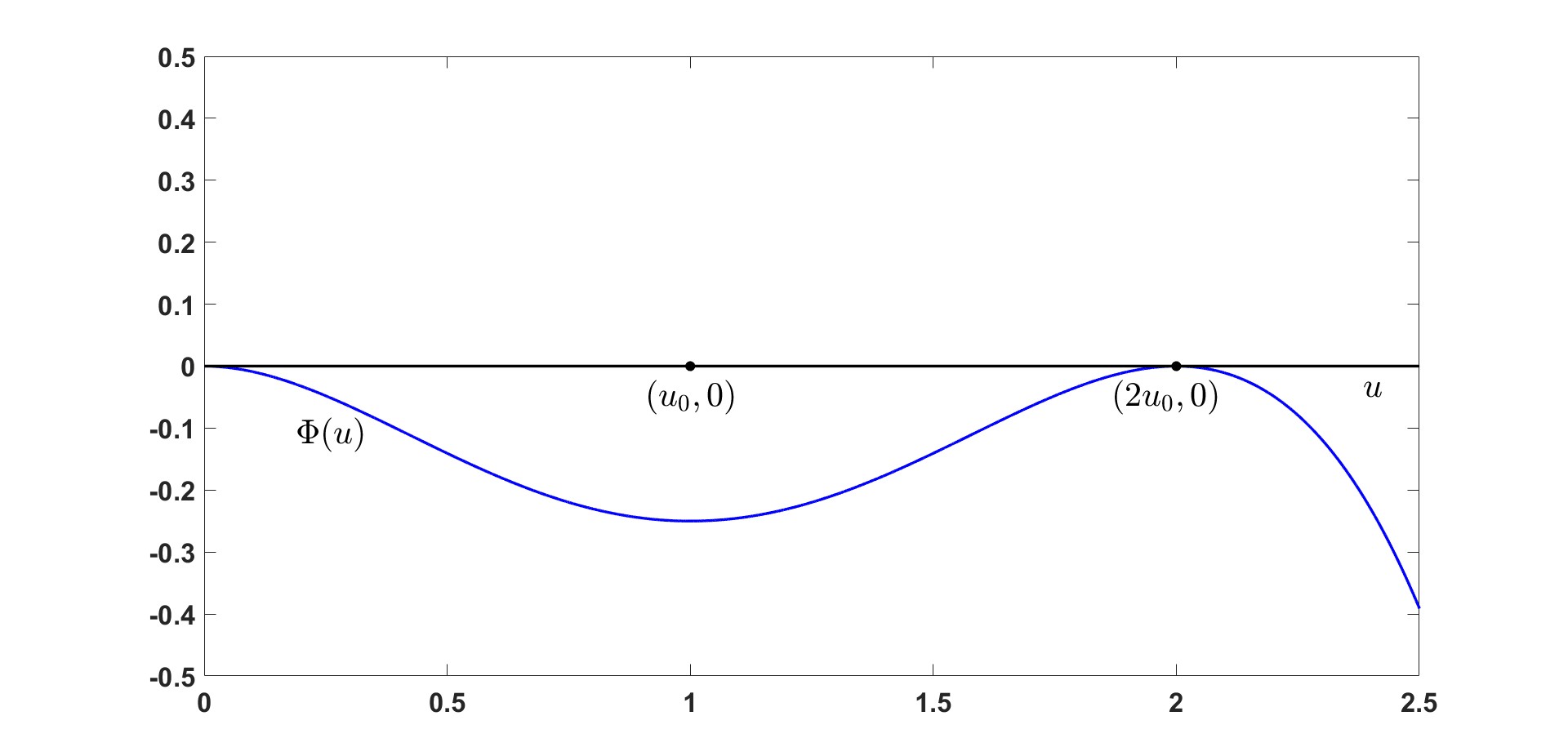}
  \caption{Graph of $\Phi$ when $\beta=1,$ $k=2u_0$ ($u_0=1$ and $k=2$).}
  \label{kpinv2}
\end{figure}

\begin{figure}
  \centering
      \includegraphics[width=1.08\linewidth]{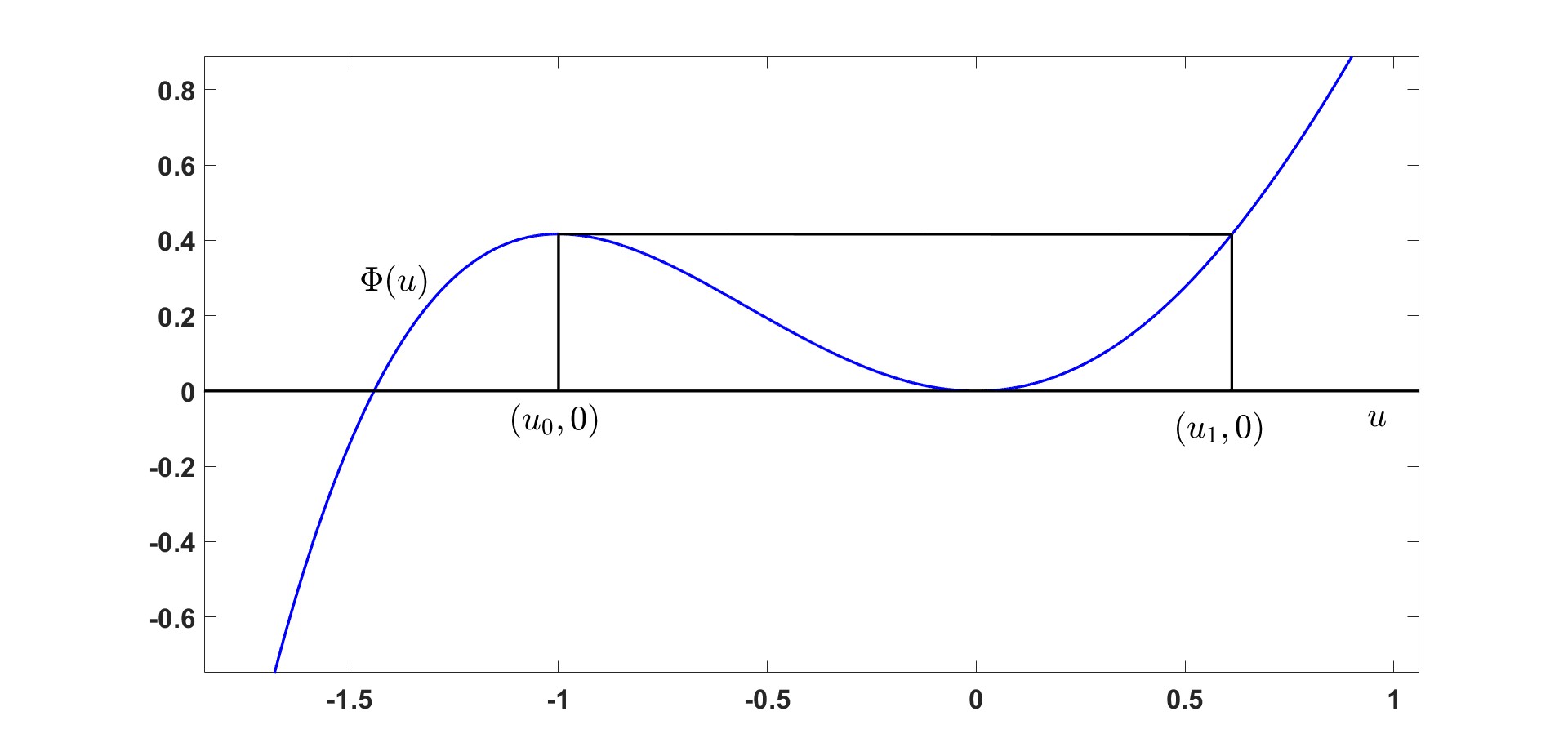}
  \caption{Graph of $\Phi$ when $\beta=1,$ $-k<u_0<0$ ($u_0=-1$ and $k=2$).}
  \label{kpinv3}
\end{figure}

\begin{figure}
  \centering
      \includegraphics[width=1.08\linewidth]{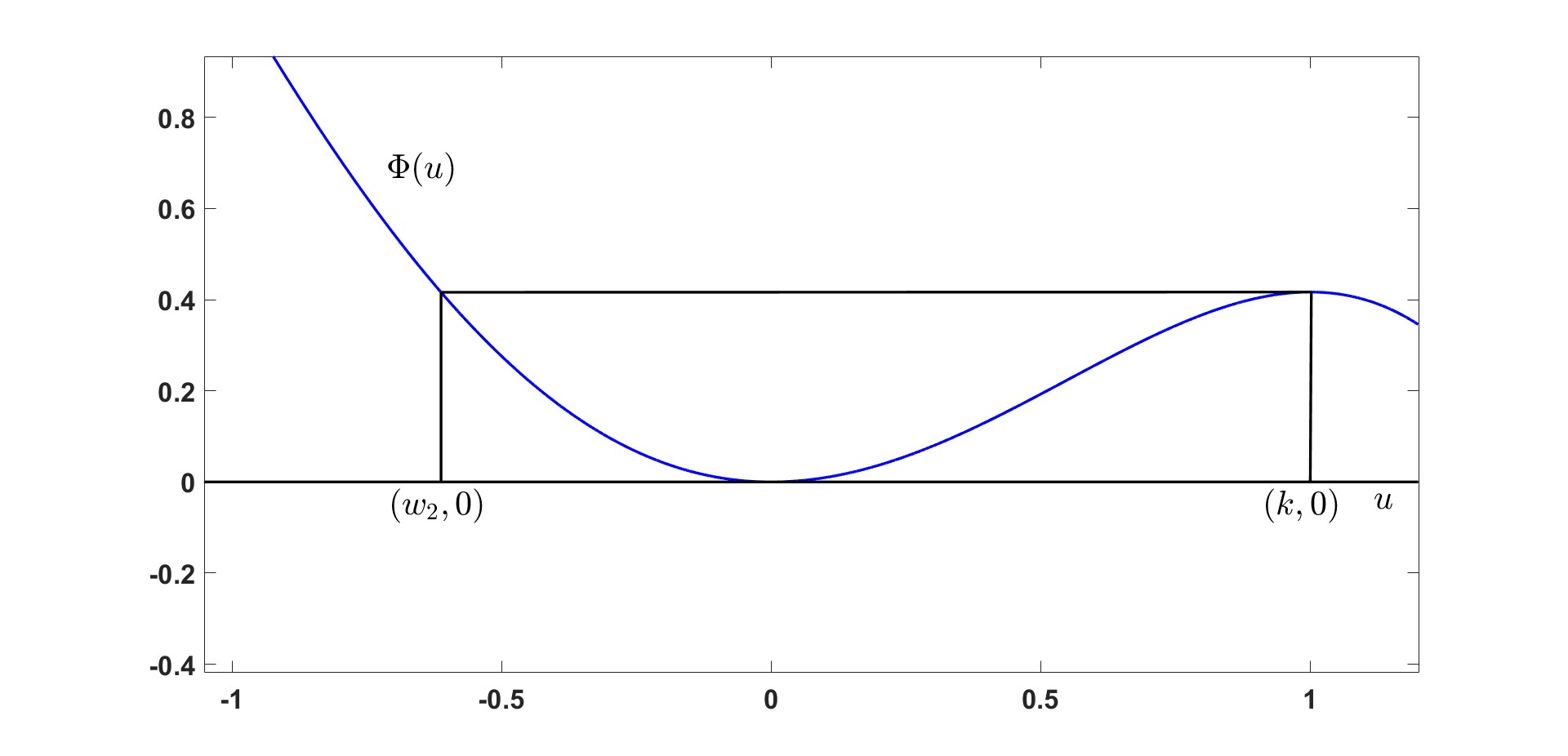}
  \caption{Graph of $\Phi$ when $\beta=1,$ $u_0<-k$ ($u_0=-2$ and $k=1$).}
  \label{kpinv4}
\end{figure}

\begin{figure}
  \centering
      \includegraphics[width=1.08\linewidth]{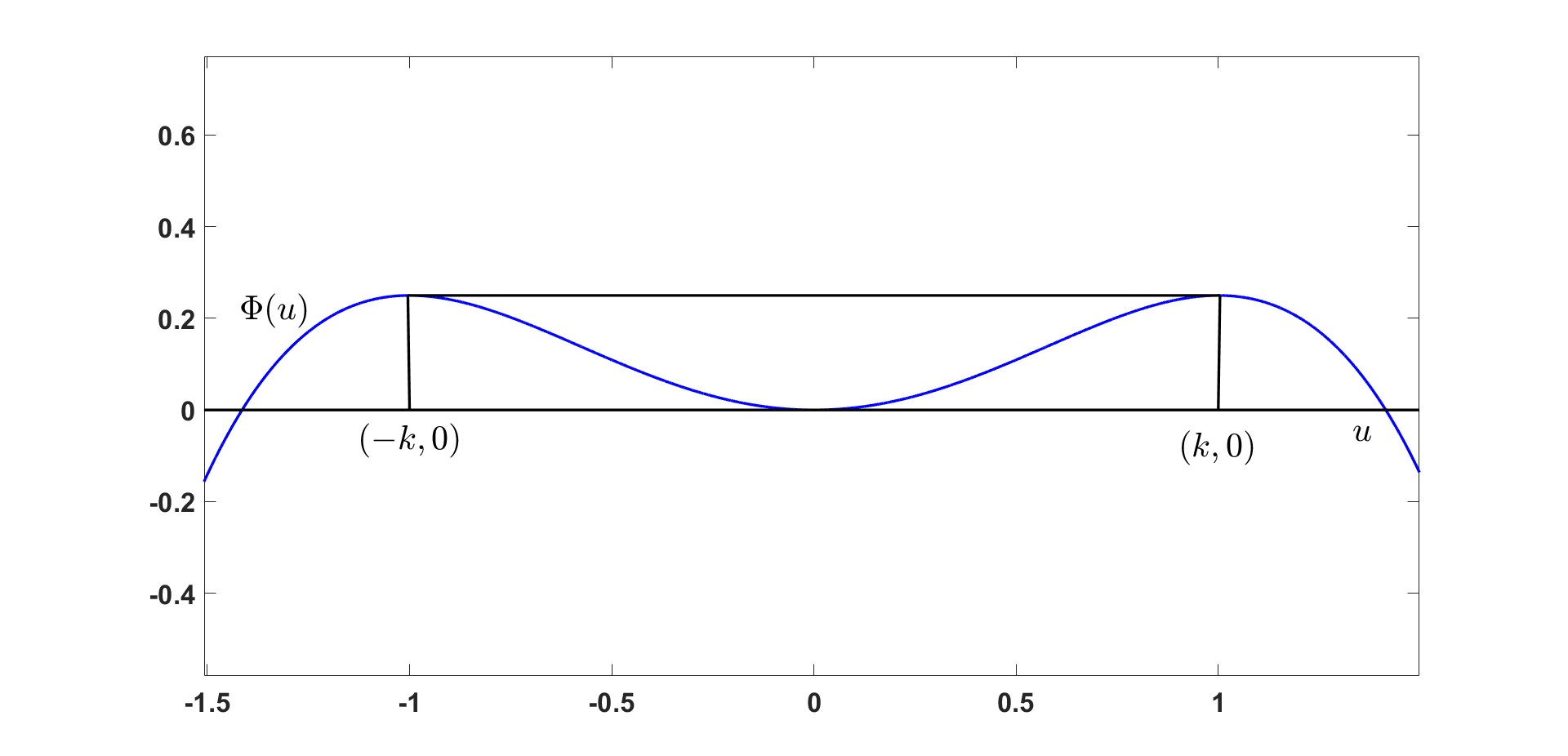}
  \caption{Graph of $\Phi$ when $\beta=1,$ $u_0=-k$ ($u_0=-1$ and $k=1$).}
  \label{kpinv5}
\end{figure}

In this section, we discuss  four cases and the results are summarized in the  following Theorem.
\begin{Theorem}\label{lemma3.01}
Equation (\ref{eq3.01}) has at most one isolated periodic wave solution for the cases :
\begin{itemize}
\item[\rm(i)] $0<u_0<k \leq 2u_0$ for all integers $n \geq 2.$
\item[\rm(ii)] $-k<u_0<0$ for all  integers $n \geq 2.$
\item[\rm(iii)] $u_0<-k$ for all  integers $n \geq 2.$
\item[\rm(iv)] $u_0=-k$ for all even positive integers $n.$
\end{itemize}
\end{Theorem}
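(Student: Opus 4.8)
The plan is to reduce the claim, via Theorem \ref{lemma3.1}, to the strict monotonicity of the ratio $G_n(h)=A_n(h)/A_0(h)$, and then, via Theorem \ref{thmc}, to the strict monotonicity of an elementary one-variable function $T_n$. For the first reduction I would write $A(h)=A_0(h)\bigl(\alpha_0+\alpha_n G_n(h)\bigr)$; since $A_0(h)\neq 0$ on the periodic annulus (it is an enclosed area, by Green's theorem), a zero $\tilde h$ of $A$ is exactly a solution of $G_n(\tilde h)=-\alpha_0/\alpha_n$ (the degenerate cases $\alpha_n=0$ and $(\alpha_0,\alpha_n)=(0,0)$ being immediate). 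If $G_n$ is strictly monotonic on $(p_1,p_2)$, then such $\tilde h$ is unique and $A'(\tilde h)=\alpha_n A_0(\tilde h)\,G_n'(\tilde h)\neq0$, so Theorem \ref{lemma3.1} produces exactly one limit cycle bifurcating from $\Gamma_{\tilde h}$ and forbids any other; under the correspondence ``limit cycle $\leftrightarrow$ isolated periodic wave'' this is the theorem. Thus everything comes down to proving $G_n$ strictly monotonic in each of the four cases.

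For the second reduction I would apply Theorem \ref{thmc} case by case. The center $(\hat a,0)$ (with $\hat a=u_0$ in (i) and $\hat a=0$ in (ii)--(iv)), the interval $(\alpha,B)$, and the involution $\delta$ (satisfying $\Phi(u)=\Phi(\delta(u))$ and $\delta(\hat a)=\hat a$) are already identified in Section 2, and condition (a) is verified there. With $\tilde g_n(u)=u^n$ the relevant function is $T_n(u)=\bigl(u^{n+1}-\delta(u)^{n+1}\bigr)/\bigl(u-\delta(u)\bigr)$, so it suffices to show $T_n$ is strictly monotonic on $(\hat a,B)$. Case (iv) is then immediate: when $u_0=-k$ one has $k+u_0=0$, so $\Phi(u)=\beta\bigl(-u^4/4+(k^2/2)u^2\bigr)$ is even, whence $\delta(u)=-u$ and $T_n(u)=\bigl(u^{n+1}-(-u)^{n+1}\bigr)/(2u)$. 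For even $n$ this equals $u^n$, strictly increasing on $(0,k)$; for odd $n$ it vanishes identically, which is precisely why (iv) is stated only for even $n$.

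For cases (i)--(iii) I would compute $T_n'$. Setting $v=\delta(u)$ and using $\delta'(u)=\Phi'(u)/\Phi'(v)$ (from differentiating $\Phi(u)=\Phi(v)$), one gets
$$T_n'(u)=\frac{\Phi'(v)\,P(u,v)+\Phi'(u)\,P(v,u)}{\Phi'(v)\,(u-v)^2},\qquad P(u,v):=nu^{n+1}-(n+1)u^nv+v^{n+1}.$$
On $(\hat a,B)$ condition (a) forces $\Phi'(u)>0$ and $\Phi'(v)=\Phi'(\delta(u))<0$, so $(u-v)^2>0$ and the sign of $T_n'$ is that of $-\bigl(\Phi'(v)P(u,v)+\Phi'(u)P(v,u)\bigr)$. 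To control this I would eliminate the involution: dividing $\Phi(u)-\Phi(v)=0$ by $u-v$ leaves a symmetric cubic relation, which in the variables $s=u+v$, $p=uv$ solves for $p=p(s)$ and turns the numerator into a one-variable expression along the constraint curve; equivalently, substituting $\Phi'(u)=\beta u(u-u_0)(k-u)$ and using the factorization $P(u,v)=(u-v)^2\sum_{j=1}^n j\,u^{j-1}v^{n-j}$ reduces everything to the signs of $u$, $u-u_0$ and $k-u$ that distinguish the three cases.

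The hard part will be showing this numerator keeps a constant sign uniformly in the integer $n\geq2$. Because $\deg P$ grows with $n$, I do not expect a single closed-form inequality to suffice; instead I anticipate either inducting on $n$ through the two-term recursion $T_n=s\,T_{n-1}-p\,T_{n-2}$ satisfied by $T_n=\bigl(u^{n+1}-v^{n+1}\bigr)/(u-v)$, or proving a uniform estimate for $\Phi'(v)P(u,v)+\Phi'(u)P(v,u)$ on the constraint curve. The sign bookkeeping should be most delicate in cases (ii) and (iii), where $v=\delta(u)<0$ makes the terms of $P(u,v)$ and $P(v,u)$ alternate in sign, in contrast with the all-positive configuration of case (i).
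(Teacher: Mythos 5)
Your skeleton is exactly the paper's: the factorization $A(h)=A_0(h)\bigl(\alpha_0+\alpha_n G_n(h)\bigr)$ with $A_0\neq 0$ reduces the theorem, via Theorem \ref{lemma3.1}, to strict monotonicity of $G_n$, which in turn reduces, via Theorem \ref{thmc} with $\tilde g_n(u)=u^n$, to strict monotonicity of $T_n$ on $(\hat a,B)$. Your formula for $T_n'$ is correct, and after your own factorization $P(u,v)=(u-v)^2\sum_{j=1}^n j\,u^{j-1}v^{n-j}$ it is literally the paper's identity $T_n'(u)=\frac{1}{wf(w)}\{g_n(u,w)uf(u)+g_n(w,u)wf(w)\}$ with $w=\delta(u)$ and $g_n(u,w)=\sum_{i=1}^n i w^{i-1}u^{n-i}$. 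Case (iv) you dispose of completely and correctly ($\delta(u)=-u$, so $T_n(u)=u^n$ for even $n$), matching Proposition \ref{thm2.5}; the paper additionally observes that for odd $n$ one has $A_n(h)\equiv 0$ by symmetry, so there are no limit cycles at all in that subcase, not merely a failure of the method.

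For cases (i)--(iii), however, your proposal stops precisely where the proof begins: the fixed sign of $\Phi'(v)P(u,v)+\Phi'(u)P(v,u)$ uniformly in $n\geq 2$ is the entire technical content of the paper, and you leave it as ``the hard part'' with two unexecuted conjectural strategies. Concretely, the paper needs three ingredients you do not supply: (a) a comparison of $uf(u)$ against $-wf(w)$ along the involution (Lemma \ref{eq2.2}), which in case (i) with $k<2u_0$ is proved by a genuinely two-dimensional argument --- showing the involution curve lies in the triangle with vertices $(u_0,u_0)$, $(k,u_0)$, $(k,2u_0-k)$ and optimizing $F(u,w)$ over it; this is exactly where the hypothesis $k\leq 2u_0$ enters (the critical line $u+w=\tfrac{2(k+u_0)}{3}$ misses the triangle only when $k<2u_0$, and the paper's Remark exhibits for $k>2u_0$, $n=2$ that the numerator changes sign, so no uniform-in-$n$ scheme of the kind you sketch can succeed there); (b) the sign of $u+\delta(u)$ in cases (ii)--(iii), obtained from the identity $(u+w)\bigl(f(u)+f(w)\bigr)=-\tfrac{k+u_0}{3}(u-w)^2$ (Lemma \ref{lem01}); and (c) parity-split inductive estimates giving the signs of $g_n(u,w)$, $g_n(w,u)$ and the comparison between them (Lemmas \ref{lemmarao1}, \ref{lem1}, \ref{lemma4.4}, \ref{lem3.11}, \ref{lem3.9}, with $n\bmod 4$ treated separately for even $n$). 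Your remark that everything reduces ``to the signs of $u$, $u-u_0$ and $k-u$'' is too optimistic: in case (ii), for instance, one must combine $uf(u)>-wf(w)>0$ with $g_n(u,w)<0$ and $g_n(w,u)>g_n(u,w)$ (even $n$), or with $g_n(u,w)>0$ and $g_n(w,u)<g_n(u,w)$ (odd $n\geq 3$), and none of these follows from pointwise sign inspection of $\Phi'$; they require the involution constraint quantitatively. So the reduction scaffolding is sound and identical to the paper's, but the proposal has a genuine gap at the quantitative heart of cases (i)--(iii).
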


One may easily observe  that there exists an involution $\delta$ in each of the caases (i)- (iv) as the relevant fixed point is a  center and condition $(a)$ of the Theorem \ref{thmc} is satisfied. All these cases will be discussed one by one. 

\begin{Lemma}\label{lemmarao1}
     Assume that $n\geq 2$ and $g_n(u,w):=\sum_{i=1}^{n} iw^{i-1}u^{n-i}.$ Then
      \begin{equation}
  g_n(w,u)-g_n(u,w) =
    \begin{cases}
     \sum_{i=1}^{\frac{n}{2}}(n+1-2i)(uw)^{i-1} (u^{n+1-2i}-w^{n+1-2i}), & \text{if $n$ is even }\\ \\
      \sum_{i=1}^{\frac{(n-1)}{2}}(n+1-2i)(uw)^{i-1} (u^{n+1-2i}-w^{n+1-2i}),& \text{if $n$ is odd.} 
    \end{cases}   
    \label{eqnrao3.1}
\end{equation}
    \end{Lemma}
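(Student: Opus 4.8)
The plan is to collapse $g_n(w,u)-g_n(u,w)$ into a single sum over a common monomial family and then exploit an antisymmetry in its coefficients. First I would read off from the definition that $g_n(w,u)=\sum_{i=1}^{n} i\,u^{i-1}w^{n-i}$, and reindex the other term $g_n(u,w)=\sum_{i=1}^{n} i\,w^{i-1}u^{n-i}$ via the substitution $i\mapsto n+1-i$, which rewrites it as $\sum_{i=1}^{n}(n+1-i)\,u^{i-1}w^{n-i}$. Subtracting term by term over the shared family $\{u^{i-1}w^{n-i}\}_{i=1}^{n}$ then yields
\[
g_n(w,u)-g_n(u,w)=\sum_{i=1}^{n}(2i-n-1)\,u^{i-1}w^{n-i}.
\]

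The key observation is that the coefficient $c_i:=2i-n-1$ is antisymmetric under $i\mapsto n+1-i$, that is $c_{n+1-i}=-c_i$, while the same substitution sends the monomial $u^{i-1}w^{n-i}$ to its $u\leftrightarrow w$ swap $u^{n-i}w^{i-1}$. I would therefore pair the $i$-th term with the $(n+1-i)$-th term; a short computation shows each such pair equals $(n+1-2i)\bigl(u^{n-i}w^{i-1}-u^{i-1}w^{n-i}\bigr)$. Since $i-1\leq n-i$ throughout the lower half of the index range, one may factor out $(uw)^{i-1}$ to obtain exactly the summand $(n+1-2i)(uw)^{i-1}\bigl(u^{n+1-2i}-w^{n+1-2i}\bigr)$ appearing in (\ref{eqnrao3.1}).

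It then remains only to count the pairs according to the parity of $n$. When $n$ is even the map $i\mapsto n+1-i$ has no fixed index, so the $n$ terms split into $n/2$ pairs indexed by $i=1,\dots,n/2$, which is the first line of (\ref{eqnrao3.1}). When $n$ is odd the central index $i=(n+1)/2$ is fixed, but there $c_{(n+1)/2}=0$, so that term vanishes; the surviving terms form $(n-1)/2$ pairs indexed by $i=1,\dots,(n-1)/2$, giving the second line.

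The argument is entirely elementary, so the only genuine difficulty is bookkeeping: performing the reindexing $i\mapsto n+1-i$ correctly, checking the antisymmetry $c_{n+1-i}=-c_i$, and, above all, managing the parity split — in particular noticing that the lone unpaired central term in the odd case carries coefficient zero and therefore drops out. An induction on $n$ would also succeed, but the pairing argument is more transparent and requires no separate base cases.
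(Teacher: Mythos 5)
Your proof is correct and follows essentially the same route as the paper's: both collapse the difference into the single sum $\sum_{i=1}^{n}(n+1-2i)\,w^{i-1}u^{n-i}$ by the reindexing $i\mapsto n+1-i$ and then pair the $i$-th term with the $(n+1-i)$-th (the paper phrases this as splitting the sum at $n/2$, resp.\ $(n-1)/2$, and reindexing the upper half, with the vanishing middle coefficient in the odd case handled implicitly where you make it explicit). The only cosmetic difference is that you run one unified computation with a parity split at the end, while the paper writes out the even and odd cases separately.
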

\begin{Proof}
 \textbf{Case 1:} $n$ is even. Now
\begin{align}
  g_n(w,u)-g_n(u,w) &= \sum_{i=1}^{n} iu^{i-1}w^{n-i}-\sum_{i=1}^{n} iw^{i-1}u^{n-i}\nonumber\\
 &= \sum_{i=1}^{n} (n+1-i)u^{n-i}w^{i-1}-\sum_{i=1}^{n} iw^{i-1}u^{n-i}\nonumber\\
&=\sum_{i=1}^{n} (n+1-2i)w^{i-1}u^{n-i}\nonumber\\
&=\sum_{i=1}^{\frac{n}{2}} (n+1-2i)w^{i-1}u^{n-i}+\sum_{i=\frac{n}{2}+1}^{n} (n+1-2i)w^{i-1}u^{n-i}\nonumber\\
&=\sum_{i=1}^{\frac{n}{2}} (n+1-2i)w^{i-1}u^{n-i}-\sum_{i=1}^{\frac{n}{2}} (n+1-2i)u^{i-1}w^{n-i}\nonumber\\
&=\sum_{i=1}^{\frac{n}{2}} (n+1-2i)(w^{i-1}u^{n-i}-u^{i-1}w^{n-i})\nonumber\\
&=\sum_{i=1}^{\frac{n}{2}}(n+1-2i)(uw)^{i-1} (u^{n+1-2i}-w^{n+1-2i}) \nonumber\\
&=\sum_{i=0}^{\frac{n-2}{2}}(n-1-2i)(uw)^{i} (u^{n-1-2i}-w^{n-1-2i}). \label{eqn3.01} 
\end{align}

 \textbf{Case 2:} $n$ is odd. Now
\begin{align}
  g_n(w,u)-g_n(u,w) &= \sum_{i=1}^{n} iu^{i-1}w^{n-i}-\sum_{i=1}^{n} iw^{i-1}u^{n-i}\nonumber\\
 &= \sum_{i=1}^{n} (n+1-i)w^{i-1}u^{n-i}-\sum_{i=1}^{n} iw^{i-1}u^{n-i}\nonumber\\
&=\sum_{i=1}^{\frac{n-1}{2}} (n+1-2i)u^{n-i}w^{i-1}+\sum_{i=\frac{n+3}{2}}^{n} (n+1-2i)u^{n-i}w^{i-1}\nonumber\\
&=\sum_{i=1}^{\frac{n-1}{2}} (n+1-2i)u^{n-i}w^{i-1}-\sum_{i=1}^{\frac{n-1}{2}} (n+1-2i)u^{i-1}w^{n-i}\nonumber\\
&=\sum_{i=1}^{\frac{n-1}{2}} (n+1-2i)w^{i-1}u^{n-i}-\sum_{i=1}^{\frac{n-1}{2}} (n+1-2i)u^{i-1}w^{n-i}\nonumber\\
&=\sum_{i=1}^{\frac{n-1}{2}} (n+1-2i)(w^{i-1}u^{n-i}-u^{i-1}w^{n-i})\nonumber\\
&=\sum_{i=1}^{\frac{n-1}{2}}(n+1-2i)(uw)^{i-1} (u^{n+1-2i}-w^{n+1-2i})\nonumber\\
&=\sum_{i=0}^{\frac{n-3}{2}}(n-1-2i)(uw)^{i} (u^{n-1-2i}-w^{n-1-2i}). \label{3.13}
\end{align}
\end{Proof}\\
Equivalent forms (\ref{eqn3.01}) - (\ref{3.13}) of (\ref{eqnrao3.1}) will be useful later and hence presented here. 
\begin{Lemma}{\label{eq2.2}}
Let $\delta$ be the involution defined accordingly for different cases and $w:=\delta(u)$, $f(u)=(u-u_0)(k-u)$. Then: 
\begin{itemize}
    \item [$(a)$] $uf(u)+wf(w)<0$ for the cases \rm(i) $0<u_0<k<2u_0 $  and \rm(ii) $u_0<-k,$
    \item [$(b)$] $uf(u)+wf(w)>0$ for $-k<u_0<0.$
\end{itemize}
\end{Lemma}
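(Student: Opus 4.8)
The plan is to reduce the whole statement to the sign of a single, explicitly factored cubic in the variable $S := u + w$.

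First I would record that $\Phi'(u) = \beta\, u(u-u_0)(k-u) = \beta\, u f(u)$, so that $uf(u) + wf(w) = \beta^{-1}\big(\Phi'(u) + \Phi'(w)\big)$; since $\beta>0$, it suffices to determine the sign of $\Phi'(u)+\Phi'(w)$. The only information the involution carries is the single relation $\Phi(u)=\Phi(w)$ with $u\neq w$. Dividing $\Phi(u)-\Phi(w)=0$ by $u-w$ converts it into a symmetric polynomial relation; written in the elementary symmetric variables $S=u+w$ and $Q=uw$ it is linear in $Q$, so I can solve for $Q$ in terms of $S$. Substituting this into $\Phi'(u)+\Phi'(w)$ (also symmetric in $u,w$) I expect the clean closed form
\begin{equation*}
uf(u)+wf(w)=\frac{S}{2}\,(S-2k)(S-2u_0),\qquad S=u+w.
\end{equation*}
The structural point is that the three roots of this cubic in $S$ are exactly twice the three equilibria $0,\,k,\,u_0$; in particular $S=2\hat a$ (twice the center $\hat a$) is always a root, as it must be, since the left-hand side vanishes when $u=w=\hat a$ because $\Phi'(\hat a)=0$. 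Carrying out this elimination carefully is the main computational obstacle, though it is mechanical.

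With the formula in hand only the position of $S$ relative to the roots $0,\,2u_0,\,2k$ remains. The factor $S-2k$ is negative in every case, since $u<k$ and $w<k$ force $S<2k$. To fix the sign of $S-2\hat a$ I would compare $w=\delta(u)$ with the reflection $2\hat a-u$ by studying the auxiliary function $m(u):=\Phi(u)-\Phi(2\hat a-u)$. Because $\Phi$ is a quartic and $\hat a$ is a critical point, $m$ is an explicit cubic monomial: $m(u)=\tfrac{2\beta(k-2u_0)}{3}(u-u_0)^3$ when $\hat a=u_0$, and $m(u)=\tfrac{2\beta(k+u_0)}{3}u^3$ when $\hat a=0$. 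The parameter factor has a fixed sign in each case ($k-2u_0<0$ in case (i); $k+u_0<0$ for $u_0<-k$; $k+u_0>0$ for $-k<u_0<0$), so $m$ has a definite sign on the side of the center where $u$ lives. Combining this with the strict monotonicity of $\Phi$ on the interval joining the center to the adjacent equilibrium forces $\delta(u)$ to lie on a definite side of $2\hat a-u$, which fixes the sign of $S-2\hat a$.

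Finally I would assemble the three signs. In case (i) I expect $2u_0<S<2k$ with $S>0$, so the product is $(+)(-)(+)<0$; for $u_0<-k$ I expect $0<S<2k$, again $(+)(-)(+)<0$; and for $-k<u_0<0$ I expect $2u_0<S<0$, giving $(-)(+)(-)>0$. These are precisely assertions (a) and (b). The one delicate point is the reflection step: the reflected point $2\hat a-u$ need not remain inside the involution interval, so I must verify it still lies in the larger interval on which $\Phi$ is strictly monotone (for instance, in case (i) the reflection can drop below $w_1$ but stays above $0$, and $\Phi$ is monotone on all of $(0,u_0)$), which is what keeps the comparison valid.
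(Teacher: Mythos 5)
Your proposal is correct, and it takes a genuinely different route from the paper. I verified your key identity: writing the constraint $\Phi(u)=\Phi(w)$, divided by $u-w$, as a polynomial $C(S,Q)$ in $S=u+w$, $Q=uw$, one has the exact polynomial identity
\begin{equation*}
uf(u)+wf(w)\;=\;\frac{S}{2}\,(S-2k)(S-2u_0)\;-\;6\,C(S,Q),
\end{equation*}
so your factored formula holds everywhere on the involution curve --- even at points where the coefficient of $Q$ in $C$ vanishes and ``solving for $Q$'' is not literally possible (worth a remark in a written version). The paper proceeds quite differently: for case (a)(i) it uses the constraint to rewrite $uf(u)+wf(w)$ as a two-variable function $F(u,w)$, confines the involution curve to a triangular region $R$ via the tangent line $u+w=2u_0$ and convexity at $(u_0,u_0)$, and then bounds $F$ by checking the corners, the three boundary edges, and interior critical points of $R$; the remaining cases are dismissed as ``similar.'' Your elimination replaces that two-dimensional constrained optimization by a one-variable sign check of a factored cubic, handles all three cases uniformly, and makes transparent both why the expression vanishes at the center ($S=2\hat a$ is always a root) and why the case $k=2u_0$ degenerates to zero. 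The paper does contain a cousin of your reflection step: its Lemma 3.3 obtains the sign of $u+w$ from the identity $(u+w)(f(u)+f(w))=-\frac{(k+u_0)}{3}(u-w)^2$, whereas you obtain the sign of $S-2\hat a$ from the exact odd part $\Phi(u)-\Phi(2\hat a-u)=\frac{\Phi'''(\hat a)}{3}(u-\hat a)^3$ together with strict monotonicity of $\Phi$ on the reflected side; both are sound, and yours also delivers the $S-2u_0$ comparison in case (a)(i), which the paper instead gets from the region-$R$ geometry. Two small points: your sign triple ``$(-)(+)(-)$'' for $-k<u_0<0$ lists the factors in a different order than your displayed formula (in the order $S$, $S-2k$, $S-2u_0$ it is $(-)(-)(+)$, still positive); and in that case the containment check you flagged amounts to $u_1\le -u_0$, which indeed follows from $\Phi(-u_0)>\Phi(u_0)=\Phi(u_1)$ and the monotonicity of $\Phi$ on $(0,k)$, so the delicate point closes cleanly.
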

\begin{Proof}
$(a)$\rm(i) Recall that  $\delta$ is the involution such that $\phi(u)=\phi(\delta(u)).$
Then 
\begin{equation}{\label{eq2.7}}
\frac{1}{4}(u^2+w^2)(u+w)-\frac{(k+u_0)}{3}(u^2+uw+w^2)+\frac{ku_0}{2}(u+w)=0. 
\end{equation}
In view of the equation (\ref{eq2.7}), we have $\frac{dw}{du}|_{(u_0,u_0)}=-1$ and $\frac{d^2w}{du^2}|_{(u_0,u_0)}=\frac{4(k-2u_0)}{3u_0(u_0-k)}>0.$ Then $\delta(u)$ is concave upward at $u=u_0$. Note that the equation of the tangent to the curve $w=\delta(u)$ given by (\ref{eq2.7}) at $(u_0,u_0)$ is $u+w=2u_0.$ Let $R$ be the triangular region bounded by $u+w=2u_0,$ $w=u_0$ and $u=k$. The intersection of the curve (\ref{eq2.7}) and the tangent $u+w=2u_0$ is the only one point $(u_0,u_0)$. This clearly indicates that the curve is contained in the region $R$ for $u_0<u<k<2u_0.$

Let us obtain the maximum and minimum values of $uf(u)+wf(w)$ subject to the constraint (\ref{eq2.7}) for all $(u,w)$ in the triangular region $R$. 

Rewriting (\ref{eq2.7}), we have
\begin{eqnarray}
(k+u_0)(u^2+w^2)=(u+w)\left\{\frac{3}{4}(u^2+w^2)+\frac{3ku_0}{2}\right\}-(k+u_0)uw. \label{eqn3.9}
\end{eqnarray}
Now
\begin{eqnarray}
 uf(u)+wf(w) &=& -(u^3+w^3)+(k+u_0)(u^2+w^2)-ku_0(u+w)\nonumber\\
  &=&-(u^3+w^3)+(u+w)\left\{\frac{3}{4}(u^2+w^2)+\frac{3ku_0}{2}\right\}-(k+u_0)uw-ku_0(u+w)\nonumber\\
  &&~~~~~~~~~~~~~~~~~~~~~~~~~~~~~~~~~~~({\rm in \,\, view \,\, of \,\,(\ref{eqn3.9})})\nonumber\\
 &=&-(u+w)(u^2-uw+w^2)+(u+w)\left\{\frac{3}{4}(u^2+w^2)\right\}\nonumber\\
&~~&-(k+u_0)uw+\frac{ku_0}{2}(u+w)\nonumber\\
&=&(u+w)\left\{-\frac{1}{4}(u^2+w^2)\right\}+uw(u+w)+\frac{ku_0}{2}(u+w)-(k+u_0)uw\nonumber\\
&=&-\frac{(k+u_0)}{3}(u^2+uw+w^2)+\frac{ku_0}{2}(u+w)+uw(u+w)\nonumber\\
&~~& +\frac{ku_0}{2}(u+w)-(k+u_0)uw  \text{~~~(using (\ref{eq2.7}))}\nonumber\\
&=& -\frac{(k+u_0)}{3}(u^2+4uw+w^2)+(ku_0+uw)(u+w)\nonumber\\
&=& -\frac{(k+u_0)}{3}(u+w)^2+(ku_0+uw)(u+w)-\frac{2(k+u_0)}{3}uw\nonumber\\
&=&(u+w)\left\{-\frac{(k+u_0)}{3}(u+w)+ku_0+uw\right\}-\frac{2}{3}(k+u_0)uw.\nonumber\\
&:=& F(u,w).
\end{eqnarray}

To prove the main result, it is sufficient to show that $F(u,w)<0$ in $R.$  Note that the function $F(u,w)$ attains its maximum and minimum values on boundary of the triangular region $R$ or at the critical points inside $R$. The corner points of $R$ are $(u_0,u_0),$ $(k,u_0)$ and $(k,2u_0-k).$ Further $F(u_0,u_0)=0,$ $F(k,2u_0-k)=-\frac{2}{3}(2u_0-k)(u_0-k)^2<0$ and $F(k, u_0)=-\frac{1}{3}(u_0+k)(u_0-k)^2<0.$ \\
\textbf{On the boundary $u=k$:}\\
Note that $F(k,w)=-\frac{(k+u_0)}{3}(k^2+4kw+w^2)+(ku_0+kw)(k+w).$ At the critical points,  $\frac{dF}{dw}=0$. This implies that $w=\frac{k(k+u_0)}{2(2k-u_0)}$ and $F\left(k,\frac{k(k+u_0)}{2(2k-u_0)}\right)=\frac{3k^2(k-u_0)^2}{4(u_0-2k)}<0.$\\
\textbf{On the boundary $w=u_0$:}\\
Here $F(u,u_0)=-\frac{(k+u_0)}{3}(u^2+4uu_0+u_0^2)+(ku_0+uu_0)(u+u_0).$ At the critical points, 
 $\frac{dF}{du}=0$. This implies that $u=\frac{u_0(k+u_0)}{2(2u_0-k)}$ and $F\left(\frac{u_0(k+u_0)}{2(2u_0-k)},u_0\right)=-\frac{3u_0^2(k-u_0)^2}{4(2u_0-k)}<0.$\\
\textbf{On the boundary $u+w=2u_0$:}\\
In this case, the only critical point is $(u_0,u_0)$ and  $F(u_0,u_0)=0.$\\
\textbf{In the interior of the region $R$:}\\
Let us find the critical points of $F(u,w)$ in the interior of $R.$ At the critical points of $F(u,w),$ we have $F_u(u,w)=0$ and $F_w(u,w)=0.$ These equations imply that $(u-w)(u+w-\frac{2(k+u_0)}{3})=0.$ The critical points are on the lines $u-w=0$ or $u+w-\frac{2(k+u_0)}{3}=0$ or both.
The line $u+w-\frac{2(k+u_0)}{3}=0$ is parallel to the line $u+w=2u_0$ and stays  left to the line $u+w=2u_0$ as $k<2u_0.$ Therefore no point of the line $u+w-\frac{2(k+u_0)}{3}=0$ belongs to the region $R.$ If we consider the line $u-w=0$,  then only one point $(u_0,u_0)$ of this line belongs to $R$. But $F_u(u_0,u_0)\neq 0$ and $F_w(u_0,u_0)\neq 0.$ Thus the $F(u,w)$ has no critical points interior to the region $R$ and the maximum and minimum values of $F$ are attained on the boundary of $R$. Therefore  $F(u,w)\leq 0$ for all $(u,w) \in R$ and $F(u,w)< 0$ for $0<u_0<u<k<2u_0.$ This, in turn, implies that $uf(u)+wf(w)<0.$

Similarly,  we can prove the other cases also. 
\end{Proof}\\
Define  $$G_{n}(h):=\frac{\oint_{\Gamma_h}\,u^n y\, du}{\oint_{\Gamma_h}\,y\, du},$$ 
where $\Gamma_h$ is the trajectory $H(u,y)=h$ of the unperturbed system from the periodic annulus. 
\subsection{Monotonicity of $G_{n}(h)$ for $0<u_0<k\leq2u_0$}
In this subsection we prove the monotonicity of the function $G_n(h)$. This result will be useful 
for proving the existence of at most one limit cycle around the fixed point $(u_0,0)$ for the perturbed system (\ref{kpr2.5}).  Do note that $(u_0,0)$ is a center for the unperturbed system (\ref{kpr1.17}) when $0<u_0<k\leq2u_0$ and $\Gamma_h$
are the trajectories of the unperturbed system from the periodic annulus around the fixed point $(u_0,0).$ Let us discuss the involution function $\delta(u)$ for the cases $0<u_0<k=2u_0$ and 
$0<u_0<k<2u_0.$\\
Case (i) Let $k=2u_0.$ Here $\Phi \leq 0$ on the interval $[0,k=2u_0]$. Further $\Phi$ has local maxima at $u=0$ and $u=2u_0$ and local minimum at $u=u_0.$ Clearly 
$$
\Phi(0)=\Phi(2u_0)=0, \,\, \Phi(u_0)=-\frac{\beta}{4}u_0^4 <0.
$$
It is easy to see that there is an involution $\delta(u)$ defined for all $u \in (0,2u_0)$.
Further, for $u \in (u_0,2u_0),$ we have $\delta(u)=w \in (0,u_0).$ 
See Figure \ref{kpinv2}. \\
Case (ii) Let  $0<u_0<k <2u_0.$ Here $\Phi(u)\leq 0$ on $[0,k].$ Further $\Phi$ has local maxima at $u=0$ and $u=k$ whereas local minimum at $u=u_0.$ Clearly 
\begin{eqnarray}
\Phi(0)=0, \,\, \Phi(u_0)=-\frac{\beta}{4} u_0^4 <0, \,\, \Phi(k)=\frac{\beta k^3}{12} (k-2u_0)<0.\nonumber\\
\end{eqnarray}
Let $w_1 \in (0,u_0)$ be such that $\Phi(w_1)=\Phi(k).$ It is easy to see that there is an involution function $\delta(u)$ defined for all $u \in (w_1,k)$. Further, for $u\in(u_0,k)$, we have $w=\delta(u) \in (w_1,u_0).$  See Figure \ref{kpinv}. 

Let us prove now some important results.  

\begin{Proposition}\label{lemma3.2}
Let $n \geq 2$ be an integer. Then the function  $G_{n}(h)$ is monotone when $0<u_0<k \leq 2u_0.$ 
\end{Proposition}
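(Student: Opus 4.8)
The plan is to apply Theorem~\ref{thmc} with $\tilde g_n(u)=u^n$ and center $\hat a=u_0$, which reduces the monotonicity of $G_n(h)$ to the strict monotonicity of
$$T_n(u)=(n+1)\frac{\int_{\delta(u)}^{u} t^n\,dt}{\int_{\delta(u)}^{u}dt}=\frac{u^{n+1}-w^{n+1}}{u-w},\qquad w:=\delta(u),$$
on $(u_0,B)$, where $B=k$ when $0<u_0<k<2u_0$ and $B=2u_0$ when $k=2u_0$ (the prefactor $(n+1)$ cancels the $1/(n+1)$ from the numerator integral). Writing $T_n(u)=\sum_{j=0}^{n}u^{n-j}w^{j}$ and differentiating, I expect the product rule to reorganize the two resulting sums exactly into the polynomials of Lemma~\ref{lemmarao1}, yielding the clean identity
$$T_n'(u)=g_n(w,u)+\delta'(u)\,g_n(u,w).$$

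Next I would compute $\delta'(u)$ by differentiating the defining relation $\Phi(u)=\Phi(w)$ of the involution, which gives $\delta'(u)=\Phi'(u)/\Phi'(w)=uf(u)/\big(wf(w)\big)$ with $f(u)=(u-u_0)(k-u)$ (the common factor $\beta$ cancels). Since $u\in(u_0,B)$ forces $w=\delta(u)\in(0,u_0)$, one has $f(w)<0$ and hence $wf(w)<0$; multiplying $T_n'(u)$ by this negative quantity reverses the target inequality, so it suffices to fix the sign of
$$\Xi(u,w):=wf(w)\,g_n(w,u)+uf(u)\,g_n(u,w).$$

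The decisive step is the symmetric decomposition
$$\Xi=\tfrac12\Big[\big(uf(u)+wf(w)\big)\big(g_n(u,w)+g_n(w,u)\big)+\big(uf(u)-wf(w)\big)\big(g_n(u,w)-g_n(w,u)\big)\Big],$$
which is tailored to bring in the two preliminary lemmas. For the first product, Lemma~\ref{eq2.2} gives $uf(u)+wf(w)<0$ when $0<u_0<k<2u_0$ (and $=0$ in the symmetric borderline case $k=2u_0$, where $\delta(u)=2u_0-u$ so the sum is $0$ by direct computation), while $g_n(u,w)+g_n(w,u)>0$ because $u,w>0$; thus this product is $\le 0$. For the second product, $uf(u)>0>wf(w)$ on our range gives $uf(u)-wf(w)>0$, whereas Lemma~\ref{lemmarao1} shows $g_n(w,u)-g_n(u,w)>0$ for $u>w>0$ (each summand carries the sign of $u^{n+1-2i}-w^{n+1-2i}>0$), so $g_n(u,w)-g_n(w,u)<0$ and this product is $<0$. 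Hence $\Xi<0$, and since $wf(w)<0$ this yields $T_n'(u)>0$ on $(u_0,B)$; strict monotonicity of $T_n$ then transfers to $G_n(h)$ through Theorem~\ref{thmc}.

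The main obstacle is precisely this sign analysis: the raw expression $T_n'(u)=g_n(w,u)+\delta'(u)g_n(u,w)$ combines a positive term with a negative one and admits no obvious sign, so the whole difficulty is funneled into finding the correct regrouping. The symmetric splitting above is what makes Lemma~\ref{lemmarao1} (controlling $g_n(w,u)-g_n(u,w)$) and Lemma~\ref{eq2.2} (controlling $uf(u)+wf(w)$) applicable at the same time, and it is also what lets the degenerate case $k=2u_0$ — where $uf(u)+wf(w)$ collapses to $0$ — go through unchanged, since the surviving second product remains strictly negative.
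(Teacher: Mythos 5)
Your proof is correct, and it rests on the same skeleton as the paper's: Theorem \ref{thmc}, the derivative identity $T_n'(u)=\frac{1}{wf(w)}\{g_n(u,w)uf(u)+g_n(w,u)wf(w)\}$, and Lemmas \ref{lemmarao1} and \ref{eq2.2}. The differences are in the endgame. For $0<u_0<k<2u_0$ the paper skips any polarization identity: from $0<uf(u)<-wf(w)$ and $0<g_n(u,w)<g_n(w,u)$ it simply chains $g_n(u,w)uf(u)<g_n(u,w)(-wf(w))<g_n(w,u)(-wf(w))$, which is equivalent in content to your symmetric splitting of $\Xi$ into $\tfrac12(uf(u)+wf(w))(g_n(u,w)+g_n(w,u))+\tfrac12(uf(u)-wf(w))(g_n(u,w)-g_n(w,u))$; your regrouping buys nothing extra there but is perfectly valid. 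The more substantive divergence is the borderline case $k=2u_0$: the paper handles it by an entirely separate induction, using the explicit involution $\delta(u)=2u_0-u$, the constancy $T_1\equiv 2u_0$, and the recursion $T_{n+1}(u)=uT_n(u)+w^{n+1}$ with $dw/du=-1$ to get $T_{n+1}'(u)=uT_n'(u)+\sum_{i=0}^{n-1}w^i(u^{n-i}-w^{n-i})>0$, whereas you subsume it into the same decomposition by noting that $wf(w)=-uf(u)$ when $w=2u_0-u$, so the first product vanishes identically while the second remains strictly negative (its leading term being $-(n-1)(u^{n-1}-w^{n-1})\,(uf(u)-wf(w))/2<0$). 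That unification is a genuine streamlining, and you were right to verify $uf(u)+wf(w)=0$ by direct computation rather than by citing Lemma \ref{eq2.2}, since part (a) of that lemma is stated only for the strict inequality $k<2u_0$. All supporting details check out: $\delta'(u)=\Phi'(u)/\Phi'(w)=uf(u)/(wf(w))$ after the factor $\beta$ cancels, $w=\delta(u)\in(0,u_0)$ (respectively $(w_1,u_0)$) gives $wf(w)<0$, and strict positivity of $T_n'$ then transfers to monotonicity of $G_n(h)$ exactly as in the paper.
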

\begin{Proof}
 Here we prove the theorem in two cases \rm(i) $0<u_0<k=2u_0,$ and \rm(ii) $0<u_0<k<2u_0.$\\
{\bf{Case 1:}} Here  $0<w<u_0<u<k=2u_0$.

In this case $\Phi(u)=-\frac{\beta}{4} u^2 (u-2u_0)^2$ and the equation $\Phi(u)=\Phi(w)$ implies that $u^2 (u-2u_0)^2=w^2 (w-2u_0)^2.$ This equation, in turn, leads to 
 the involution $\delta(u)=2u_0-u:=w.$ Let
\begin{align}{\label{kp1}}
T_{n}(u):=(n+1)\frac{\int_{\delta(u)}^{u} t^n \,dt }{\int_{\delta(u)}^{u}\,dt }=u^n+u^{n-1}w+...+w^n.
\end{align}
Then
\begin{eqnarray}
 T_{n+1}(u)&=&uT_{n}(u)+w^{n+1}.\nonumber
 \end{eqnarray}
 Differentiating with respect to $u$, we have
 \begin{eqnarray}
  T_{n+1}'(u) &=& uT_{n}'(u)+T_{n}(u)+(n+1)w^{n}\frac{dw}{du}\nonumber\\
 &=&uT_{n}'(u)+T_{n}(u)-(n+1)w^{n} \,\, {\rm because} \,\, \frac{dw}{du}=-1. \label{3.2}
\end{eqnarray}
We  prove that  $$T_{n}'(u)>0, \text{~~where } u_0<u<2u_0.$$
Using the definition of $T_n,$
\begin{eqnarray}
 T_{n}(u)-(n+1)w^{n}&=&\left(\sum_{i=0}^{n} u^{n-i}w^{i}\right)-(n+1)w^{n}\nonumber\\
 &=& \left(\sum_{i=0}^{n-1} u^{n-i}w^{i}\right)-nw^{n}\nonumber\\
 &=&\sum_{i=0}^{n-1} w^{i}(u^{n-i}-w^{n-i}). \label{eqn2.3}
\end{eqnarray}
Equations (\ref{3.2})-(\ref{eqn2.3}) imply that 
\begin{equation} \label{eqn2.4}
    T_{n+1}'(u)=uT_{n}'(u)+ \sum_{i=0}^{n-1} w^{i}(u^{n-i}-w^{n-i}).
\end{equation} Again, by the definition of $T_n,$ 
\begin{eqnarray}  
T_{1}(u)&=&u+w\nonumber\\
 &=& u+(2u_0-u)\,\, {\rm since} \,\, w=2u_0-u \nonumber\\
 &=&2u_0. \label{eqn2.03}
\end{eqnarray}
In view of (\ref{eqn2.4}),  we have
\begin{eqnarray}
 T_{2}'(u)&=&uT_{1}'(u)+ \sum_{i=0}^{0} w^{i}(u^{1-i}-w^{1-i})\nonumber\\
 &=& 0+u-w>0 {\rm ~~since} \,\, u>w. \nonumber
 \end{eqnarray}
 Therefore
 \begin{eqnarray}
 T_{2}'(u)&>&0. \label{eqn02.3}
\end{eqnarray}

Assume that $T_{n}'(u)>0$  for a natural number $n \geq 2.$ Then 
\begin{equation} \label{eqn2.04}
 T_{n+1}'(u)=uT_{n}'(u)+ \sum_{i=0}^{n-1} w^{i}(u^{n-i}-w^{n-i})>0.
\end{equation}

Using mathematical induction, we have $T_{n}'(u)>0$ for all $n\geq2.$ This implies that the function $G_n(h)$ is monotone due to Theorem \ref{thmc}.\\
{\bf{Case 2:}}  Here $0<w_{1}<w<u_0<u<k<2u_0$. Recall that 
$$T'_n(u)=\frac{1}{wf(w)}\{g_n(u,w)uf(u)+g_n(w,u)wf(w)\}, \,\, g_n(u,w):=\sum_{i=1}^{n} iw^{i-1}u^{n-i}.$$
In view of Lemmas \ref{lemmarao1} - \ref{eq2.2} , we have  
$g_{n}(w,u)>g_{n}(u,w)>0$ as $u>w$ and $0<uf(u)<-wf(w),$ respectively.
These two inequalities lead to  $g_n(u,w)uf(u)+g_n(w,u)wf(w)<0$. Note that we have $wf(w)<0$ when $0<w_{1}<w<u_{0}<u<k<2u_{0}.$ Thus we have 
$T_{n}^{\prime}(u)>0$ for $n\geq2$ and, in turn, we have that the function  $G_{n}(h)$ is monotone due to Theorem \ref{thmc}.
\end{Proof}

\noindent\textbf{Remark:} When $n>1$ and $k>2u_0,$ we cannot conclude anything because the function $g_n(u,w)uf(u)+g_n(w,u)wf(w)$ may attain both positive and negative values. 

For $n=2$, $u_0=1$ and $k=2.1,$ $\phi(u)=\phi(w)$ leads to 
\begin{equation}{\label{eq9.1}}
     \frac{1}{4}(u^2+w^2)(u+w)-\frac{3.1}{3}(u^2+uw+w^2)+\frac{2.1}{2}(u+w)=0,
\end{equation}
where $w\in (0,1).$ 

Now when $w=0.1,$ solving equation (\ref{eq9.1}) numerically, we get $u=1.77836.$ For these values of $u$ and $w$, the function $g_2(u,w)uf(u)+g_2(w,u)wf(w)$ has the value $0.22258.$

Again when $w=0.25,$ solving equation (\ref{eq9.1}) numerically, we get $u=1.69015.$
For these values of $u$ and $w$, the function  $g_2(u,w)uf(u)+g_2(w,u)wf(w)$ has  the value $-0.21221.$

Thus we cannot hope to have a result similar to Proposition \ref{lemma3.2} and the analysis used previously cannot be applied for this case. 
\subsection{Monotonicity of $G_{n}(h)$ for the cases $-k<u_0<0$ and $ u_0<-k$}
In this subsection we discuss the monotonicity of the function $G_n(h).$ Note that $\Gamma_h$ are the trajectories around $(0,0)$ of the unperturbed system (\ref{kpr1.17}) from the periodic annulus around $(0,0)$. Let us discuss the relevant involution function $\delta(u)$ for both the cases.\\
(i) Consider  $-k<u_0<0.$ Here 
\begin{eqnarray}
&&\Phi(u_0)=-\frac{\beta}{12}u_0^3 (2k-u_0) >0, \,\,
\Phi(k)=\frac{\beta k^3}{12} (k-2u_0) >0, \nonumber\\
&&\Phi(k)-\Phi(u_0)=\frac{\beta}{12} (k-u_0)^3 (k+u_0)>0.\nonumber
\end{eqnarray}
We have that the function $\Phi(u)$ is non-negative on the interval $[u_0,k]$. Further it has local minimum at $u=0$ and local maxima at $u=u_0$ and $u=k.$  Suppose that $u_1 \in (0,k)$ is such that $\Phi(u_1)=\Phi(u_0).$ Existence of such $u_1$ is assured as $\Phi(u)$ is strictly monotonically increasing on the interval $(0,k)$ and $0<\Phi(u_0)<\Phi(k).$ It is easy to see that there exists an involution $\delta(u)$ defined for all $u \in (u_0,u_1).$ Further, for $u\in (0,u_1),$ we have $w=\delta(u) \in (u_0,0).$ See Figure \ref{kpinv3}.\\
(ii) Consider $u_0 <-k.$ Here $\Phi \geq 0$ on the interval $[u_0,k].$ Further $\Phi$ has local maxima at 
$u=u_0$ and $u=k$ whereas it has local minimum at $u=0.$ Clearly
$$
\Phi(u_0)>0, \,\, \Phi(k)>0, \,\, \Phi(k)<\Phi(u_0).
$$
Suppose that $w_2 \in (u_0,0)$ is such that $\Phi(w_2)=\Phi(k).$ Then the relevant involution function $\delta(u)$ is defined for all $u \in (w_2,k)$. Further, for $u \in (0,k),$ we have $w=\delta(u) \in (w_2,0).$ 
See Figure \ref{kpinv4}.

Having understood clearly the involution function $\delta(u),$ we proceed to prove some important results. 
 \begin{Lemma} {\label{lem01}}
Let $\delta (u)$ be the involution and $f(u)=(u-u_0)(k-u).$ Then \rm(i) $u+\delta(u)<0$ when $-k<u_0<0,$ and \rm(ii) $u+\delta(u)>0$ when $u_0<-k.$
 \end{Lemma}
 \begin{Proof}
Let $w:=\delta(u).$ Note that $u_0<w<0<u<k.$ 
In view of the definition of $\delta (u),$ we have  
 $\Phi(u)=\Phi(w)$ and thus  
\begin{eqnarray}
(u^2+w^2)(u+w)+2ku_0(u+w)=\frac{4(k+u_0)}{3}(u^2+uw+w^2).
\label{eq4.1}
\end{eqnarray}
Also $f(u)+f(w)=(u-u_0)(k-u)+(w-u_0)(k-w)>0$ since $u_0<w<0<u<k.$ \\
 Now
\begin{eqnarray}
 (u+w)(f(u)+f(w)) &=& (u+w)\{(u-u_0)(k-u)+(w-u_0)(k-w)\}\nonumber\\
 &=&-\{(u^2+w^2)(u+w)+2ku_0(u+w)\}+(u+w)^2(k+u_0)\nonumber\\
&=& -\frac{4(k+u_0)}{3}(u^2+uw+w^2)+(u+w)^2(k+u_0) (\text{ using (\ref{eq4.1})})\nonumber\\
&=&-\frac{(k+u_0)}{3} (u-w)^2. \label{eq4.2}
\end{eqnarray}

When $-k<u_0<0,$ we have $f(u)+f(w)>0$ and $u_0+k>0.$ Then $u+w<0$ due to (\ref{eq4.2}).

When $u_0<-k,$ we have $f(u)+f(w)>0$ and $u_0+k<0.$ Then $u+w>0$ due to (\ref{eq4.2}).
 \end{Proof}
 \begin{Lemma} {\label{lem1}}
The function $g_n(u,w)$ takes the form:
\begin{equation}
\begin{split}
   g_n(u,w)&=(u^{n-1}+w^{n-1})+2u^{n-2}w+\sum_{i=1}^{\frac{n-4}{2}}u^{2i}w^{n-1-2i} \\
   &~~~+(u+w)\sum_{i=0}^{\frac{n-4}{2}}(n-1-2i)u^{2i}w^{n-2-2i}, \label{3.17}
    \end{split}
\end{equation}
for all even integers $n \geq 6.$ 
Furthermore  {\rm(i)} $g_n(u,w)<0$  when $-k<u_0<0,$ and {\rm(ii)}  $g_n(w,u)>0$ when $u_0<-k$ for all even integers $n\geq 2$.
\end{Lemma}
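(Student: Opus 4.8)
The plan is to separate the two assertions of the lemma: first establish the algebraic identity (\ref{3.17}), and then read off the two signs term by term, using Lemma \ref{lem01} together with the parity of the exponents. Throughout I keep in mind the geometric setup of this subsection, namely that the involution $w=\delta(u)$ satisfies $u_0<w<0<u<k$, so that $u>0$ and $w<0$ in both cases.

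For the identity I would begin from the definition, rewritten as $g_n(u,w)=\sum_{i=1}^{n} i\,w^{i-1}u^{n-i}=\sum_{j=0}^{n-1}(j+1)\,w^{j}u^{n-1-j}$, so the coefficient of $w^{j}u^{n-1-j}$ is exactly $j+1$. I would then verify (\ref{3.17}) by expanding its right-hand side and checking that the coefficient of each monomial $u^{n-1-j}w^{j}$ equals $j+1$: the product $(u+w)\sum_{i=0}^{(n-4)/2}(n-1-2i)u^{2i}w^{n-2-2i}$ generates the bulk of these coefficients, while the isolated terms $u^{n-1}+w^{n-1}$, $2u^{n-2}w$ and $\sum_{i=1}^{(n-4)/2}u^{2i}w^{n-1-2i}$ adjust the remaining boundary coefficients (in particular upgrading the odd-exponent coefficients from $j$ to $j+1$ and fixing $j=0,1$ and $j=n-1$). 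This is a bookkeeping computation with no real difficulty; it may alternatively be organized as an induction on the even integer $n$, peeling off the top and bottom monomials at each step.

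The substance lies in the sign analysis, and here the decomposition (\ref{3.17}) has been engineered so that every summand has a determinate sign once the sign of $u+w$ is known. Since $n$ is even, $n-1$ is odd and $n-2$ is even. In case (i), $-k<u_0<0$, Lemma \ref{lem01} gives $u+w<0$, i.e. $u=|u|<-w=|w|$; then the odd-power block $u^{n-1}+w^{n-1}=|u|^{n-1}-|w|^{n-1}<0$ precisely because $|w|>|u|$, the term $2u^{n-2}w<0$ since $u^{n-2}>0$ and $w<0$, each term $u^{2i}w^{n-1-2i}$ is negative because $n-1-2i$ is odd and $w<0$, and the final block is $(u+w)<0$ times a sum that is positive (as $n-1-2i\geq 3>0$ and every exponent there is even). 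Hence $g_n(u,w)<0$. Case (ii), $u_0<-k$, is identical after the swap $u\leftrightarrow w$: Lemma \ref{lem01} now gives $u+w>0$, i.e. $|u|>|w|$, and applying (\ref{3.17}) with $u$ and $w$ interchanged makes each of the four blocks of $g_n(w,u)$ nonnegative, with at least one strictly positive, so $g_n(w,u)>0$.

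Two small points remain. The identity (\ref{3.17}) is stated for $n\geq 6$; the case $n=4$ is covered by the same formula under the convention that the empty sum $\sum_{i=1}^{0}$ vanishes, and $n=2$ I would treat directly, since $g_2(u,w)=u+2w=(u+w)+w<0$ in case (i) and $g_2(w,u)=w+2u=(u+w)+u>0$ in case (ii). The main obstacle is not any single hard estimate but rather carrying out the term-by-term sign bookkeeping without error: one must track the parity of every exponent (using that $n$ is even) and recognize that the only sign-indefinite piece, the odd-power pair $u^{n-1}+w^{n-1}$, is resolved exactly by the comparison between $|w|$ and $|u|$ that Lemma \ref{lem01} delivers through the sign of $u+w$.
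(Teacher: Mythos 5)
Your proposal is correct and takes essentially the same route as the paper: the same decomposition (\ref{3.17}) combined with the sign of $u+w$ from Lemma \ref{lem01} to make every block sign-definite, the only difference being that you verify the identity by matching the coefficient $j+1$ of $w^{j}u^{n-1-j}$ directly (absorbing $n=4$ via the empty-sum convention), whereas the paper checks $n=4$ and $n=6$ by hand and then inducts from $m$ to $m+2$ using $g_{m+2}(u,w)=u^{2}g_{m}(u,w)+(m+1)uw^{m}+(m+2)w^{m+1}$. Your explicit justification that $u^{n-1}+w^{n-1}=|u|^{n-1}-|w|^{n-1}<0$ because $u+w<0$ forces $|w|>|u|$ is precisely the step the paper states tersely as ``$u^{p}+w^{p}<0$ for odd $p$,'' and your $u\leftrightarrow w$ swap for case (ii) matches the paper's ``similar arguments'' remark.
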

\begin{Proof}
Let  $-k <u_0<0.$ Then we have $(u+w)<0$ by Lemma \ref{lem01}. Also note that $u_0<w<0<u<u_{1}<k$. First we  prove that $g_n(u,w)<0$ for  $n=2$ and $n=4$. \\
    For $n=2,$ $$g_2(u,w)=\sum_{i=1}^{2} iw^{i-1}u^{2-i}=u+2w=(u+w)+w<0.$$
    For $n=4,$
    \begin{align*}
  g_4(u,w)=\sum_{i=1}^{4} iw^{i-1}u^{4-i} &= u^3+2wu^2+3w^2u+4w^3\nonumber\\
 &= (u^3+w^3)+2u^2w+3(u+w)w^2.\nonumber\\
\end{align*}
As all the terms  $(u^3+w^3)$, $2u^2w$, and $3(u+w)w^2$ are negative, we have  $g_4(u,w)<0.$

For $n=6,$
\begin{align*}
  g_{6}(u,w) &= u^5+2wu^4+3w^2u^3+4w^3u^2+5w^4u+6w^5\nonumber\\
 &= u^5+w^5+2u^4w+u^2w^3+(u+w)(5w^4+3u^2w^2)\nonumber\\
 &=(u^{6-1}+w^{6-1})+2u^{6-2}w+\sum_{i=1}^{\frac{6-4}{2}}u^{2i}w^{6-1-2i} \nonumber\\
   &~~~+(u+w)\sum_{i=0}^{\frac{6-4}{2}}(6-1-2i)u^{2i}w^{6-2-2i}. 
\end{align*}
Therefore the statement (\ref{3.17}) is true for $n=6.$ 
Assume that the statement (\ref{3.17}) is true for $n=m,$ where $m \geq 6$ is an even integer. Then 
\begin{align}
  g_{m}(u,w) &= (u^{m-1}+w^{m-1})+2u^{m-2}w+\sum_{i=1}^{\frac{m-4}{2}}u^{2i}w^{m-1-2i} \nonumber\\
 &~~~+(u+w)\sum_{i=0}^{\frac{m-4}{2}}(m-1-2i)u^{2i}w^{m-2-2i}.\label{eqn3.1}
\end{align}
Now 
\begin{align}
  &g_{m+2}(u,w)\nonumber\\
  &=\sum_{i=1}^{m+2} iw^{i-1}u^{m+2-i}\nonumber\\
  &= u^2\left ( \sum_{i=1}^{m} iw^{i-1}u^{m-i}\right)+(m+1)uw^m+(m+2)w^{m+1}\nonumber\\
 &=u^2\left(u^{m-1}+w^{m-1}+2u^{m-2}w+\sum_{i=1}^{\frac{m-4}{2}}u^{2i}w^{m-1-2i}\right)\nonumber\\
 &~~~+u^2\left((u+w)\sum_{i=0}^{\frac{m-4}{2}}(m-1-2i)u^{2i}w^{m-2-2i}\right)\nonumber\\
 &~~~+(m+1)uw^m+(m+2)w^{m+1}\,\, ({\rm using } \,\, (\ref{eqn3.1})) \nonumber\\
 &=u^{m+1}+u^2w^{m-1}+2u^{m}w+\sum_{i=1}^{\frac{m-4}{2}}u^{2i+2}w^{m-1-2i}\nonumber\\
 &~~~+(u+w)\sum_{i=0}^{\frac{m-4}{2}}(m-1-2i)u^{2i+2}w^{m-2-2i}+(m+1)uw^m+(m+2)w^{m+1}\nonumber\\
 &=u^{m+1}+u^2w^{m-1}+2u^{m}w+\sum_{i=2}^{\frac{m-2}{2}}u^{2i}w^{m+1-2i}\nonumber\\
 &~~~+(u+w)\sum_{i=1}^{\frac{m-2}{2}}(m+1-2i)u^{2i}w^{m-2i}+(m+1)uw^m+(m+2)w^{m+1}\nonumber\\
 &=u^{m+1}+u^2w^{m-1}+2u^{m}w+\left\{-u^2w^{m-1}+\sum_{i=1}^{\frac{m-2}{2}}u^{2i}w^{m+1-2i}\right\}\nonumber\\
 &~~~+(u+w)\left\{-(m+1)w^{m}+\sum_{i=0}^{\frac{m-2}{2}}(m+1-2i)u^{2i}w^{m-2i}\right\}\nonumber\\
 &~~~+(m+1)uw^m+(m+2)w^{m+1}\nonumber\\
 &=(u^{m+1}+w^{m+1})+2u^{m}w+\sum_{i=1}^{\frac{m-2}{2}}u^{2i}w^{m+1-2i}+(u+w)\sum_{i=0}^{\frac{m-2}{2}}(m+1-2i)u^{2i}w^{m-2i}.
\end{align}
Therefore the statement (\ref{3.17}) is true for $n=m+2$.  
   By mathematical induction,  (\ref{3.17}) is true for all even integers  $n \geq 6$.
   
    Note that  $(u+w)<0$  due to  Lemma \ref{lem01}. Also recall that  $u_0<w<0<u<u_{1}<k$. Then $u^{p}+w^{p}<0$ for odd $p.$ Now  the terms $(u^{n-1}+w^{n-1})$, $2u^{n-2}w$, $\sum_{i=1}^{\frac{n-4}{2}}u^{2i}w^{n-1-2i}$, and $(u+w)\sum_{i=0}^{\frac{n-4}{2}}(n-1-2i)u^{2i}w^{n-2-2i}$ are all negative for all even integers $n \geq 6$. Thus we have that 
     $g_n(u,w)<0$ for all even integers $n\geq 6$ when $-k<u_0<0.$
     
Using similar arguments,  we can prove that $g_n(w,u)>0$ when $u_0<-k$.
\end{Proof}
 \begin{Lemma}{\label{lemma4.4}}
Assume that $m$ is a natural number. Then 
\begin{equation}
\begin{split}
   {\rm(i)}~  g_{4m+2}(w,u)-g_{4m+2}(u,w)&=\sum_{i=0}^{2m}(4m+1-2i)(uw)^{i} (u^{4m+1-2i}-w^{4m+1-2i})\,\, (\text{see (\ref{eqn3.01}) })\\
   &=(u+w)\sum_{i=0}^{m-1}(4m-1-4i)(u^{4m-4i}-w^{4m-4i})(uw)^{2i} \\
   &~~+2\sum_{i=0}^{m-1}(u^{4m+1-4i}-w^{4m+1-4i})(uw)^{2i}+(uw)^{2m}(u-w).\label{3.20}
   \end{split}
   \end{equation}
   \begin{equation}
   \begin{split}
    {\rm(ii)} ~  g_{4m}(w,u)-g_{4m}(u,w)&=\sum_{i=0}^{2m-1}(4m-1-2i)(uw)^{i} (u^{4m-1-2i}-w^{4m-1-2i})\,\, (\text{see (\ref{eqn3.01}) })\\
    &=(u+w)\sum_{i=0}^{m-1}(4m-3-4i)(u^{4m-2-4i}-w^{4m-2-4i})(uw)^{2i} \\
   &~~~+2\sum_{i=0}^{m-1}(u^{4m-1-4i}-w^{4m-1-4i})(uw)^{2i}.\label{3.20abc}~~~~~~~~~~~~~~~~~~~~~~~~~~~~~~~~~~~
    \end{split}
\end{equation}
Furthermore $g_n(w,u)>g_n(u,w)$ for all even positive integers when $-k<u_0<0$ and $u_0<-k.$
\end{Lemma}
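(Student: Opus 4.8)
The plan is to dispatch the two algebraic identities (i) and (ii) first and then read off the positivity claim as a sign analysis of the resulting closed forms. For the identities I would start from the single-sum expression for $g_n(w,u)-g_n(u,w)$ already established in Lemma \ref{lemmarao1}, namely \eqref{eqn3.01}, and specialize $n=4m+2$ and $n=4m$ to obtain the first equalities displayed in \eqref{3.20} and \eqref{3.20abc}. The heart of the matter is then to split the summation index $i$ into its even values $i=2j$ and odd values $i=2j+1$, so that every occurrence of $(uw)^i$ becomes $(uw)^{2j}$ times either $1$ or $uw$. The even-indexed terms already have the shape appearing in the $2\sum$ block, whereas the odd-indexed terms carry a stray factor $uw$ that must be absorbed.

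To absorb it I would use the elementary identity
\[
uw\,(u^{c}-w^{c})=(u+w)(u^{c+1}-w^{c+1})-(u^{c+2}-w^{c+2}),
\]
verified by expanding the right-hand side. Applied to each odd-indexed term this converts $(uw)^{2j+1}(u^{c}-w^{c})$ (with $c=4m-1-4j$ in part (i) and $c=4m-3-4j$ in part (ii)) into a multiple of $(u+w)(u^{c+1}-w^{c+1})$ minus a multiple of $(u^{c+2}-w^{c+2})$. The first piece assembles exactly into the $(u+w)\sum$ block of the target, while the subtracted piece $-(u^{c+2}-w^{c+2})$ recombines with the even-indexed terms: since the even-index coefficient is $4m+1-4j$ (resp. $4m-1-4j$) and the subtracted odd contribution carries coefficient $4m-1-4j$ (resp. $4m-3-4j$), their difference is the constant $2$, producing the $2\sum$ block. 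In part (i) the top even index $j=m$ has coefficient $1$ and reduces to the isolated term $(uw)^{2m}(u-w)$; in part (ii) there is no such leftover. I expect the main obstacle to be purely the index bookkeeping — keeping the ranges $0\le j\le m-1$ consistent, tracking the parities ($4m-4j$ even, $4m+1-4j$ odd, and so on), and confirming the coefficient difference equals $2$ — which is routine but error-prone.

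For the positivity claim I would collect the sign data. Since the relevant center is $(0,0)$ and $w=\delta(u)$, one has $u_0<w<0<u<k$, whence $u-w>0$, $(uw)^{2j}>0$, and every odd-exponent difference $u^{\mathrm{odd}}-w^{\mathrm{odd}}>0$; moreover $u^{2}-w^{2}=(u-w)(u+w)$ inherits the sign of $u+w$, so each even-exponent difference $u^{2r}-w^{2r}$ has the sign of $u+w$. By Lemma \ref{lem01}, $u+w<0$ when $-k<u_0<0$ and $u+w>0$ when $u_0<-k$. In the closed form of part (i), the $2\sum$ block (odd exponents, positive coefficients) and the isolated term $(uw)^{2m}(u-w)$ are positive in both regimes, while in the $(u+w)\sum$ block the inner sum carries the sign of $u+w$ (its even-exponent differences do, and its coefficients $4m-1-4j\geq 3$ are positive), so the product $(u+w)\cdot(\text{inner sum})$ is positive regardless of the sign of $u+w$. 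Hence $g_{4m+2}(w,u)-g_{4m+2}(u,w)>0$, and the identical argument applied to part (ii) gives $g_{4m}(w,u)-g_{4m}(u,w)>0$. Together these cover every even $n$ (the degenerate case $n=2$, $m=0$ reducing at once to $u-w>0$), establishing $g_n(w,u)>g_n(u,w)$ in both parametric regimes.
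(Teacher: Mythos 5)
Your proof is correct, but it reaches the closed forms \eqref{3.20}--\eqref{3.20abc} by a genuinely different route than the paper. The paper proves part (i) by induction on $m$: it verifies the base case $n=6$ by hand, and in the step from $n=4m+2$ to $n=4m+6$ it factors $(uw)^2$ out of the single-sum form \eqref{eqn3.01}, invokes the inductive hypothesis, and absorbs the two new top terms via $(4m+5)(u^{4m+5}-w^{4m+5})+(4m+3)uw(u^{4m+3}-w^{4m+3})=2(u^{4m+5}-w^{4m+5})+(4m+3)(u+w)(u^{4m+4}-w^{4m+4})$ --- which is your telescoping identity $uw(u^{c}-w^{c})=(u+w)(u^{c+1}-w^{c+1})-(u^{c+2}-w^{c+2})$ applied once per induction step --- and then dismisses part (ii) as provable by a ``similar method.'' You instead apply that identity uniformly after splitting the summation index of \eqref{eqn3.01} into even and odd values, which yields both residue classes $n=4m+2$ and $n=4m$ in a single non-inductive pass, makes the constant coefficient $2$ transparent as the difference $(4m+1-4j)-(4m-1-4j)$ (resp.\ $(4m-1-4j)-(4m-3-4j)$), and correctly isolates the leftover $(uw)^{2m}(u-w)$ coming from the top even index $j=m$ in part (i), which has no counterpart in part (ii). Your sign analysis coincides with the paper's: in both parametric regimes $w=\delta(u)$ satisfies $w<0<u$, so $u-w>0$, $(uw)^{2j}>0$, the odd-exponent differences are positive, and the even-exponent differences carry the sign of $u+w$ (Lemma \ref{lem01}), making the $(u+w)$-block positive whether $u+w<0$ (case $-k<u_0<0$) or $u+w>0$ (case $u_0<-k$); your observation that $n=2$ reduces directly to $u-w>0$ matches the paper's separate check of $g_2$. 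What your approach buys is a shorter, parity-uniform derivation that actually establishes part (ii) rather than asserting it; the paper's induction conforms to the style of its neighboring Lemmas \ref{lem1} and \ref{lem3.11} but leaves more to the reader.
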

\begin{Proof}
\rm(i) We prove (\ref{3.20}) by induction on $m.$
 For $m=1,$ we have $n=6.$ Note that 
\begin{align*}
  g_6(w,u)-g_6(u,w)&=\sum_{i=0}^{2}(5-2i)(uw)^{i} (u^{5-2i}-w^{5-2i})\,\, ({\rm see } \,\, (\ref{eqn3.01} )) \nonumber\\
 &= 5(u^5-w^5)+3uw(u^3-w^3)+(uw)^2(u-w)\nonumber\\
 &=2(u^5-w^5)+3\{u^5-w^5+uw(u^3-w^3)\}+(uw)^2(u-w) \nonumber\\
&=2(u^5-w^5)+3(u+w)(u^4-w^4)+(uw)^2(u-w)\nonumber\\
   &=(u+w)\sum_{i=0}^{1-1}(3-4i)(u^{4-4i}-w^{4-4i})(uw)^{2i} \nonumber\\
   &~~~+2\sum_{i=0}^{1-1}(u^{5-4i}-w^{5-4i})(uw)^{2i}+(uw)^{2}(u-w).
\end{align*}
 Therefore the statement (\ref{3.20}) is true for $n=6,$ i.e. $m=1.$\\
 Assume that the equation (\ref{3.20}) is true for $n=2(2m+1)$ for a natural number $m.$ Then, by (\ref{3.20}),
\begin{align}
  g_{4m+2}(w,u)-g_{4m+2}(u,w)&=\sum_{i=0}^{2m}(4m+1-2i)(uw)^{i} (u^{4m+1-2i}-w^{4m+1-2i})\nonumber\\
   &=(u+w)\sum_{i=0}^{m-1}(4m-1-4i)(u^{4m-4i}-w^{4m-4i})(uw)^{2i} \nonumber\\
   &+2\sum_{i=0}^{m-1}(u^{4m+1-4i}-w^{4m+1-4i})(uw)^{2i}+(uw)^{2m}(u-w).  \label{eqn3.03}
\end{align}
 Let us prove (\ref{3.20}) for $n=2(2(m+1)+1)$ i.e $n=4m+6.$ Now
 \begin{align}
 g_{4m+6}(w,u)-g_{4m+6}(u,w)&=\sum_{i=0}^{2m+2}(4m+5-2i)(uw)^{i} (u^{4m+5-2i}-w^{4m+5-2i})\,\, ({\rm using } \,\, (\ref{eqn3.01} )) \nonumber\\
 &=\sum_{i=-2}^{2m}(4m+1-2i)(uw)^{i+2} (u^{4m+1-2i}-w^{4m+1-2i})\nonumber\\
 &=(uw)^2\sum_{i=0}^{2m}(4m+1-2i)(uw)^{i} (u^{4m+1-2i}-w^{4m+1-2i})\nonumber\\
 &~~~+(4m+5)(u^{4m+5}-w^{4m+5})+(4m+3)(uw)(u^{4m+3}-w^{4m+3})\nonumber\\
 &=(u+w)\sum_{i=0}^{m-1}(4m-1-4i)(u^{4m-4i}-w^{4m-4i})(uw)^{2i+2} \nonumber\\
   &~~~+2\sum_{i=0}^{m-1}(u^{4m+1-4i}-w^{4m+1-4i})(uw)^{2i+2}+(uw)^{2m+2}(u-w) \nonumber\\
   &~~~+2(u^{4m+5}-w^{4m+5})+(4m+3)(u+w)(u^{4m+4}-w^{4m+4})\,\, ({\rm using } \,\, (\ref{eqn3.03}))\nonumber\\  
  &=(u+w)\sum_{i=1}^{m}(4m+3-4i)(u^{4m+4-4i}-w^{4m+4-4i})(uw)^{2i} \nonumber\\
   &~~~+2\sum_{i=1}^{m}(u^{4m+5-4i}-w^{4m+5-4i})(uw)^{2i}+(uw)^{2m+2}(u-w) \nonumber\\
   &~~~+2(u^{4m+5}-w^{4m+5})+(4m+3)(u+w)(u^{4m+4}-w^{4m+4}) \nonumber\\  
  &=(u+w)\sum_{i=0}^{m}(4m+3-4i)(u^{4m+4-4i}-w^{4m+4-4i})(uw)^{2i} \nonumber\\
   &~~~+2\sum_{i=0}^{m}(u^{4m+5-4i}-w^{4m+5-4i})(uw)^{2i}+(uw)^{2m+2}(u-w).
\end{align}
 This implies that the equation (\ref{3.20}) is true for $n=4m+6.$ Therefore, by Mathematical Induction, the equation (\ref{3.20}) is true for all $n=2(2m+1),$ where $m$ is any natural number.

 \rm(ii) Using similar method, we can prove that (\ref{3.20abc}) is true for all $n=2(2m),$ where $m$ is any natural number.

 When $-k<u_0<0,$ we have $u+w<0$ due to Lemma \ref{lem01}. Then $u^{2m}-w^{2m}<0$ and $u^{2m-1}-w^{2m-1}>0.$ This implies that $g_n(w,u)>g_n(u,w)$ for all even positive integer $n \geq 4.$ 
Note that $g_2(w,u)-g_2(u,w)=w+2u-(u+2w)=u-w>0.$ Therefore  $g_n(w,u)>g_n(u,w)$ for all even positive integers.

  Similarly,  we can prove that   $g_n(w,u)>g_n(u,w)$ when $u_0<-k.$
 \end{Proof}
\begin{Lemma} \label{lem3.11}
The function $g_n(u,w)$ can be written in the form:
\begin{equation}
\begin{split}
   g_n(u,w)&=(u^{n-1}+w^{n-1})+\sum_{i=1}^{\frac{n-3}{2}}u^{2i}w^{n-1-2i}\\
   &~~~+(u+w)\sum_{i=0}^{\frac{n-3}{2}}(n-1-2i)u^{2i}w^{n-2-2i}, \label{3.24a} 
    \end{split}
\end{equation}
for all odd integers $n \geq 5$.
Furthermore  \rm(i) $g_n(u,w)>0$ when $-k<u_0<0,$ and \rm(ii) $g_n(w,u)>0$ when $u_0<-k$ for all odd integers $n \geq 3.$
\end{Lemma}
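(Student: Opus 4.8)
The plan is to mirror the proof of Lemma \ref{lem1} verbatim in structure, simply replacing the even-$n$ bookkeeping by the odd-$n$ bookkeeping. The argument splits into two independent parts: first establishing the closed form (\ref{3.24a}) by induction on the odd integer $n$, and then reading off the three sign statements directly from that form with the help of Lemma \ref{lem01} and elementary parity considerations.

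For the closed form I would induct on odd $n\geq 5$ in steps of two. The base case $n=5$ is a direct expansion: one writes $g_5(u,w)=u^4+2wu^3+3w^2u^2+4w^3u+5w^4$ and matches it term by term against the right-hand side of (\ref{3.24a}) with $\frac{n-3}{2}=1$ (the middle sum contributing $u^2w^2$ and the last sum contributing $(u+w)(4w^3+2u^2w)$). For the inductive step, assuming (\ref{3.24a}) for an odd $m\geq 5$, I would use the recurrence
\[
g_{m+2}(u,w)=u^2\,g_m(u,w)+(m+1)uw^m+(m+2)w^{m+1},
\]
which follows from the definition $g_n(u,w)=\sum_{i=1}^{n} i w^{i-1}u^{n-i}$ by peeling off the top two summands ($i=m+1$ and $i=m+2$). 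Substituting the inductive hypothesis, multiplying the sums by $u^2$ (which shifts every power of $u$ up by two, i.e.\ re-indexes the summations), and then absorbing the leftover monomials $(m+1)uw^m$ and $(m+2)w^{m+1}$ so that the ranges and coefficients line up with those demanded for $n=m+2$. This index reconciliation is precisely the delicate but routine bookkeeping performed in the even case of Lemma \ref{lem1}, and I expect it to be the main obstacle here as well.

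With (\ref{3.24a}) established, the sign statements are immediate from parity. Recall that $u_0<w<0<u<k$ in both cases, and that Lemma \ref{lem01} gives $u+w<0$ when $-k<u_0<0$ and $u+w>0$ when $u_0<-k$. Since $n$ is odd, $n-1$ is even and $n-2-2i$ is odd. For case (i), the block $u^{n-1}+w^{n-1}$ and each term $u^{2i}w^{n-1-2i}$ of the middle sum are positive (even powers), while in the last sum each term $(n-1-2i)u^{2i}w^{n-2-2i}$ is negative (the coefficient $n-1-2i\geq 2>0$, $u^{2i}>0$, and $w$ raised to an odd power is negative); multiplying this negative sum by $u+w<0$ contributes positively, so $g_n(u,w)>0$. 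For case (ii), I apply the same reasoning to $g_n(w,u)$, obtained by interchanging $u$ and $w$ in (\ref{3.24a}); now $u+w>0$, $w^{2i}>0$, and $u$ raised to an odd power is positive, so all three blocks are positive and $g_n(w,u)>0$.

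Finally, the value $n=3$ is not covered by (\ref{3.24a}) (which requires $n\geq 5$) and must be checked by hand. A direct computation gives $g_3(u,w)=u^2+2uw+3w^2=(u+w)^2+2w^2>0$ and $g_3(w,u)=(u+w)^2+2u^2>0$, which settle the cases (i) and (ii) for $n=3$. Combining these base cases with the induction yields $g_n(u,w)>0$ for all odd $n\geq 3$ when $-k<u_0<0$, and $g_n(w,u)>0$ for all odd $n\geq 3$ when $u_0<-k$, completing the proof.
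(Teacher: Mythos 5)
Your proposal is correct and follows essentially the same route as the paper's own proof: the identity (\ref{3.24a}) is established by the base case $n=5$ plus induction via the recurrence $g_{m+2}(u,w)=u^2g_m(u,w)+(m+1)uw^m+(m+2)w^{m+1}$, the signs are read off blockwise using Lemma \ref{lem01} and parity, and $n=3$ is handled by the direct computation $g_3(u,w)=(u+w)^2+2w^2$. Your explicit treatment of case (ii) by swapping $u$ and $w$ in (\ref{3.24a}) merely spells out what the paper leaves as ``similarly.''
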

\begin{Proof}
 For $n=3,$ 
 $$g_3(u,w)=\sum_{i=1}^{3} iw^{i-1}u^{3-i}=u^2+2uw+3w^2=(u+w)^2+2w^2>0.$$ 

For $n=5,$
\begin{align*}
  g_5(u,w)=\sum_{i=1}^{5} iw^{i-1}u^{5-i} &= u^4+2wu^3+3w^2u^2+4w^3u+5w^4\nonumber\\
 &= u^4+w^4+u^2w^2+(u+w)(4w^3+2u^2w)\nonumber\\
 &=(u^{5-1}+w^{5-1})+\sum_{i=1}^{\frac{5-3}{2}}u^{2i}w^{5-1-2i} \nonumber\\
   &~~~+(u+w)\sum_{i=0}^{\frac{5-3}{2}}(5-1-2i)u^{2i}w^{5-2-2i}.
\end{align*}

Therefore the equation (\ref{3.24a}) is true for $n=5.$

Assume that (\ref{3.24a}) is true for $n=m,$ where $m \geq 5$ is an odd integer. Then
\begin{align}
  g_{m}(u,w)&=(u^{m-1}+w^{m-1})+\sum_{i=1}^{\frac{m-3}{2}}u^{2i}w^{m-1-2i} \nonumber\\
   &+(u+w)\sum_{i=0}^{\frac{m-3}{2}}(m-1-2i)u^{2i}w^{m-2-2i}.\label{eqn3.3}
\end{align}
Now 
\begin{align*}
  g_{m+2}(u,w) &= u^2\left ( \sum_{i=1}^{m} iw^{i-1}u^{m-i}\right)+(m+1)uw^m+(m+2)w^{m+1}\nonumber\\
 &=u^2\left(u^{m-1}+w^{m-1}+\sum_{i=1}^{\frac{m-3}{2}}u^{2i}w^{m-1-2i}\right)\nonumber\\
 &~~~+u^2\left((u+w)\sum_{i=0}^{\frac{m-3}{2}}(m-1-2i)u^{2i}w^{m-2-2i}\right)\nonumber\\
 &~~~+(m+1)uw^m+(m+2)w^{m+1}\,\, ({\rm using } \,\, (\ref{eqn3.3})) \nonumber\\
 &=u^{m+1}+u^2w^{m-1}+\sum_{i=1}^{\frac{m-3}{2}}u^{2i+2}w^{m-1-2i}\nonumber\\
 &~~~+(u+w)\sum_{i=0}^{\frac{m-3}{2}}(m-1-2i)u^{2i+2}w^{m-2-2i}+(m+1)uw^m+(m+2)w^{m+1}\nonumber\\
 &=u^{m+1}+u^2w^{m-1}+\sum_{i=2}^{\frac{m-1}{2}}u^{2i}w^{m+1-2i}\nonumber\\
 &~~~+(u+w)\sum_{i=1}^{\frac{m-1}{2}}(m+1-2i)u^{2i}w^{m-2i}+(m+1)uw^m+(m+2)w^{m+1}\nonumber\\
 &=u^{m+1}+u^2w^{m-1}+\left\{-u^2w^{m-1}+\sum_{i=1}^{\frac{m-1}{2}}u^{2i}w^{m+1-2i}\right\}\nonumber\\
 &~~~+(u+w)\left\{-(m+1)w^{m}+\sum_{i=0}^{\frac{m-1}{2}}(m+1-2i)u^{2i}w^{m-2i}\right\}\nonumber\\
 &~~~+(m+1)uw^m+(m+2)w^{m+1}\nonumber\\
 &=(u^{m+1}+w^{m+1})+\sum_{i=1}^{\frac{m-1}{2}}u^{2i}w^{m+1-2i}+(u+w)\sum_{i=0}^{\frac{m-1}{2}}(m+1-2i)u^{2i}w^{m-2i}.
\end{align*}
Therefore the equation (\ref{3.24a}) is true for $n=m+2$ and hence,  
   by mathematical induction, the equation (\ref{3.24a}) is true for all odd integers $n \geq 5$. 
   
   Let $-k<u_0<0.$ Then we have $(u+w)<0$ by Lemma \ref{lem01}. Recall that  $u_0<w<0<u<u_{1}<k.$ Then $(u^{n-1}+w^{n-1})$, $\sum_{i=1}^{\frac{n-3}{2}}u^{2i}w^{n-1-2i}$, and $(u+w)\sum_{i=0}^{\frac{n-3}{2}}(n-1-2i)u^{2i}w^{n-2-2i}$ are positive. This implies that $g_n(u,w)>0$ for $n \geq 5$. We already proved $g_3(u,w)>0$ and thus we have that $g_n(u,w)>0$ for all odd  integers $n \geq 3.$
   
  Similarly,  we can prove that $g_n(w,u)>0$ when $u_0<-k.$
\end{Proof}

\begin{Lemma} \label{lem3.9}
Assume that $m$ is a natural number. Then 
\begin{equation}
\begin{split}
   {\rm(i)}~  g_{4m+1}(w,u)-g_{4m+1}(u,w)&=\sum_{i=0}^{2m-1}(4m-2i)(uw)^{i} (u^{4m-2i}-w^{4m-2i}) \text{~~(using (\ref{3.13}))}\\
   &=(u+w)\sum_{i=0}^{m-1}(4m-2-4i)(u^{4m-1-4i}-w^{4m-1-4i})(uw)^{2i} \\
   &~~~+2\sum_{i=0}^{m-1}(u^{4m-4i}-w^{4m-4i})(uw)^{2i}.
   \end{split}
   \end{equation}
   \begin{equation}
\begin{split}  
   {\rm(ii)}~  g_{4m+3}(w,u)-g_{4m+3}(u,w)&=\sum_{i=0}^{2m}(4m+2-2i)(uw)^{i} (u^{4m+2-2i}-w^{4m+2-2i})\text{~~(using (\ref{3.13}))}\\
   &=(u+w)\sum_{i=0}^{m-1}(4m-4i)(u^{4m+1-4i}-w^{4m+1-4i})(uw)^{2i} \\
   &~~~+2\sum_{i=0}^{m}(u^{4m+2-4i}-w^{4m+2-4i})(uw)^{2i}.~~~~~~~~~~~~~~~~~~~~~\\
    \end{split}
\end{equation}
Furthermore $(a)$ $g_n(u,w)>g_n(w,u)$ when $-k<u_0<0,$ and $(b)$ $g_n(u,w)<g_n(w,u)$ when $u_0<-k$ for all odd integers $n \geq 3.$
\end{Lemma}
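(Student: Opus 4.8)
The plan is to mirror the proof of Lemma \ref{lemma4.4}, of which the present statement is the odd-degree counterpart. The two opening equalities in (i) and (ii) come for free: specialising the odd case of Lemma \ref{lemmarao1}, namely (\ref{3.13}), to $n=4m+1$ and $n=4m+3$ directly yields the sums $\sum_{i=0}^{2m-1}(4m-2i)(uw)^{i}(u^{4m-2i}-w^{4m-2i})$ and $\sum_{i=0}^{2m}(4m+2-2i)(uw)^{i}(u^{4m+2-2i}-w^{4m+2-2i})$, respectively. The real content is the second equality in each part, whose purpose is to rewrite these sums so that every power of $uw$ that survives is even. This matters because in the raw sums the factor $(uw)^{i}$ changes sign with the parity of $i$ (as $uw<0$), so the termwise sign is not controllable; the factored form removes this difficulty.

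I would establish the factored identity by induction on $m$, with base cases $n=5$ for (i) and $n=7$ for (ii). In the inductive step I would, exactly as in Lemma \ref{lemma4.4}, take the sum for $n+4$ produced by (\ref{3.13}), perform a shift of the summation index by two so as to expose a factor $(uw)^{2}$ times the sum for $n$, peel off the two top-degree terms, feed in the inductive hypothesis, and recombine. The single algebraic identity powering this reorganisation is $u^{p+1}-w^{p+1}+uw\left(u^{p-1}-w^{p-1}\right)=(u+w)\left(u^{p}-w^{p}\right)$, which is precisely what lets a factor $(u+w)$ be extracted while promoting an odd power of $uw$ to an even one.

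With (i) and (ii) in hand, the sign claims follow from a parity count. In the factored form every coefficient is positive and every $(uw)^{2i}$ is positive, so the sign of each piece is governed solely by the sign of $u+w$ and by the parities of the exponents in the surviving differences $u^{q}-w^{q}$ --- which are odd in the $(u+w)$-multiplied sum and even in the residual sum. Lemma \ref{lem01} provides the decisive input: $u+w<0$ when $-k<u_0<0$ and $u+w>0$ when $u_0<-k$. Since $u>0>w$, one has $u^{q}-w^{q}>0$ for every odd $q$ in both cases; when $u+w<0$ (so $|w|>u$) the even differences satisfy $u^{q}-w^{q}<0$, whereas when $u+w>0$ (so $u>|w|$) they satisfy $u^{q}-w^{q}>0$. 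Substituting these patterns makes $g_n(w,u)-g_n(u,w)$ visibly negative for $-k<u_0<0$ and visibly positive for $u_0<-k$, which is (a) and (b) for $n=4m+1$ and $n=4m+3$.

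The closed forms need $m\geq 1$, so they omit $n=3$, which I would settle by direct computation: $g_3(w,u)-g_3(u,w)=2\left(u^{2}-w^{2}\right)=2(u+w)(u-w)$, and since $u-w>0$ this has the sign of $u+w$, again supplied by Lemma \ref{lem01}; this yields the asserted inequality in both ranges and, together with the two inductive families, covers all odd integers $n\geq 3$. The only genuine obstacle I anticipate is the index bookkeeping in the inductive step --- matching the upper and lower limits after the shift by two and absorbing the two peeled-off top-degree terms into the running sums --- which is the same delicate accounting already seen in Lemma \ref{lemma4.4}; the subsequent sign analysis is then entirely routine.
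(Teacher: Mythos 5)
Your proposal is correct and takes essentially the same route as the paper, whose entire proof of this lemma is the remark that it follows by closely mirroring Lemma \ref{lemma4.4}: the same induction on $m$ with the index shift that exposes a factor $(uw)^{2}$, the identity $u^{p+1}-w^{p+1}+uw\left(u^{p-1}-w^{p-1}\right)=(u+w)\left(u^{p}-w^{p}\right)$ to extract $(u+w)$, and the final sign analysis driven by Lemma \ref{lem01}. Your separate direct verification of $n=3$ is the exact analogue of the paper's separate treatment of $n=2$ in Lemma \ref{lemma4.4} and correctly completes the coverage of all odd $n\geq 3$.
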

\begin{Proof}
Following closely the proof of Lemma \ref{lemma4.4}, we can prove this lemma.
\end{Proof}
\begin{Proposition}\label{lemma4.2}
Let $n \geq 2$ be an integer. Then the function $G_{n}(h)$ is monotone when \rm(i) $-k<u_0<0,$ and \rm(ii) $u_0<-k.$ 
\end{Proposition}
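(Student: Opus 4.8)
The plan is to follow the template of Proposition \ref{lemma3.2}: reduce the monotonicity of $G_n(h)$ to the strict monotonicity of $T_n(u)$ through Theorem \ref{thmc}, and establish the latter by showing that $T_n'(u)$ has a constant sign throughout the relevant interval. Writing $w:=\delta(u)$ and $f(u)=(u-u_0)(k-u)$, I would start from the identity already used earlier,
\[
T_n'(u)=\frac{1}{wf(w)}\Bigl\{g_n(u,w)\,uf(u)+g_n(w,u)\,wf(w)\Bigr\},
\]
and set $S(u):=g_n(u,w)\,uf(u)+g_n(w,u)\,wf(w)$, so that the sign of $T_n'(u)$ is exactly that of $S(u)/(wf(w))$. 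First I would record the elementary signs valid throughout the periodic annulus in both cases: since the annulus surrounds $(0,0)$ we have $w<0<u$ with $w>u_0$ and $u<k$, whence $f(u)>0$ and $f(w)>0$, so that $uf(u)>0$ while $wf(w)<0$. Thus it suffices to fix the sign of $S(u)$, which will be opposite to that of $T_n'(u)$.

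For the case $u_0<-k$ I would use the decomposition
\[
S(u)=g_n(w,u)\bigl(uf(u)+wf(w)\bigr)+uf(u)\bigl(g_n(u,w)-g_n(w,u)\bigr).
\]
By Lemma \ref{eq2.2}$(a)$ we have $uf(u)+wf(w)<0$; by Lemmas \ref{lem1} and \ref{lem3.11} we have $g_n(w,u)>0$; and by Lemmas \ref{lemma4.4} and \ref{lem3.9}$(b)$ we have $g_n(w,u)>g_n(u,w)$. Since $uf(u)>0$, both summands are negative, so $S(u)<0$ and therefore $T_n'(u)>0$ uniformly in $n$ and $u$. Theorem \ref{thmc} then gives that $G_n(h)$ is monotone.

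For the case $-k<u_0<0$ the sign of $uf(u)+wf(w)$ is reversed (Lemma \ref{eq2.2}$(b)$), and I would instead group
\[
S(u)=g_n(u,w)\bigl(uf(u)+wf(w)\bigr)+wf(w)\bigl(g_n(w,u)-g_n(u,w)\bigr).
\]
Here the argument splits by parity. For odd $n\geq 3$, Lemma \ref{lem3.11} gives $g_n(u,w)>0$ and Lemma \ref{lem3.9}$(a)$ gives $g_n(u,w)>g_n(w,u)$, so (using $wf(w)<0$) both summands are positive and $S(u)>0$; for even $n\geq 2$, Lemma \ref{lem1} gives $g_n(u,w)<0$ and Lemma \ref{lemma4.4} gives $g_n(w,u)>g_n(u,w)$, so both summands are negative and $S(u)<0$. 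In either parity $S(u)$ keeps a fixed sign on the interval, so $T_n'(u)$ does not change sign there, and again Theorem \ref{thmc} yields that $G_n(h)$ is monotone.

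The routine content is confined to checking the two algebraic regroupings of $S(u)$ and invoking the sign lemmas already proved; the genuinely delicate point is the bookkeeping. In particular, the sign of $uf(u)+wf(w)$ flips between the two parameter regimes, which forces different groupings, and for $-k<u_0<0$ the sign of $g_n(u,w)$ itself depends on the parity of $n$, so that no single uniform estimate works and only a parity-split argument succeeds. I expect this parity-dependent sign analysis, together with applying each comparison lemma in the correct direction, to be the main obstacle; note also that the resulting direction of monotonicity of $G_n$ need not be the same in the two regimes (nor across parities), which is immaterial since only monotonicity is required.
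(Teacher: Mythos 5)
Your proposal is correct and takes essentially the same route as the paper's proof: both reduce matters to the sign of $T_n'(u)=\frac{1}{wf(w)}\{g_n(u,w)uf(u)+g_n(w,u)wf(w)\}$, invoke the same sign lemmas (Lemma \ref{eq2.2} for $uf(u)+wf(w)$, Lemmas \ref{lem1} and \ref{lem3.11} for the signs of $g_n$, Lemmas \ref{lemma4.4} and \ref{lem3.9} for the comparisons $g_n(w,u)\gtrless g_n(u,w)$), and conclude via Theorem \ref{thmc}. The only presentational difference is that you make the algebraic regroupings of $S(u)$ explicit (which lets you treat both parities uniformly when $u_0<-k$), whereas the paper reaches the identical sign conclusions by chained inequalities in four parity-and-regime cases.
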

\begin{Proof}
We prove the proposition in two cases \rm(a) $n$ is even, \rm(b) $n(>1)$ is odd.
\\
\textbf{Case \rm(a):} Let us assume that  $-k<u_0<0.$
In view of Lemma \ref{lem1}, we have $g_n(u,w)<0$ for all even positive integers. Further we have $uf(u)>-wf(w)>0$ due to Lemma \ref{eq2.2}. Further, in view of Lemma \ref{lemma4.4}, $g_n(w,u)>g_n(u,w)$ (i. e., $-g_n(u,w)>-g_n(w,u))$ for all even positive even integers.   These two inequalities imply that $g_n(u,w)uf(u)+g_n(w,u)wf(w)<0.$ Therefore we have 
$$T'_{n}(u)=\frac{1}{wf(w)}\{g_n(u,w)uf(u)+g_n(w,u)wf(w)\}>0$$ 
for all even positive integers as $f(w)>0$ and $u_0<w<0<u<u_{1}<k.$ This, in turn, implies that the function $G_{n}(h)$ is monotone on $\left(0,\frac{\beta u_{0}^3}{12}(u_0-2k)\right)$ for all even positive integers  due to  Theorem \ref{thmc}.

 Now let us assume that $u_0<-k.$ We have  $g_n(w,u)>g_n(u,w)$ for all even positive integers in view of Lemma \ref{lemma4.4}. Furthermore $g_n(w,u)>0$ and $0<uf(u)<-wf(w)$ for all even positive integers due to Lemmas \ref{lem1} and \ref{eq2.2}. These inequalities imply that $g_n(u,w)uf(u)+g_n(w,u)wf(w)<0.$ Also we have $wf(w)<0$ when $u_{0}<w_{2}<w<0<u<k.$ Therefore we have $T'_{n}(u)>0$ for all even positive integers. This, in turn, implies that the function  $G_{n}(h)$ is monotone on $\left(0,\frac{\beta k^3}{12}(k-2u_0)\right)$ for all even positive integers due to  Theorem \ref{thmc}.
\\
\textbf{Case \rm(b):}
  Let us assume that  $-k<u_0<0.$ We have  $g_n(w,u)<g_n(u,w)$ for all odd integers $n \geq 3$ due to Lemma \ref{lem3.9}. Furthermore, by Lemma \ref{lem3.11}, $g_n(u,w)>0$ for all odd integers $n \geq 3$. We also have $uf(u)>-wf(w)>0$ in view of Lemma \ref{eq2.2}. These inequalities imply that $g_n(u,w)uf(u)+g_n(w,u)wf(w)>0.$ Note that we have $wf(w)<0$ when $u_0<w<0<u<u_{1}<k.$ Therefore we have $T'_{n}(u)<0$ for all odd integers $n \geq 3$. This, in turn, implies that the function  $G_{n}(h)$ is monotone on $\left(0,\frac{\beta u_{0}^3}{12}(u_0-2k)\right)$ for all odd integers $n \geq 3$ due to Theorem \ref{thmc}. 
  
  Now let us assume that $u_0<-k.$ In view of Lemma \ref{lem3.9}, $g_n(w,u)>g_n(u,w)$ for all odd integers $n \geq 3$. Furthermore $g_n(w,u)>0$ for all odd integers $n \geq 3$ and  $0<uf(u)<-wf(w)$ due to Lemmas \ref{lem3.11} and \ref{eq2.2}. These inequalities imply that $g_n(u,w)uf(u)+g_n(w,u)wf(w)<0.$ Recall that $wf(w)<0$ when $u_0<w_{2}<w<0<u<k.$ Therefore we have $T'_{n}(u)>0$ for all odd integers $n \geq 3.$ 
  This, in turn, implies that the function $G_{n}(h)$ is monotone on $\left(0,\frac{\beta k^3}{12}(k-2u_0)\right)$ for all odd integers $n \geq 3$ due to Theorem \ref{thmc}. This completes the proof.
\end{Proof}
\subsection{Monotonicity of $G_{n}(h)$ for $u_0=-k$}
In this subsection we deal with the  monotonicity of the function $G_n(h).$ Note that $\Gamma_h$ are the trajectories  from the periodic annulus around (0,0) of the unperturbed system (\ref{kpr1.17}) when $u_0=-k.$ 

The function $\Phi(u)$ is nonnegative on $[-k,k]$. Further 
$\Phi(u_0=-k)=\Phi(k)=\frac{\beta k^4}{4} $
and $\Phi^\prime(u) <0$ on the interval $(-k,0)$ and $\Phi^\prime(u) >0$ on $(0, k).$ It is easy to see that 
there exists an involution $\delta(u)$ defined for all $u \in (-k,k)$. Further, for $u \in (0,k),$ we have 
$w=\delta(u) \in (-k,0).$ See Figure \ref{kpinv5}.
\begin{Proposition}{\label{thm2.5}}
Assume that   $\beta>0$, $k>0$ and $u_0=-k$.  Then, for all even integers $n$, the ratio $G_{n}(h)$ is monotone when $h\in\left(0,\frac{\beta u_{0}^4}{4}\right)$.
\end{Proposition}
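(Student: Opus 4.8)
The plan is to exploit the special symmetry that the choice $u_0=-k$ forces on the potential $\Phi$. Substituting $u_0=-k$ into $\Phi(u)=\beta\left(-\frac{u^4}{4}+\frac{(k+u_0)}{3}u^3-\frac{ku_0}{2}u^2\right)$ makes the cubic term vanish (since $k+u_0=0$) and leaves $\Phi(u)=\beta\left(-\frac{u^4}{4}+\frac{k^2}{2}u^2\right)$, an \emph{even} function of $u$. Consequently the equation $\Phi(u)=\Phi(w)$ defining the involution on $(-k,k)$ is solved explicitly by the reflection $w=\delta(u)=-u$, which maps $(0,k)$ onto $(-k,0)$ exactly as required by the periodic annulus around the center $(0,0)$.

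With the involution in hand I would feed $w=-u$ into the closed form $T_n(u)=\frac{u^{n+1}-w^{n+1}}{u-w}=u^n+u^{n-1}w+\cdots+w^n$ already used in Proposition \ref{lemma3.2}. For even $n$ the exponent $n+1$ is odd, so $w^{n+1}=(-u)^{n+1}=-u^{n+1}$ and the numerator collapses to $u^{n+1}-w^{n+1}=2u^{n+1}$ while the denominator is $u-w=2u$; hence $T_n(u)=u^n$. On the interval $(\hat a,B)=(0,k)$ this is strictly increasing, so the strict monotonicity hypothesis of Theorem \ref{thmc} is satisfied and $G_n(h)$ is monotone on $\left(0,\frac{\beta u_0^4}{4}\right)$ for every even $n$.

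There is no serious obstacle here; the only points needing care are verifying the hypotheses of Theorem \ref{thmc}. Condition $(a)$ is checked directly from $\Phi'(u)=\beta u(k^2-u^2)$, which gives $\Phi'(u)\,u>0$ for $u\in(-k,k)\setminus\{0\}$, confirming that $(0,0)$ is the relevant center and that the involution is well defined on the stated range. It is also worth remarking explicitly why the statement is restricted to even $n$: for odd $n$ the exponent $n+1$ is even, so $w^{n+1}=u^{n+1}$ and $T_n(u)\equiv 0$ is constant rather than strictly monotone, so Theorem \ref{thmc} yields no conclusion in that case. This degeneracy is precisely why the $u_0=-k$ situation appears only for even $n$ in Theorem \ref{lemma3.01}(iv).
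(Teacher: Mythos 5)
Your proposal is correct and takes essentially the same route as the paper's proof: both verify condition $(a)$ of Theorem \ref{thmc} for the even potential $\Phi(u)=\beta\left(-\frac{u^4}{4}+\frac{k^2}{2}u^2\right)$, identify the involution $\delta(u)=-u$, compute $T_n(u)=u^n$ for even $n$ (your closed form $\frac{u^{n+1}-w^{n+1}}{u-w}$ is just the paper's integral evaluated), and conclude monotonicity of $G_n(h)$ from $T_n'(u)=nu^{n-1}>0$ on $(0,k)$. Your remark on odd $n$ likewise matches the paper's subsequent Note, where $A_n(h)\equiv 0$ by symmetry.
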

\begin{proof}
Here we have $\Phi'(u)(u-0)>0$ for $u\in(u_0,k) \setminus\{0\}$. It is easy to see that the involution $\delta(u)=-u$ defined on $(u_0,k).$ Here $H(0,0)=0$ and $H(u_0,0)=H(k,0)=\frac{\beta u_{0}^4}{4}.$ Further the family of trajectories $\left\{\Gamma_h:H(u,y)=h,h\in(0,\frac{\beta u_{0}^4}{4})\right\}$ of the unperturbed system (\ref{kpr1.17}) forms the periodic annulus. For $n$ even integer, 
\begin{align*}
T_{n}(u)=(n+1)\frac{\int_{\delta(u)}^{u} t^n \,dt }{\int_{\delta(u)}^{u}\,dt }=(n+1)\frac{\int_{-u}^{u} t^n \,dt }{\int_{-u}^{u}\,dt }=u^{n}.
\end{align*}
Then $T'_{n}(u)=nu^{n-1}>0$ for $u\in(0,k)$. This implies that the function $G_{n}(h)$ is monotone for $h\in\left(0,\frac{\beta u_{0}^4}{4}\right)$ due to Theorem \ref{thmc}.
\end{proof}
\noindent \textbf{Note:} When $u_0=-k$,  the fixed point $(0,0)$ is the center of the system (\ref{kpr1.17}). Further all the closed orbits $\Gamma_h$ of the system (\ref{kpr1.17}) surrounding the point $(0,0)$  are symmetric and the direction of each $\Gamma_h$ is clockwise. Thus $ A_0(h)>0$ due to Green's Theorem. When $n$ is an odd positive integer, $A_n(h)=\oint_{\Gamma_h}\,u^ny\, du=2\int_{-u}^{u} u^n y \,du \equiv 0$. Then 
Abelian integral $A(h)$ of the system (\ref{kpr2.5}) is
\begin{align}
   &A(h):=\alpha_0 A_0(h)+\alpha_n A_n(h) \nonumber \\
   &~~~~~~~=\alpha_0 A_0(h).
\end{align}
Therefore $A(h)>0$ (or $<0$) if $\alpha_0>0$ (or $<0$). Thus $A(h)$ has no zeros and hence the system (\ref{kpr2.5}) has no limit cycles in view of Theorem \ref{lemma3.1}.\\ \\
\subsection{Proof of Theorem \ref{lemma3.01} }
In this subsection we prove Theorem \ref{lemma3.01} (i) and all other cases \rm(ii), \rm(iii), and \rm(iv) can be proved using similar arguments.

In section 2, for $0<u_0<k\leq 2u_{0},$ we observed that $\Gamma_h$ tends to the center $(u_0,0)$ as $h\to\frac{\beta u_{0}^3}{12}(u_0-2k).$ When $k<2u_0,$ $\Gamma_h$ tends to a homoclinic orbit connecting $(k,0)$ as $h\to \frac{\beta k^3}{12}(k-2u_0).$ When $k=2u_0,$ $\Gamma_h$ tends to the heteroclinic orbits connecting $(0,0)$ and $(k,0)$ as $h\to 0.$

Let us consider the case $k<2u_0.$ Now the function  $\frac{A_n(h)}{A_0(h)}$ is monotone for $h\in\left(\frac{\beta u_{0}^3}{12}(u_0-2k),\frac{\beta k^3}{12}(k-2u_0)\right):=I$ due to Proposition \ref{lemma3.2}. Further we have $A(h)=A_0(h)\left(\alpha_0+\alpha_n \frac{A_n(h)}{A_0(h)}\right).$ If $\frac{A_n(h)}{A_0(h)} \neq -\frac{\alpha_0}{\alpha_n}$ for $h\in I$, then $A(h)\neq 0$ for $h\in I.$ Therefore the system (\ref{kpr2.5}) has no limit cycles in view of Theorem \ref{lemma3.1}.

If there exist two real numbers $\alpha_0$ and $\alpha_n$ such that $\frac{A_n(\tilde{h})}{A_0(\tilde{h})}=-\frac{\alpha_0}{\alpha_n}$ for some $\tilde{h}\in I,$ then $A(h)$ has exactly one zero in the interval $I$ as the function $\frac{A_n(h)}{A_0(h)}$ is monotone. Theorem \ref{lemma3.1} implies that the perturbed system (\ref{kpr2.5})  has a unique limit cycle and hence  the equation  (\ref{eq3.01}) has a unique isolated periodic wave solution. 
\section{Interval of the ratio $\frac{\alpha_0}{\alpha_n}$ for the existence of limit cycle and numerical study}
Let us briefly explain the procedure to find the interval of $\frac{\alpha_0}{\alpha_n}$ for the existence of limit cycles for the perturbed system (\ref{kpr2.5}). First 
we identify all the parametric ranges for which there exist limit cycles. Fix one of the parametric ranges.
For that parametric range,  identify the periodic annulus. That is, find the interval $(h_1,h_2)$ such that the trajectories $\Gamma_h:H(u,y)=h, h\in(h_1,h_2)$ of the unperturbed system (\ref{kpr1.17}) form the periodic annulus. We have already shown that $\frac{A_n(h)}{A_0(h)}$ is monotone on the interval $(h_1,h_2)$.  For any positive integer $n>1,$ there exist two real numbers $K_{n,h_1}$ and $K_{n,h_2}$ such that $\frac{A_n(h)}{A_0(h)} \to K_{n,h_1}$ as $h \to h_{1}$, and $\frac{A_n(h)}{A_0(h)} \to K_{n,h_2}$ as $h \to h_{2}.$ Therefore  $\frac{A_n(h)}{A_0(h)}\in (K_{n,h_1},K_{n,h_2})$ or $\frac{A_n(h)}{A_0(h)}\in (K_{n,h_2},K_{n,h_1})$ for all $h\in (h_1,h_2)$ depending on whether $K_{n,h_1}<K_{n,h_2}$ or $K_{n,h_1}>K_{n,h_2}$. Assume that $\frac{A_n(h)}{A_0(h)}\in (K_{n,h_1},K_{n,h_2})$. If there exists a limit cycle, then $-\frac{\alpha_0}{\alpha_n}=\frac{A_n(h)}{A_0(h)}\in (K_{n,h_1},K_{n,h_2}).$ This, in turn,  implies that $\frac{\alpha_0}{\alpha_n}\in (-K_{n,h_2},-K_{n,h_1}).$ 

Let us explain the procedure with  one particular example. Consider  $n=2,$ $u_0=1,$ $\beta=1$, and $k=2.$ In this case the set of trajectories \{$\Gamma_h: H(u,y)=h, h\in(-\frac{1}{4},0)\}$ forms the periodic annulus. If $h \to -\frac{1}{4},$ then $\frac{A_2}{A_0} \to 1,$ and If $h \to 0,$ then $\frac{A_2}{A_0} \to \frac{6}{5}.$ Clearly, for the existence of limit cycle, we need $-\frac{\alpha_0}{\alpha_2}=\frac{A_2(h)}{A_0(h)}\in (1,\frac{6}{5}).$ This indicates that $\frac{\alpha_0}{\alpha_2}\in \left(-\frac{6}{5},-1\right).$ More generally,  we can get $\frac{\alpha_0}{\alpha_n} \in \left(-\frac{3(2^{n+1})}{(n+2)(n+3)},-1\right)$ for the existence of limit cycle for any positive integer $n.$ 

Let us consider $n=2,$ $\beta=1$, $u_0=1$, $k=2,$ $\epsilon=0.1$ and the initial values $(u,y)=(0.5,0)$. Clearly $H(0.5,0)=-\frac{9}{64}:=h^*$ and the corresponding trajectory of the unperturbed system (\ref{kpr1.17}) is $\Gamma_{h^*}: H(u,y)=\frac{y^2}{2}+\left (-\frac{u^4}{4}+u^3-u^2\right)=-\frac{9}{64}$. 
Note that $\frac{A_2(h^*)}{A_0(h^*)}=\frac{\oint_{\Gamma_{h^*}}\,u^2y\, du}{\oint_{\Gamma_{h^*}}\,y\, du}$. 
By direct computation, we get  $-\frac{\alpha_0}{\alpha_2}=\frac{A_2(h^*)}{A_0(h^*)}=1.06133$. Let us choose $\alpha_0=-1.06133$ and $\alpha_2=1$. This clearly gives $A(h^*)=0.$ \\ 

Our numerical study shows that:
\begin{itemize}
    \item [\rm(i)] The trajectory of the perturbed system (\ref{kpr2.5}) with the initial point $(0.7,0)$ spirals inward, converging to $(1,0)$ as $t \rightarrow \infty$  (see Figure \ref{comnbied_2}-$c$ and Figure \ref{comnbied_2c}), while the trajectory with the initial point $(0.3,0)$ spirals outward (see Figure \ref{comnbied_2}-$a$). This observation suggests the presence of an unstable limit cycle.
\item [\rm(ii)] For the perturbed system (\ref{kpr2.5}), the trajectory starting from the initial condition $(0.487,0)$ closely approximates the closed trajectory (see Figure \ref{comnbied_2}-$b$).
\end{itemize}

\begin{figure}[htbp] 
  \centering
\includegraphics[width=1.08\linewidth]{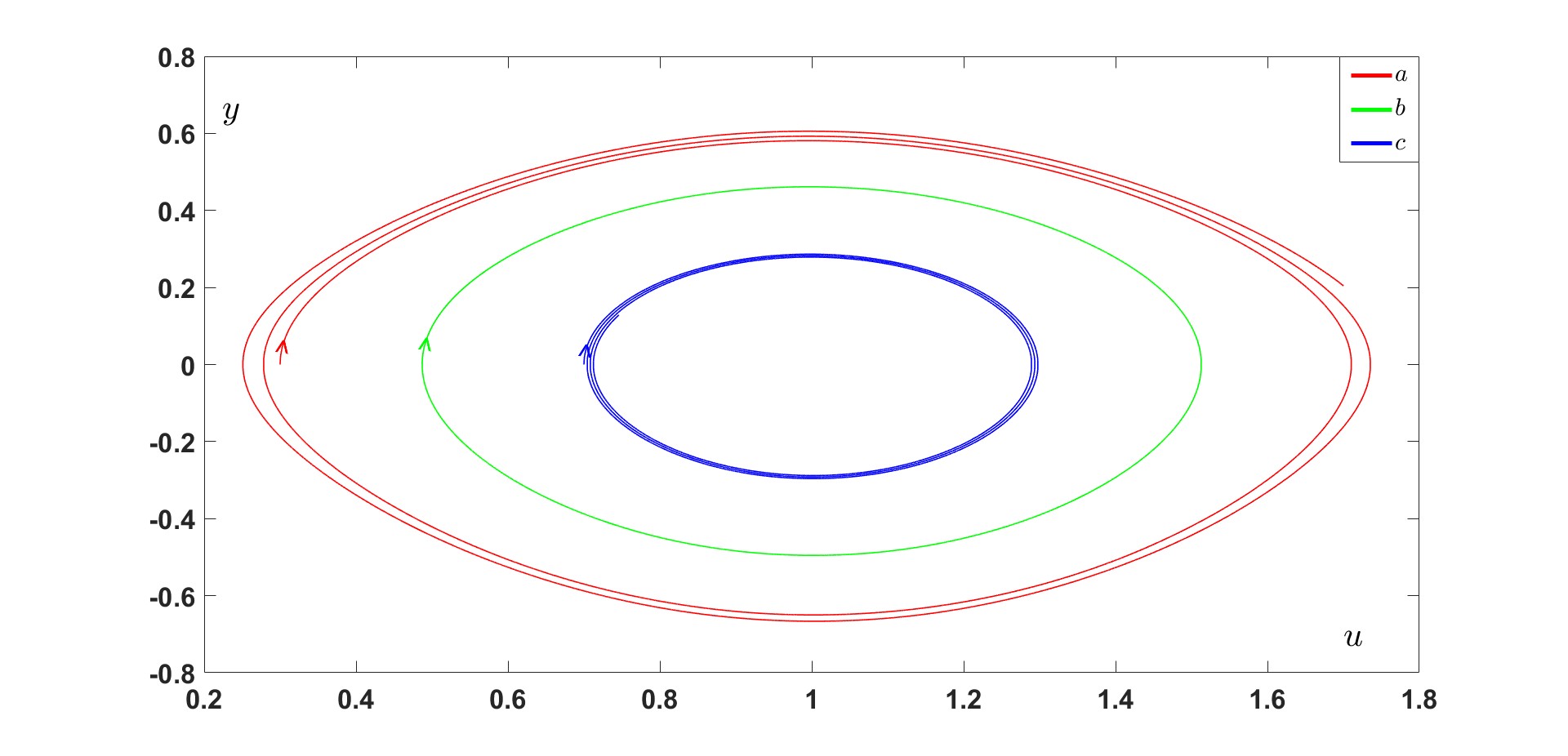}
  \caption{Trajectories of system (\ref{kpr2.5}) for $\epsilon=0.1,$ $\beta =1,$   $n=2$, $u_0=1$, $k=2$, $\alpha_0=-1.06133$, $\alpha_2=1$ with initial conditions $(a)$ $u=0.3$, $y=0,$ $(b)$ $u=0.487$, $y=0,$ $(c)$ $u=0.7$, $y=0.$} 
  \label{comnbied_2}
\end{figure}
\begin{figure}[htbp] 
  \centering
\includegraphics[width=1.08\linewidth]{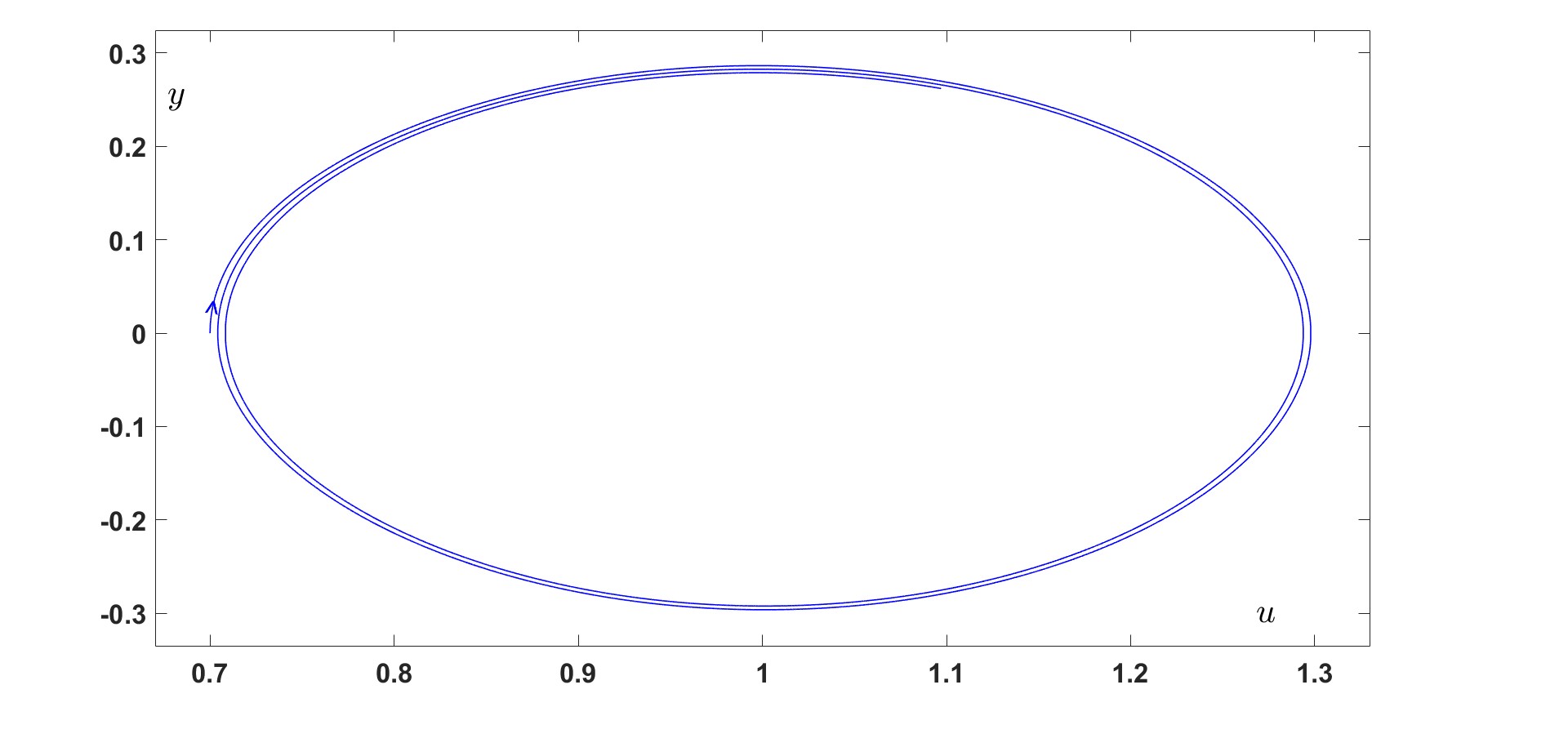}
  \caption{Zoomed version of Figure \ref{comnbied_2}(c)} 
  \label{comnbied_2c}
\end{figure}

\section{Conclusion}
In this study, we have proved existence of at most one isolated periodic wave for \rm(i) $0<u_0<k \leq 2u_0$, \rm(ii) $-k<u_0<0,$ and \rm(iii) $u_0<-k$ for all positive integer $n$. When $u_0=-k,$ for all even positive integers $n,$ there exists at most one isolated periodic wave. When $u_0=-k,$ for all odd positive integers $n,$ there exists no isolated periodic wave.

Here we used the monotonicity of the Abelian integral to prove the desired results. Also, we have obtained the interval of the ratio $\frac{\alpha_0}{\alpha_n}$ for the existence of limit cycle for a case. In the similar way, we can get the intervals for other cases also. Thus, for  specific $a_0$ and $a_n$, there exists a periodic traveling wave solution for the equation (\ref{eq3.01}).\\
\noindent\textbf{Author Contributions:} Both authors were involved in both the writing and reviewing of the manuscript.\\
\textbf{Data Availability:} No manuscript is associated with this document.
\section*{Declarations}
\textbf{Conflict of interest:} The authors state that they have no conflict of interest.

\end{document}